\def\sS{{\mathfrak S}}
      \def\dC{{\mathbb C}}
   \def\dN{{\mathbb N}}
   \def\dZ{{\mathbb Z}}
   \def\cQ{{\mathcal Q}}
\newcommand{\ar}{\alpha^R}
\newcommand{\al}{\alpha^L}
\def\dist{\operatorname{dist}}
\def\downbar#1{
\setbox10=\hbox{$#1$}
   \dimen10=\ht10 \advance\dimen10 by 2.5pt
   \ifdim \dimen10<15pt 
      \advance\dimen10 by -0.5pt
      \dimen11=\dimen10
      \advance\dimen10 by 2.5pt
      \lower \dimen11
   \else \lower \ht10 \fi
   \hbox {\hskip 1.5pt \vrule height \dimen10 depth \dp10}\relax}
 \def\upbar#1{
 \setbox10=\hbox{$#1$}
    \dimen10=\ht10 \advance\dimen10 by \dp10 \advance\dimen10 by 2.5pt
    \ifdim \dimen10<15pt 
       \advance\dimen10 by 2pt \fi
    \raise 2.5pt \hbox {\hskip -1.5pt \vrule height \dimen10}\relax}
\def\cfr#1#2{
 \downbar{#2} \hskip -1.5pt {\; #1 \; \over \thinspace \  #2}\upbar{#1}}
\newcommand{\be}{\begin{equation}}
\newcommand{\ee}{\end{equation}}
\newcommand{\ba}{\begin{eqnarray}}
\newcommand{\ea}{\end{eqnarray}}
\newcommand{\baa}{\begin{eqnarray*}}
\newcommand{\eaa}{\end{eqnarray*}}
\newcommand{\bb}{}
\newcommand{\re}[1]{(\ref{#1})}
\def\Im{\operatorname{Im}}
\def\Re{\operatorname{Re}}
\newtheorem{theorem}{Theorem}[section]
\newtheorem{proposition}[theorem]{Proposition}
\newtheorem{corollary}[theorem]{Corollary}
\newtheorem{lemma}[theorem]{Lemma}
\newtheorem{definition}[theorem]{Definition}
\theoremstyle{definition}
\newtheorem{example}[theorem]{Example}
\newtheorem{remark}[theorem]{Remark}
\numberwithin{equation}{section}
\begin{document}
\title[The linear pencil approach to rational interpolation]
{The linear pencil approach to rational interpolation}

\author{Bernhard Beckermann}
\address{Bernhard Beckermann\\ Laboratoire Painlev\'e UMR 8524 (ANO-EDP), UFR Math\'ematiques --
M3\\ UST Lille, F-59655 Villeneuve d'Ascq CEDEX, France}
\email{bbecker@math.univ-lille1.fr}

\author{Maxim  Derevyagin}
\address{Maxim  Derevyagin\\ Department of Nonlinear Analysis \\
Institute of Applied Mathematics and Mechanics\\ R.Luxemburg str.
74 \\ 83114 Donetsk, Ukraine, and 
Laboratoire Painlev\'e, UFR Math\'ematiques --
M3\\ UST Lille, F-59655 Villeneuve d'Ascq CEDEX, France\\}
\email{derevyagin.m@gmail.com}

\author{Alexei Zhedanov}

\address{Alexei Zhedanov\\ Institute for Physics and Engineering\\
R.Luxemburg str. 72 \\
83114 Donetsk, Ukraine}

\email{zhedanov@yahoo.com}


\begin{abstract}
     It is possible to generalize the
     fruitful interaction between (real or complex) Jacobi
     matrices, orthogonal polynomials and Pad\'e approximants at infinity
     by considering rational interpolants, (bi-)orthogonal
     rational functions and linear pencils $zB-A$ of two
     tridiagonal matrices $A,B$, following Spiridonov and Zhedanov.

     In the present paper, beside revisiting the underlying
     generalized Favard theorem, we suggest a new criterion for
     the resolvent set of this linear pencil in terms of the
     underlying  associated rational functions. This enables us to
     generalize several convergence results for Pad\'e approximants
     in terms of complex Jacobi matrices to the more general case
     of convergence of rational interpolants in terms of the
     linear pencil. We also study generalizations of the
     Darboux transformations and the link to
     biorthogonal rational functions. Finally, for a Markov function
     and for pairwise
     conjugate interpolation points tending to $\infty$, we
     compute explicitly the spectrum and the numerical range of
     the underlying linear pencil.
\end{abstract}

\subjclass[2000]{47B36, 40A15, 30E10, 47A57}

\keywords{Multipoint Pad\'e approximation, rational interpolation,
MP continued fractions, Jacobi matrix, linear pencils}

\maketitle

\section{Introduction}\label{sec_1}

The connection with Jacobi matrices has led to numerous
applications of spectral techniques for self-adjoint operators in
the theory of orthogonal polynomials on the real line and Pad\'e
approximation. In order to give an idea of these interactions
consider a Markov function of the form
\[
   \varphi(z)=\int_{a}^{b}\frac{d\mu(t)}{z-t},
\]
where $a,b$ are real numbers and $d\mu(t)$ is a probability
measure, that is, $\int_a^b d\mu(t)=1$. It is well
known~\cite{A},~\cite{wall} that one can expand such a Markov
function $\varphi$ into the following continued fraction
\begin{equation}\label{Jfraction}
\varphi(z)=\frac{1}{z-b_0-\displaystyle{\frac{a_0^2}{z-b_1-\displaystyle{\frac{a_1^2}{\ddots}}}}}=
\cfr{1}{z-b_0} -\cfr{a_0^2}{z-b_1} -\cfr{a_{1}^2}{z-b_2} -\cdots,
\end{equation}
where $b_j,a_j\in \mathbb R$, $a_j>0$. Continued fractions of the
form~\eqref{Jfraction} are called J-fractions~\cite{JT,wall}. To
the continued fraction~\eqref{Jfraction} one can associate a
Jacobi matrix $A$ acting in the space of square summable
sequences and its truncation $A_{[0:n]}$
\[
A=\left(%
\begin{array}{cccc}
  b_0 & a_0 &  &  \\
  a_0 & b_1 & a_1 &  \\
      & a_1 & b_2 & \ddots \\
      &     & \ddots & \ddots \\
\end{array}%
\right),\quad
A_{[0:n-1]}=\left(%
\begin{array}{cccc}
  b_0 & a_0    &  &  \\
  a_0 & b_1    & \ddots &  \\
      & \ddots & \ddots & a_{n-2} \\
      &        & a_{n-2}& b_{n-1}     \\
\end{array}%
\right).
\]
Then it is known that $\varphi(z)=\langle
(zI-A)^{-1}e_0,e_0\rangle$,  and the $n$th convergent of the above
continued fraction is given by
\[
   \frac{p_n(z)}{q_n(z)}=\langle (zI-A_{[0:n-1]})^{-1}e_0,e_0\rangle=
\cfr{1}{z-a_0}- \dots -\cfr{b_{n-2}^2}{z-a_{n-1}},
\]
where the column vector $e_0=(1,0,\dots)^\top$ is the first
canonical vector of suitable size, $q_n$ are orthogonal
polynomials with respect to $d\mu$, and $p_n$ are polynomials of
the second kind, see~\cite{A,NikSor,S98}. It is elementary fact of
the continued fraction theory that
\begin{equation}\label{dPA}
   \varphi(z)-\frac{p_n(z)}{q_n(z)}=O\left(\frac{1}{z^{2n+1}}\right)_{z\rightarrow\infty},
\end{equation}
see for instance~\cite{A,BGM,JT}. Relation~\eqref{dPA} means that
the rational function $p_n/q_n$ is the $n$th diagonal Pad\'e
approximant to $\varphi$ at infinity. Consequently, the locally
uniform convergence of diagonal Pad\'e approximants appears as the
strong resolvent convergence of the finite matrix approximations
$A_{[0:n]}$. For instance, one knows that $p_n/q_n\to \varphi$ in
capacity in the resolvent set of $A$ given by the complement of the
support of $\mu$, and locally uniformly outside the numerical
range of $A$ given by the convex hull of the spectrum of $A$,
see for instance~\cite{STO}. Besides, it should be mentioned here
that an operator approach for proving convergence of
Pad\'e approximants for rational perturbations of Markov functions
was proposed in~\cite{DD}, see also~\cite{D09}.

If $\varphi$ is no longer a Markov function but has distinct $n$th
diagonal Pad\'e approximants at infinity, we may still recover these
approximants as convergents of a continued fraction of type
\eqref{Jfraction}, but now in general $a_j,b_j \in \mathbb C$,
$a_j \neq 0$, see \cite{wall}, that is, $A$ becomes complex
symmetric, called a complex Jacobi matrix. There is no longer a
natural candidate for the spectrum of $A$, but it is still
possible to characterize the spectrum in terms of some asymptotic
behavior of the Pad\'e denominators $q_n(z)$ and the linearized
error functions $r_n(z)=q_n(z)\phi(z)-p_n(z)$
\cite{AptKV,BK97,B00}, see also \cite{BO03,DD,D09} for more
general banded matrices. Convergence outside the numerical range
was established in \cite{BK97}, and convergence in capacity in the
outer connected component of the resolvent set in \cite{B99}. We
refer the reader to \cite{B01} for some recent summary on complex
Jacobi matrices, including some open questions partially solved in
\cite{BC04}.

%

The goal of this paper is to generalize several of the above
results 
to the case of multipoint Pad\'e approximants.


\begin{definition}[\cite{BGM}]\label{PadeInt}
The $[n_1|n_2]$ multipoint Pad\'e approximant (or rational
interpolant) for a function $\varphi$ at the points
$\{z_k\}_{k=1}^{\infty}$ is defined as the ratio $p/q$ of two
polynomials $p$ and $q\neq 0$ of degree at most $n_1$ and $n_2$,
respectively, such that $\varphi q - p$ vanishes at
$z_1,z_2,...,z_{n_1+n_2+1}$ counting multiplicities.
\end{definition}

It is easy to see that the degree and interpolation conditions
lead to a homogeneous system of linear equations, and thus an
$[n_1|n_2]$ multipoint Pad\'e approximant exists. Also, one may
show uniqueness of the fraction $p/q$. However, since the
denominator may vanish at some of the interpolation points, it may
happen that the fraction $p/q$ does not interpolate
$\varphi$ at some point $z_k$, usually referred to as an
unattainable point.

Under some regularity conditions, the $[n-1|n]$ multipoint Pad\'e
approximants of $\varphi$ may be written as $n$th convergents of a
continued fraction of the form
\begin{equation}\label{R2}
      \cfr{1}{z-b_0}- \cfr{a_0^2(z-z_1)(z-z_2)}{z-b_1}-
      \cfr{a_1^2(z-z_3)(z-z_4)}{z-b_2}-
      \dots ,
\end{equation}
the odd part of a Thiele continued fraction \cite{BGM}. Continued
fractions of this type are referred to as $MP$-fractions in
\cite{HN1} and as $R_{II}$--fractions in \cite{IM95}. In
particular, the authors study in \cite[Theorem~~4.4]{HN1} and
\cite[Theorem~3.5]{IM95} some analog of Favard's theorem and the
link with orthogonal rational functions.
Spiridonov and one of the authors~\cite{SZ, Zhe_BRF}
showed that such continued fractions are related not to a single
Jacobi matrix but to a pencil $zB-A$ with tridiagonal matrices
$A,B$. Various links to bi-orthogonal rational functions have been
presented in \cite{Zhe_BRF} and~\cite{DZ09}. In particular,
in~\cite[Theorem~6.2]{DZ09} the authors present an
operator-theoretic proof for the Markov convergence theorem
multipoint Pad\'e approximants  \cite{GL} based on spectral
properties of the pencil $zB-A$.

The aim of this paper is to present further convergence results
for the continued fraction \eqref{R2}, both in the resolvent set
and outside the numerical range of the tridiagonal linear pencil
$zB-A$. To be more precise, denote by $\ell^2=\ell^2_{[0:\infty)}$
the Hilbert space of complex square summable sequences
$(x_0,x_1,\dots)^{\top}$ with the usual inner product
\[
\langle x,y\rangle =\sum_{j=0}^{\infty}x_j\overline{y}_j,\quad
x,y\in\ell^2.
\]
We will restrict our attention to the case of tridiagonal matrices
$A,B$ with bounded entries, in which case we may identify via
usual matrix product the matrices $A$ and $B$ with bounded
operators acting in $\ell^2$. Notice that many
algebraic relations remain true in the unbounded case as well.
However, already the simpler case of bounded pencils allows to
describe the main ideas of how to generalize results from the
classical theory of orthogonal polynomials to the theory of
biorthogonal rational functions as well as to the multipoint
Pad\'e approximation.

The remainder of the paper is organized as follows: we start from
a general bounded $MP$--fraction and introduce the associated
linear pencils together with the rational solutions of some
underlying three term recurrence relations in \S\ref{sec_2.1}. In
\S\ref{sec_2.2}, by generalizing previous work of Aptekarev,
Kaliaguine \& Van Assche \cite{AptKV} we show how the asymptotic
behavior of these rational solutions allows to decide whether the
linear pencil $zB-A$ is boundedly invertible. In particular, we
deduce in Corollary~\ref{pointwise} the pointwise convergence of
at least a subsequence of our multipoint Pad\'e approximants
towards what is called the $m$--function (or Weyl function) of the
linear pencil. Subsequently, we present in Theorem~\ref{favard} of
\S\ref{sec_2.3} an alternate proof for a Favard-type theorem
based on orthogonality properties of associated
rational functions, which yields in
Corollary~\ref{interpolants} a simple proof for the fact that the
convergents of our continued fractions are indeed multipoint Pad\'e
approximants of the $m$-function of our linear pencil.
In \S\ref{sec_3} we generalize the above-mentioned results of
\cite[Theorem~3.10]{BK97}, \cite[Theorem~3.1]{B99}, and
\cite[Theorem~4.4]{B99}, on the convergence of Pad\'e approximants
at infinity in terms of complex Jacobi matrices to the more
general case of multi-point Pad\'e approximants in terms of linear
pencils $zB-A$. The aim of \S\ref{sec_4} is to explore $LU$ and
$UL$ decompositions of our linear pencil, and the link to
biorthogonal rational functions. This naturally leads us to
consider generalizations of the Darboux
transformations of \cite{BM04}. Finally, we generalize in
\S\ref{sec_5} the findings described in the begining of this
section, namely, if we start with a Markov function and pairwise
conjugate interpolation points tending to infinity, then the
spectrum of our linear pencil is still the support of the
underlying measure, and the numerical range equals its convex hull.

\section{Continued fractions, linear pencils, and their
resolvents}

  In this section we show the links between continued
fractions in question and linear pencils. Moreover, we prove a
Favard type result for the corresponding recurrence relation.
\subsection{Linear pencils}\label{sec_2.1}
Let us consider a continued fraction of the form
\begin{equation}\label{CF}
    \cfr{1}{\beta_0(z)}-
    \cfr{\alpha_0^L(z)\alpha_0^R(z)}{\beta_1(z)}-
    \cfr{\alpha_1^L(z)\alpha_1^R(z)}{\beta_2(z)}-
    \dots
\end{equation}
where $\beta_n,\alpha_n^L,\alpha_n^R$ are polynomials of
degree at most $1$ and not identically zero. Next,
denote by $C_n(z)$ the $n$th convergent of this continued fraction
obtained by taking only the first $n$ terms in \eqref{CF}, then
the well-known theory of continued fractions tells us that
$C_n(z)=p_n(z)/q_n(z)$, where the polynomials $p_n$ of degree
$\leq n-1$ and $q_n$ of degree $\leq n$ are obtained as solutions
of the three-term recurrence relation
\begin{equation}\label{r2rec}
   y_{n+1} = \beta_n(z)y_n- \alpha^L_{n-1}(z)\alpha^R_{n-1}(z)y_{n-1}, \quad n=0,1,2,\dots
\end{equation}
by means of the initial conditions (setting $\alpha_{-1}^L=\alpha_{-1}^R=1$ for convenience)
\begin{equation}\label{r2initial}
    q_0(z)=1,\quad q_{-1}(z)=0,\quad p_{0}(z)=0,\quad p_{-1}(z)=-1.
\end{equation}
Using~\eqref{r2rec} and \eqref{r2initial} one easily verifies by
recurrence that
\begin{equation} \label{finite_det}
     q_n(z)=\det(z B_{[0:n-1]}-A_{[0:n-1]}) , \quad
     p_n(z)=\det(z B_{[1:n-1]}-A_{[1:n-1]}).
\end{equation}
By Cramer's rule, this implies the following formula for the
convergents \begin{equation} \label{cramer}
    C_n(z)=\frac{p_n(z)}{q_n(z)} = \langle
       (z B_{[0:n-1]}-A_{[0:n-1]})^{-1} e_0, e_0 \rangle.
\end{equation}
By induction, one also easily shows the Liouville-Ostrogradsky
formula (for the classical case, see~\cite[p.~9 formula~(1.15)]{A})
\begin{equation}\label{Ostrogr2}
   p_{n+1}(z)q_{n}(z) -
        p_{n}(z) q_{n+1}(z)= \prod_{k=0}^{n-1}\al_k(z)\ar_k(z), \quad
        n=0,1,2,\dots.
\end{equation}
For a complex number $\phi(z)$, the sequence defined by
\begin{equation} \label{second_kind}
    r_n(z):=\phi(z) q_n(z)-p_n(z),
\end{equation}
gives another solution of~\eqref{r2rec} with initial conditions
\begin{equation}\label{r3initial}
    r_0(z)=\phi(z),\quad r_{-1}(z)=1 .
\end{equation}
We will refer to $r_n$ as linearized error (or function of the
second kind) since, from the Pincherle Theorem~\cite[Theorem 5.7]{JT}, the continued
fraction \eqref{CF} has a limit $\phi(z)$ iff $r_n(z)$ is a
minimial solution of the recurrence relation \eqref{r2rec}.

It will be convenient to write the polynomials $\alpha^L_j$,
$\alpha^R_j$, and $\beta_j$ occurring in (\ref{CF}) in the form of the
tridiagonal infinite linear pencil
\begin{equation}\label{linpen}
   zB-A=\begin{pmatrix}
  \beta_0(z) & -\alpha^R_0(z) & 0 & 0 & \dots \\
  -\alpha^L_0(z) & \beta_1(z) & -\alpha^R_1(z) & 0 & \ddots \\
   0   & -\al_1(z) & \beta_2(z) & \alpha^R_2(z) & \ddots \\
   \vdots & \ddots & \ddots & \ddots & \ddots
\end{pmatrix}
\end{equation}
with the two tridiagonal infinite matrices
$A=(a_{i,j})_{i,j=0}^{\infty}$ and $B=(b_{i,j})_{i,j=0}^{\infty}$.
For a $J$-fraction, we obtain the linear pencil $z - A$ with a
tridiagonal matrix $A$~\cite{A} (see also~\cite{B01}). In the case
of $J$-fractions it is also known that we may write the eigenvalue
equation $A y = z y$ for some infinite column vector $y$ in terms
of normalized counterparts of the monic polynomials $q_n(z)$
(namely the corresponding orthonormal OP). Notice that the product
$Ay$ is defined for $y$ not necessarily an element of $\ell^2$,
since for each component there are only a finite number of
non-zero terms. For the linear pencil $zB-A$ we can analogously
write the similar eigenvalue equations
\begin{equation}\label{spectral}
     A q^R(z)= z B q^R(z) , \quad
     q^L(z) A = z q^L(z)B  , \quad
\end{equation}
with an infinite column vector $q^R(z)=(q_0^R(z), q_1^R(z), \dots
)^{\top}$ and an infinite row vector $q^L(z)=(q_0^L(z), q_1^L(z),
\dots )$. Here  $q_n^L(z)$ and $q_n^R(z)$ are rational functions
obtained from $q_n(z)$ by scaling with a product of linear
polynomials. Indeed, defining $q_n^R(z)$, $p_n^R(z)$, and
$r_n^R(z)$ via
\begin{equation} \label{def_R}
    q_n^R(z) = \frac{q_n(z)}{\displaystyle \prod_{k=0}^{n-1} \alpha^R_k(z)} , \quad
    p_n^R(z) = \frac{p_n(z)}{\displaystyle \prod_{k=0}^{n-1} \alpha^R_k(z)} , \quad
    r_n^R(z) = \frac{r_n(z)}{\displaystyle \prod_{k=0}^{n-1} \alpha^R_k(z)} ,
\end{equation}
leads us to three solutions of the recurrence relation
\begin{equation} \label{rec_R}
     \alpha_n^R(z) y_{n+1}^R -\beta_n(z) y_n^R +\alpha_{n-1}^L(z) y_{n-1}^R=0, \quad
        n=0,1,2,\dots.
\end{equation}
In the similar way, we see that
\begin{equation} \label{def_L}
    q_n^L(z) = \frac{q_n(z)}{\displaystyle \prod_{k=0}^{n-1} \alpha^L_k(z)} , \quad
    p_n^L(z) = \frac{p_n(z)}{\displaystyle \prod_{k=0}^{n-1} \alpha^L_k(z)} , \quad
    r_n^L(z) = \frac{r_n(z)}{\displaystyle \prod_{k=0}^{n-1} \alpha^L_k(z)} ,
\end{equation}
are three solutions of the recurrence relation
\begin{equation} \label{rec_L}
     \alpha_n^L(z) y_{n+1}^L - \beta_n(z) y_n^L+\alpha_{n-1}^R(z) y_{n-1}^L=0, \quad
        n=0,1,2,\dots .
\end{equation}
Now, it is immediate to see by taking into account the initial
conditions (\ref{r2initial}) that the identities (\ref{rec_R})
and (\ref{rec_L}) reduce to the formal spectral equations
(\ref{spectral}).

It should be also noted that we formally have
\begin{equation}\label{spectral1}
p^L(z)(zB-A)=-e_0^{\top},\quad (zB-A)p^R(z)=-e_0.
\end{equation}
\begin{remark}\label{scaling}
    There are many degrees of freedom in going from a continued
    fraction \eqref{CF} to a linear pencil $zB-A$. For instance,
    for the special case
    of $\deg \beta_n=1$ and $\deg \alpha_n^L=0=\deg \alpha_n^R$
    for all $n \geq 0$,
    the above approach leads a priori to diagonal
    $B$ and tridiagonal $A$ without any further symmetry
    properties. However, by applying an equivalence transformation
    to \eqref{CF} we can make the polynomials $\beta_n$ monic,
    implying that $B$ is the identity matrix. Moreover, we can
    choose $\alpha_n^L=\alpha_n^R$, i.e., $A$ becomes complex
    symmetric (also called a complex Jacobi matrix).
    In this case, $q_n^L=q_n^R$ are known to be the corresponding
    formal orthonormal polynomials, whereas $q_n$ is the
    associated monic counterpart. We will return to this scaling
    and normalization freedom in the last section. \qed
\end{remark}
\subsection{$m$-functions of linear pencils and the
resolvent}\label{sec_2.2}

In accordance with the Jacobi case of $B$ being the identity, we
define the resolvent set $\rho(A,B)$ of the linear pencil $zB-A$
to be the set of $z\in \mathbb C$ such that $zB-A$ has a bounded
inverse. 
The following simple example shows that $\rho(A,B)$ can be of
arbitrary shape.

\begin{example}\label{ex_resolvent}
  For $z_0,z_1,,\dots\in \mathbb C$, consider the diagonal pencil
  \[
       D_1 z -D_2 = \mbox{diag}\left(\frac{z-z_n}{1+|z_n|}\right)_{n=0,1,2,...},
  \]
  together with the tridiagonal pencil
  \[   Bz-A=U^*(D_1 z -D_2)U, \qquad
       U = \left(\begin{array}{cccccc}
       1 & -1/2 & 0 & \cdots & \cdots
       \\
       0 & 1 & -1/2 & 0 & \cdots
       \\
       0 & 0 & 1 & -1/2 & \ddots
       \\
       \vdots & \ddots & \ddots & \ddots & \ddots
            \end{array}\right)
  \]
  with bounded $A,B$. Then according to \eqref{finite_det},
  \eqref{linpen},
  \eqref{def_R}, and \eqref{def_L},
  \[
      \alpha_n^L(z)=\alpha_n^R(z)=\frac{1}{2}\frac{z-z_n}{1+|z_n|},
      \quad
      q_n^L(z)=q_n^R(z)=2^n. 
  \]
  Since $U$ is boundedly invertible, we find that
  $\rho(A,B)=\rho(D_2,D_1) = \mathbb C \setminus \Sigma$
  with $\Sigma$ the closure of $\{ z_0,z_1,\dots \}$, which could be
  any closed set of the complex plane. In particular, $\rho(A,B)$ can be empty.
  \qed
\end{example}

Notice that, if in addition
  $B$ is boundly invertible (which for instance in
  Example~\ref{ex_resolvent} is not necessarily true) then
  $\rho(A,B)=\rho(B^{-1}A)=\rho(AB^{-1})$.
However, in general we loose for the last two operators the link
with tridiagonal matrices and three-term recurrencies, and thus we
prefer to argue in terms of pencils.


Aptekarev et al.\ \cite[Theorem~1]{AptKV} showed that a bounded
tridiagonal matrix has a bounded inverse if and only if the above
solutions of the recurrencies \eqref{rec_R} and \eqref{rec_L} have
a particular asymptotic behavior. In our setting, their findings
(see also the slight improvement given in
\cite[Theorem~2.1]{BK97}) read as follows.

\begin{theorem}[\cite{AptKV}] \label{thChar}
   Suppose that $A,B$ are bounded, and consider for $z\in \mathbb C$
   the matrix $R(z)$ with entries
   \[
          R(z)_{j,k} =
          \left\{\begin{array}{cc}
              r_j^R(z) q_k^L(z) = (q_j^R(z) \phi(z)
              -p_{j}^R(z))q_k^L(z) & \mbox{if $j\geq k$,}
              \\
              q_j^R(z) r_k^L(z) = q_j^R(z) (q_k^L(z) \phi(z)
              -p_{k}^L(z))   & \mbox{if $j\leq k$.}
           \end{array}\right.
   \]
   Then $z\in \rho(A,B)$ if and only if there
   exists $\phi(z)\in \mathbb C$ and constants $\gamma(z)>0$,
   $\delta(z)\in (0,1)$
   such that \begin{equation} \label{geometric}
       |R(z)_{j,k}|\leq \gamma(z) \delta(z)^{|j-k|}, \quad
       j,k=0,1,....
   \end{equation}
   In this case, $R(z)_{j,k}=\langle (zB-A)^{-1} e_k,e_j\rangle$,
   in particular,
   $\phi(z)$ is uniquely given by
   \[
            \phi(z)=R(z)_{0,0} = \langle (zB-A)^{-1}
            e_0,e_0\rangle.
   \]
\end{theorem}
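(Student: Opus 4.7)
My plan is to first establish, as a purely algebraic identity, that $(zB-A)R(z)=R(z)(zB-A)=I$ as formal matrix products, and then use this common ingredient to close both directions --- sufficiency by a Schur test and necessity by a Combes--Thomas bound followed by a kernel/uniqueness step. The formal identity is an entry-wise computation: since $zB-A$ is tridiagonal, each entry of $(zB-A)R(z)$ is a three-term sum. The off-diagonal entries vanish because $q_n^R,r_n^R$ both solve \eqref{rec_R} and $q_n^L,r_n^L$ both solve \eqref{rec_L}, while the diagonal entry $((zB-A)R(z))_{k,k}$ collapses via the recurrences to $\alpha_k^R(z)(q_{k+1}^R(z)r_k^L(z)-r_{k+1}^R(z)q_k^L(z))$; rewriting this Wronskian-like quantity through \eqref{def_R}, \eqref{def_L}, \eqref{second_kind} and substituting the Liouville--Ostrogradsky formula \eqref{Ostrogr2} evaluates it to~$1$. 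The symmetric left-sided calculation yields $R(z)(zB-A)=I$.

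For the backward direction I assume the geometric bound. By the Schur test $R(z)$ extends to a bounded operator $T$ on $\ell^2$ with $\|T\|\leq 2\gamma(z)/(1-\delta(z))$, and combining this with the formal identity gives $T=(zB-A)^{-1}$, so $z\in\rho(A,B)$.

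For the forward direction I set $G(z)=(zB-A)^{-1}$ and $\phi(z):=\langle G(z)e_0,e_0\rangle$. A Combes--Thomas argument yields the geometric decay of $G$: conjugating $zB-A$ by the diagonal weight $D_\eta=\mathrm{diag}(e^{\eta j})$ changes it by an operator of norm $O(\eta)$ on $\ell^2$ (tridiagonality is essential here), so for small $\eta(z)>0$ the conjugate remains boundedly invertible, which yields $|G_{j,k}|\leq C(z)e^{-\eta(z)|j-k|}$. It then remains to identify $G=R(z)$ with this $\phi(z)$. The difference $H:=G-R(z)$ satisfies $(zB-A)H=H(zB-A)=0$ entry-wise; the formal kernel of $(zB-A)$ acting on sequences $(y_j)_{j\geq 0}$ is one-dimensional (the row $j=0$ of $zB-A$ eliminates one degree of freedom of the second-order recurrence) and spanned by $q^R(z)$, and on the left analogously by $q^L(z)$, so each column of $H$ is a scalar multiple of $q^R(z)$ and each row a scalar multiple of $q^L(z)$. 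The normalization $H_{0,0}=G_{0,0}-\phi(z)q_0^L(z)q_0^R(z)=0$ forces all these scalars to vanish, hence $H\equiv 0$. The geometric decay of $G$ thus transfers to $R(z)$, and the uniqueness assertion follows at once from $R(z)_{0,0}=\phi(z)$.

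The main obstacle I foresee is this matching $G=R(z)$ in the forward direction: because $R(z)$ is not a priori bounded, one has to distinguish carefully between formal matrix products with tridiagonal factors (which are always well defined entry-wise) and operator products on $\ell^2$, and to carry out the kernel analysis in the space of all sequences rather than in $\ell^2$. Invoking the Combes--Thomas exponential decay of $G$ only at the end, after the algebraic matching has been completed, is the subtle point.
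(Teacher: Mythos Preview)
Your proposal is correct and follows the same skeleton as the paper: both rest on Lemma~\ref{GreenPr} (the formal identity $(zB-A)R(z)=R(z)(zB-A)=I$), then in the backward direction infer boundedness of $R(z)$ from the geometric decay (you via a Schur test, the paper by citing \cite{AptKV}), and in the forward direction invoke a standard exponential off-diagonal decay estimate for bounded inverses of banded operators (you via a Combes--Thomas conjugation, the paper via the Demko--Moss--Smith bound \cite{demo}). The only substantive difference is the identification $G:=(zB-A)^{-1}=R(z)$ in the forward direction. The paper simply asserts, ``according to Lemma~\ref{GreenPr}, $R(z)$ is indeed the matrix representation of $(zB-A)^{-1}$'', and then applies the decay estimate to it; but Lemma~\ref{GreenPr} holds for \emph{every} $\phi(z)$, so the choice $\phi(z)=G_{0,0}$ and the verification that $G=R(z)$ for this choice are left implicit (presumably deferred to \cite{AptKV}). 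Your kernel argument---columns of $H=G-R(z)$ lie in the one-dimensional formal right null-space $\operatorname{span}\{q^R\}$, rows in $\operatorname{span}\{q^L\}$, and $H_{0,0}=0$ forces $H\equiv 0$---makes this step self-contained, and is precisely the subtlety you rightly single out as the main obstacle.
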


  For bounded complex Jacobi matrices ($B=I$ and $q_n^L(z)=q_n^R(z)$),
  it was shown in \cite{B00} that
  $z\in \rho(A,B)$ can be characterized only in terms of the
  asymptotic behavior of the denominators $q_n^L(z)=q_n^R(z)$.
  As we see from Example~\ref{ex_resolvent}, this is no longer
  true for bounded tridiagonal pencils.

For the sake of completeness, we will give below the ideas of the
proof of Theorem~\ref{thChar}. Let us first discuss some immediate
consequences.

\begin{remark}
   For the particular case of Jacobi matrices (that is $B=I$),
   the above formulas for the entries of the resolvent,
   also referred to as Green's functions, have been known for a
   long time, see for instance the recent
   book~\cite[Section~4.4]{simon}. Our linear
   pencil formalism also includes so-called CMV matrices occurring
   in the study of orthogonal polynomials on the unit circle,
   see~\cite[Section~4.2]{simon}, here $A,B$ are not only
   tridiagonal but in addition block-diagonal, with unitary
   blocks. Again, the formulas for the Green's functions given in
   \cite{simon} are a special case of Theorem~\ref{thChar}.
   We also refer the reader to \cite{BKLO} for recent findings for the
   special case
   of multipoint Schur functions : here the roots of $\alpha_n^L$
   and $\alpha_n^R$ are related through reflexion across the unit circle.
   \qed
\end{remark}

A basic object in Theorem~\ref{thChar} and in the rest of the
paper is following.

\begin{definition}
The function
\begin{equation}\label{Weyl1}
    m(z)=\langle (zB-A)^{-1}e_0,e_0\rangle ,\quad z\in\rho(A,B)
\end{equation}
will be called the $m$-function (or Weyl function) of the linear
pencil $zB-A$.
\end{definition}

Comparing with \eqref{cramer} we are left with the central
question whether the $m$-function $p_n(z)/q_n(z)=\langle
(zB_{[0:n-1]}-A_{[0:n-1]})^{-1} e_0,e_0\rangle$ of the finite
pencil $zB_{[0:n-1]}-A_{[0:n-1]}$ converges for $n \to \infty$ to
the $m$-function of the infinte pencil $zB-A$.

We learn from Theorem~\ref{thChar} that the linearized errors
$r_n^L(z)=R(z)_{0,n}=q_n^L(z) m(z)-p_n^L(z)$ and
$r_n^R(z)=R(z)_{n,0}=q_n^R(z) m(z)-p_n^R(z)$ tend to zero with a
geometric rate \begin{equation} \label{residual}
    \limsup_{n \to \infty} | r_n^L(z) |^{1/n} < 1 , \quad
    \limsup_{n \to \infty} | r_n^R(z) |^{1/n} < 1 , \quad
    z \in \rho(A,B) .
\end{equation}
Following exactly the lines of Aptekarev et al.\ \cite[Theorem~2
and Corollary~3]{AptKV} we obtain the following result on
point-wise convergence of a subequence.

\begin{corollary}\label{pointwise}
   We have for $z\in \rho(A,B)$
   \[
    \limsup_{n \to \infty} | q_n^L(z) |^{1/n} > 1 , \,
    \limsup_{n \to \infty} | q_n^R(z) |^{1/n} > 1 , \,
    \liminf_{n \to \infty} \left| m(z)-\frac{p_n(z)}{q_n(z)} \right|^{1/n} < 1 .
   \]
\end{corollary}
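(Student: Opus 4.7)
The plan is to derive a ``cross'' identity relating $r_n^R(z)$ to $q_n^L(z)$ (and symmetrically $r_n^L(z)$ to $q_n^R(z)$) by suitably rescaling the Liouville--Ostrogradsky formula \eqref{Ostrogr2}. Starting from $p_{n+1}(z) q_n(z) - p_n(z) q_{n+1}(z) = \prod_{k=0}^{n-1} \alpha_k^L(z) \alpha_k^R(z)$ and dividing both sides by $\prod_{k=0}^{n-1} \alpha_k^L(z) \alpha_k^R(z)$, I absorb the resulting factors into the scalings \eqref{def_R} and \eqref{def_L} to obtain
\[
\alpha_n^R(z) p_{n+1}^R(z) q_n^L(z) - \alpha_n^L(z) p_n^R(z) q_{n+1}^L(z) = 1.
\]
A direct computation from the same scalings reveals the vanishing Wronskian
\[
\alpha_n^R(z) q_{n+1}^R(z) q_n^L(z) - \alpha_n^L(z) q_n^R(z) q_{n+1}^L(z) = 0,
\]
since both sides equal $q_n(z) q_{n+1}(z)/\prod_{k=0}^{n-1}\alpha_k^L(z)\alpha_k^R(z)$. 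Substituting $p_n^R(z) = q_n^R(z) m(z) - r_n^R(z)$ (valid on $\rho(A,B)$ by Theorem~\ref{thChar}, noting $\phi(z) = m(z)$), the $m(z)$-contributions cancel, leaving
\[
\alpha_n^L(z) r_n^R(z) q_{n+1}^L(z) - \alpha_n^R(z) r_{n+1}^R(z) q_n^L(z) = 1,
\]
together with its $L \leftrightarrow R$ mirror $\alpha_n^R(z) r_n^L(z) q_{n+1}^R(z) - \alpha_n^L(z) r_{n+1}^L(z) q_n^R(z) = 1$.

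Next, I exploit Theorem~\ref{thChar} at $k=0$ (where $q_0^L = q_0^R = 1$) to obtain $|r_n^R(z)|, |r_n^L(z)| \leq \gamma(z) \delta(z)^n$ with $\delta(z) \in (0,1)$, together with the uniform bound $|\alpha_n^L(z)|, |\alpha_n^R(z)| \leq M(z)$ that follows from boundedness of $A$ and $B$. Taking moduli in the cross identity gives
\[
1 \leq M(z) \gamma(z) \bigl( \delta(z)^n |q_{n+1}^L(z)| + \delta(z)^{n+1} |q_n^L(z)| \bigr),
\]
so for each $n$ at least one of $|q_n^L(z)|, |q_{n+1}^L(z)|$ must be of order $\delta(z)^{-n}$, which forces $\limsup_n |q_n^L(z)|^{1/n} \geq 1/\delta(z) > 1$. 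Applying the same argument to the mirrored identity yields $\limsup_n |q_n^R(z)|^{1/n} > 1$.

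Finally, the scaling \eqref{def_L} gives $m(z) - p_n(z)/q_n(z) = r_n^L(z)/q_n^L(z)$. Choosing a subsequence $n_k$ along which $|q_{n_k}^L(z)|^{1/n_k} \to c := \limsup_n |q_n^L(z)|^{1/n} > 1$, and using $|r_{n_k}^L(z)|^{1/n_k} \to$ some value $\leq \delta(z)$, one gets
\[
\limsup_k \left| m(z) - \frac{p_{n_k}(z)}{q_{n_k}(z)} \right|^{1/n_k} \leq \frac{\delta(z)}{c} < 1,
\]
and since $\liminf_n a_n$ is bounded above by any subsequential limit point, the third claim follows. The main obstacle is identifying the correct rescaling of the Liouville--Ostrogradsky identity so that the $m(z)$-terms cancel via a vanishing Wronskian; once this cross identity is established, the geometric decay of $r_n^R(z)$ supplied by the resolvent criterion propagates automatically into the geometric growth of $q_n^L(z)$, and the residual estimate follows by division.
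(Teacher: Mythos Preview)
Your proof is correct and follows essentially the same approach as the paper: both derive the cross identity $\alpha_n^L(z)q_{n+1}^L(z)r_n^R(z)-\alpha_n^R(z)q_n^L(z)r_{n+1}^R(z)=1$ (which the paper records as \eqref{Ostrogr1}) from the Liouville--Ostrogradsky formula \eqref{Ostrogr2}, combine it with the geometric decay of $r_n^{L/R}$ from Theorem~\ref{thChar} and the boundedness of the $\alpha_n^{L/R}$ to force geometric growth of $q_n^{L/R}$, and then read off the third claim from $m(z)-p_n(z)/q_n(z)=r_n^L(z)/q_n^L(z)$. The only cosmetic difference is that the paper phrases the growth step via Cauchy--Schwarz on $[|q_n^L|^2+|q_{n+1}^L|^2]^{1/2}$ whereas you use the triangle inequality, and you spell out the subsequence argument for the $\liminf$ more explicitly than the paper does.
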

\begin{proof}
   Using \eqref{def_R} and \eqref{def_L},
   the Liouville-Ostrogradsky formula
   \eqref{Ostrogr2} takes the following form
   \begin{equation} \label{Ostrogr1}
     \alpha_{n}^L(z)q_{n+1}^L(z)r_{n}^R(z)-
     \alpha_n^R(z)q_{n}^L(z)r_{n+1}^R(z)=1.
   \end{equation}
   Since $\sup_n \max\{ |\alpha_{n}^L(z)|,|\alpha_{n}^R(z)| \}
   <\infty$ by assumption on $A,B$, relation~\eqref{residual}
   together with the Cauchy-Schwarz inequality implies that
   \[
          \liminf_{n\to \infty}
          [|q_{n}^L(z)|^2+|q_{n+1}^L(z)|^2]^{1/(2n)} > 1,
   \]
   implying our first claim. The second is established using
   similar techniques, and the third by writing
   $m(z)-p_n(z)/q_n(z)=r_n^L(z)/q_n^L(z)$.
\end{proof}

By having a closer look at the proof, we see that we have
pointwise convergence for a quite dense subsequence, namely for
$p_{n+\epsilon_n}/q_{n+\epsilon_n}$ for $n \geq 0$ with suitable
$\epsilon_n \in \{ 0,1 \}$. We will show in
Theorem~\ref{neighborhoods} below that this point-wise convergence
result can be replaced by a uniform convergence result in
neighborhoods of an element of $\rho(A,B)$.

In the remainder of this subsection we present the main lines of
the proof of Theorem~\ref{thChar}. The first step consists in
showing that our infinite matrix $R(z)$ is a formal left and right
inverse for $zB-A$, compare with~\cite[Section~60 and
Section~61]{wall} for the case of complex Jacobi matrices.

\begin{lemma}\label{GreenPr}
    For any value of $\phi(z)$, the formal matrix products
    $R(z)(zB-A)$ and $(zB-A)R(z)$ give the identity matrix.
\end{lemma}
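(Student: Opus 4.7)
The approach is to compute the formal matrix product entry by entry, exploiting the tridiagonal structure of $zB-A$ together with the three-term recurrences \eqref{rec_R}, \eqref{rec_L} that the building blocks $q^{R/L}$, $r^{R/L}$ satisfy. The first step is to establish four auxiliary formal identities:
\begin{equation*}
   (zB-A) q^R(z) = 0, \quad (zB-A) r^R(z) = e_0, \quad
   q^L(z)(zB-A) = 0^\top, \quad r^L(z)(zB-A) = e_0^\top .
\end{equation*}
The first and third are nothing but \eqref{spectral}, verified directly from \eqref{rec_R}--\eqref{rec_L} together with the initial values $q_0=1$, $q_{-1}=0$ and the convention $\alpha_{-1}^L=\alpha_{-1}^R=1$. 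The second and third follow from \eqref{second_kind}--\eqref{r3initial} after dividing through the appropriate products of $\alpha_k^R$'s or $\alpha_k^L$'s; equivalently, they are a restatement of \eqref{spectral1} after scaling.

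Next, I fix indices $i,j \geq 0$ and compute
\[
   \bigl[(zB-A)R(z)\bigr]_{i,j} = -\alpha_{i-1}^L(z)\, R_{i-1,j} + \beta_i(z)\, R_{i,j} - \alpha_i^R(z)\, R_{i+1,j},
\]
splitting into three cases. If $i<j$, all three relevant entries lie in the upper-triangular regime $k \leq j$, so $R_{k,j}=q_k^R(z)\, r_j^L(z)$; factoring out $r_j^L(z)$ turns the sum into $r_j^L(z)\, [(zB-A)q^R(z)]_i = 0$. If $i>j$, symmetrically every $R_{k,j}$ belongs to the lower-triangular regime, factoring out $q_j^L(z)$ yields $q_j^L(z)\, [(zB-A)r^R(z)]_i$, which vanishes because the $i$-th component of $e_0$ is zero when $i\geq 1$. (Here I use the consistency $R_{j,j}=r_j^R q_j^L = q_j^R r_j^L$, which is immediate from \eqref{def_R} and \eqref{def_L}.)

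The main obstacle is the diagonal case $i=j$. Here $R_{i-1,i} = q_{i-1}^R r_i^L$ and $R_{i+1,i} = r_{i+1}^R q_i^L$ come from different branches of the definition, and I must use both representations of $R_{i,i}$ to match things up. Writing $R_{i,i} = q_i^R r_i^L$ in the first two summands, the recurrence \eqref{rec_R} collapses $\beta_i q_i^R - \alpha_{i-1}^L q_{i-1}^R$ into $\alpha_i^R q_{i+1}^R$, leaving
\[
    \bigl[(zB-A)R(z)\bigr]_{i,i} = \alpha_i^R(z)\bigl( q_{i+1}^R(z) r_i^L(z) - r_{i+1}^R(z) q_i^L(z)\bigr).
\]
A short computation from \eqref{def_R}, \eqref{def_L} and the Liouville-Ostrogradsky identity \eqref{Ostrogr2} shows that the bracket equals $1/\alpha_i^R(z)$, so the diagonal entry is $1$ as required. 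The equation $R(z)(zB-A) = I$ is treated identically, exchanging the roles of the left and right objects and applying \eqref{rec_L} in place of \eqref{rec_R}.
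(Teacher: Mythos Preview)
Your proof is correct and follows essentially the same route as the paper's: both verify the identity directly from the three-term recurrences \eqref{rec_R}, \eqref{rec_L} together with the Liouville--Ostrogradsky relation \eqref{Ostrogr2}, the only difference being that the paper organizes the computation row-by-row via truncated vectors $q^L_{[0:j]}$, $p^L_{[0:j]}$ (obtaining the whole $j$-th row identity at once), whereas you proceed entry-by-entry with a case split on $i$ versus $j$. One small slip: where you write ``The second and third follow from \eqref{second_kind}--\eqref{r3initial}'' you mean the second and \emph{fourth}.
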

\begin{proof}
    We will concentrate on the first identity, the second is
    following along the same lines.
    Write shorter
    \[
        q^L_{[0:j]}=(q_{0}^L,\dots,q_{j}^L,0,0,\dots)
    \]
    and similarly $p^L_{[0:j]}$ and $r^L_{[0:j]}$ for
    the row vectors built with the
    other solutions of the recurrence \eqref{rec_L}.
    Then
   \begin{equation}\label{trunc_eqPR}
      p^L_{[0:j]}(z)(zB-A)=-e_0^{\top}+\al_j(z)p_{j+1}^L(z)e_j^{\top}+\ar_jp_j^L(z)e_{j+1}^{\top},
   \end{equation}
   \begin{equation}\label{trunc_eqQR}
      q^L_{[0:j]}(z)(zB-A)=\al_j(z)q_{j+1}^L(z)e_j^{\top}+\ar_jq_j^L(z)e_{j+1}^{\top}.
   \end{equation}
In view of~\eqref{Ostrogr2},~\eqref{def_R}, and~\eqref{def_L}, one
obtains
\[
    (q_j^R(z)p_{[0:j]}^L(z)-p_j^R(z)q_{[0:j]}^L(z))(zB-A)=
     e_j^{\top}-q_j^L(z)e_0^{\top}.
\]
In addition, from \eqref{spectral} and~\eqref{spectral1} we have
that
\[
    (q^L(z) \phi(z) - p^L(z)) (zB-A) = e_0^\top.
\]
A combination of the last two equations shows that, for all $j
\geq 0$,
\[
   ( R(z)_{j,0},R(z)_{j,1},R(z)_{j,2},\dots) (zB-A) = e_j^\top,
\]
as claimed above.
\end{proof}

\begin{proof}[Proof of Theorem~\ref{thChar}.\,]
Let $z\in \rho(A,B)$. Then, according to Lemma~\ref{GreenPr},
$R(z)$ is indeed the matrix representation of the bounded operator
$(zB-A)^{-1}$. We get the decay rate~\eqref{geometric} of the
entries of $R(z)$ from~\cite[Theorem~2.4]{demo} using the fact
that $R(z)$ is the inverse of a bounded tridiagonal matrix.

Suppose now that $\phi(z)\in \mathbb C$ is such
that~\eqref{geometric} is satisfied.
   Then, using the same arguments as in~\cite{AptKV} we have
   that $R(z)$ represents a bounded operator in
   $\ell^2$,
   which by Lemma~\ref{GreenPr}
   is a left and right inverse of $zB-A$. Hence $z\in \rho(A,B)$.
\end{proof}

\begin{remark}\label{rate}
   The essential tool in the proof of Theorem~\ref{thChar} was the
   decay rate \eqref{geometric} of entries of the inverse of a
   bounded tridiagonal matrix. In order to specify the rate of
   convergence, for instance in Corollary~\ref{pointwise},
   it is interesting to quote from~\cite[Theorem~2.4]{demo} possible
   values of $\gamma(z),\delta(z)$ in terms of the condition number
   \[
        \kappa(z)=\| zB-A \| \, \| (zB-A)^{-1}\| \geq 1
   \] being obviously
   continuous in $\rho(A,B)$, compare with
   \cite[Lemma~3.3]{BK97},
   \[
        \delta(z)=\sqrt{\frac{\kappa(z)-1}{\kappa(z)+1}}, \quad
        \gamma(z)=\frac{3\,\| (zB-A)^{-1} \|}{\delta(z)^2}
        \max \Bigl\{ \kappa(z), \frac{(1+\kappa(z))^2}{2\kappa(z)}
        \Bigr\}.
   \]
   \qed
\end{remark}

\subsection{Biorthogonal rational functions and a Favard theorem}\label{sec_2.3}

Our explicit formulas for the entries of the resolvent allow for a
simple proof of biorthogonality for the denominators $q_j^R$ and
$q_k^L$, and in addition an explicit formula for the linear
functional of orthogonality discussed by Ismail and Masson
\cite{IM95}. This generalizes the classical case of $B=I$ and a
selfadjoint Jacobi matrix $A$ \cite{A} where it is well-known
that, for $j\neq k$,
\[
        \langle q_j(A)e_0 , q_k(A) e_0 \rangle = 0.
\]
As a consequence, we obtain a simple proof of the fact that the
$n$th convergent of \eqref{CF} is indeed an $[n-1|n]$th multipoint
Pad\'e approximant of the $m$-function.

In this subsection we denote for $k=0,1,2,...$ by $z_{2k+1}$ (and
by $z_{2k+2}$) the root of $\alpha^L_k$ (and of $\alpha^R_k$,
respectively), where we put $z_{2k+1}=\infty$ (and
$z_{2k+2}=\infty$) if $\alpha_k^L$ (and $\alpha_k^R$) is of degree
$0$. Similar to \cite{IM95} we suppose for convenience that
$z_1,z_2,...\in \rho(A,B)$. More precisely, we suppose that there
exists a domain $\Gamma_{ext}$ with compact boundary forming a
Jordan curve such that
\begin{equation}\label{assumption2}
  z_1,z_2,... \in \Gamma_{ext} \subset \mbox{Clos}(\Gamma_{ext})
  \subset\rho(A,B),
\end{equation}
where $\mbox{Clos}(\cdot)$ denotes the closure.
The case $z_k=\infty$ needs special care: notice that $\infty\in
\rho(A,B)$ if and only if $B$ has a bounded inverse, in which case
we will also suppose that $\infty\in \Gamma_{ext}$. The boundary
$\Gamma$ of $\Gamma_{ext}$ is orientated such that $\Gamma_{ext}$
is on the right of $\Gamma$, implying that
 \[
     g(z) = \frac{1}{2\pi i} \int_\Gamma \frac{g(\zeta)}{z-\zeta}
     d\zeta
 \]
for $z\in \Gamma_{ext}$ and any function $g$ being analytic in
$\rho(A,B)$ and, if $\infty\in \rho(A,B)$, vanishing at infinity.

We start by establishing an integral formula for the entries of
the resolvent.

\begin{lemma}\label{integral}
   Under the assumption \eqref{assumption2}, we have for $z\in \Gamma_{ext}$ and $j,k=0,1,2,...$
   \[
          R(z)_{j,k} = \langle (zB-A)^{-1} e_k,e_j\rangle
          = \frac{1}{2\pi i} \int_\Gamma q_j^R(\zeta)q_k^L(\zeta)
          \frac{m(\zeta)}{z-\zeta} \, d\zeta.
   \]
\end{lemma}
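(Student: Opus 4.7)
The plan is to apply the scalar Cauchy integral formula to the analytic function $\zeta\mapsto R(\zeta)_{j,k}$ and then to reshape the integrand via the explicit description of $R(\zeta)_{j,k}$ given in Theorem~\ref{thChar}. First I would note that $R(\cdot)_{j,k}=\langle(\cdot B-A)^{-1}e_k,e_j\rangle$ is analytic on $\rho(A,B)\supset\mbox{Clos}(\Gamma_{ext})$ by \eqref{assumption2}; moreover, when $\infty\in\Gamma_{ext}$ (which by the paper's convention is exactly the case when $B$ is boundedly invertible) one has $\|(\zeta B-A)^{-1}\|=O(1/|\zeta|)$ at infinity, so $R(\zeta)_{j,k}\to 0$. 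Applying the Cauchy formula recalled just above the lemma to $g=R(\cdot)_{j,k}$ yields, for $z\in\Gamma_{ext}$,
\[
R(z)_{j,k}=\frac{1}{2\pi i}\int_\Gamma\frac{R(\zeta)_{j,k}}{z-\zeta}\,d\zeta.
\]

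Next, using the identity from Theorem~\ref{thChar} on $\Gamma\subset\rho(A,B)$,
\[
R(\zeta)_{j,k}=q_j^R(\zeta)q_k^L(\zeta)m(\zeta)-s_{j,k}(\zeta),
\]
where $s_{j,k}=p_j^R q_k^L$ for $j\ge k$ and $s_{j,k}=q_j^R p_k^L$ for $j\le k$, the claim reduces to the vanishing
\[
I_{j,k}(z):=\frac{1}{2\pi i}\int_\Gamma\frac{s_{j,k}(\zeta)}{z-\zeta}\,d\zeta=0.
\]
Now $s_{j,k}$ is a rational function whose only poles, by \eqref{def_R}--\eqref{def_L}, are among the interpolation points $\{z_1,z_2,\ldots\}$, which lie in $\Gamma_{ext}$ by \eqref{assumption2}; together with $z\in\Gamma_{ext}$, this makes $s_{j,k}(\zeta)/(z-\zeta)$ analytic throughout $\Gamma_{int}$ except possibly at $\infty$. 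When $\infty\in\Gamma_{ext}$, $\mbox{Clos}(\Gamma_{int})$ is compact and Cauchy's theorem gives $I_{j,k}(z)=0$ directly. When instead $\infty\in\Gamma_{int}$, \eqref{assumption2} forces every $z_i$ to be finite and hence every $\alpha_\ell^L,\alpha_\ell^R$ to have exact degree $1$; a degree count using $\deg p_n\le n-1$ and $\deg q_n\le n$ then shows $s_{j,k}(\zeta)=O(1/\zeta)$ at infinity, so the integrand is $O(1/\zeta^2)$ there and deformation of $\Gamma$ to a large circle again yields $I_{j,k}(z)=0$.

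The main obstacle is the bookkeeping at infinity in the subcase $\infty\in\Gamma_{int}$: one has to confirm the decay rate of $s_{j,k}$ from the degree structure of \eqref{def_R}--\eqref{def_L} and apply Cauchy's theorem consistently with the "$\Gamma_{ext}$ on the right" orientation of $\Gamma$. Everything else reduces to a straightforward use of the resolvent representation from Theorem~\ref{thChar}.
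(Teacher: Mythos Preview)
Your proposal is correct and follows essentially the same approach as the paper's proof: apply the Cauchy formula to the analytic function $R(\cdot)_{j,k}$, split via Theorem~\ref{thChar} into the desired integral plus the correction term $\int_\Gamma s_{j,k}(\zeta)/(z-\zeta)\,d\zeta$, and kill the latter by a pole/degree analysis of the rational function $s_{j,k}$ on the complement of $\Gamma_{ext}$, distinguishing the bounded and unbounded cases. The paper restricts to $j\ge k$ and phrases the case split as ``$\Omega$ bounded/unbounded'' rather than ``$\infty\in\Gamma_{ext}/\Gamma_{int}$'', but the content is identical.
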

\begin{proof}
    We will consider only the case $j\geq k$, the case $j<k$ is
    similar. Both the resolvent and $R_{j,k}$ are analytic in
    $\rho(A,B)$, and vanishing at infinity provided that $\infty
    \in\rho(A,B)$. Using the explicit formula for $R(z)_{j,k}$
    derived in Theorem~\ref{thChar}, we get for $z\in \Gamma_{ext}$
    \begin{eqnarray*}
          R(z)_{j,k}
          &=&
          \frac{1}{2\pi i} \int_\Gamma r_j^R(\zeta)q_k^L(\zeta)
          \frac{d\zeta}{z-\zeta}
          \\&=&
          \frac{1}{2\pi i} \int_\Gamma q_j^R(\zeta)q_k^L(\zeta)
          \frac{m(\zeta)}{z-\zeta} \, d\zeta -
          \frac{1}{2\pi i} \int_\Gamma p_j^R(\zeta)q_k^L(\zeta)
          \frac{d\zeta}{z-\zeta}.
    \end{eqnarray*}
    It remains to show that the last integral equals zero.
    Denote by $\Omega$ a connected component of
    $\overline{\mathbb C} \setminus \mbox{Clos}(\Gamma_{ext})$.
    If $\Omega$ is bounded, then, by assumption \eqref{assumption2},
    all poles of the rational function $\zeta \mapsto
    p_j^R(\zeta)q_k^L(\zeta)/(z-\zeta)$ are outside of
    $\mbox{Clos}(\Omega)$, and hence the integral over $\partial \Omega$
    is zero. If $\Omega$ is unbounded, then by the above
    assumption on $\Gamma_{ext}$ we may conclude that
    $\infty \not\in \rho(A,B)$, implying that all $z_\ell$ are
    finite. It follows from \eqref{def_R} and \eqref{def_L} that
    all poles of the rational function $\zeta \mapsto
    p_j^R(\zeta)q_k^L(\zeta)/(z-\zeta)$ are outside of
    $\mbox{Clos}(\Omega)$, and this function does vanish at $\infty$.
    Hence again the integral over $\partial \Omega$
    is zero.
\end{proof}

We are now prepared to state and to give a new constructive proof
of the Favard type Theorems \cite[Theorem~2.1 and
Theorem~3.5]{IM95} of Ismail and Masson.

\begin{theorem}\label{favard}
    Under the assumption \eqref{assumption2}, define for
    $g\in \mathcal C(\Gamma)$ the linear functional
    \[
           \sS(g)=\frac{1}{2\pi i} \int_\Gamma g(\zeta) m(\zeta)
           \, d\zeta,
    \]
    then we have the following biorthogonality relations: for any
    $n \geq 1$ and for any polynomial $p$ of degree $<n$ there
    holds
    \[
           \sS\left( q_n^R \frac{p}{\alpha_0^L\alpha_1^L\dots
           \alpha_{n-1}^L} \right) = 0 , \quad
           \sS\left( \frac{p}{\alpha_0^R\alpha_1^R\dots
           \alpha_{n-1}^R} q_n^L  \right) = 0 .
    \]
\end{theorem}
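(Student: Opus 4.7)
The plan is to reduce the biorthogonality to a vanishing property of $r_n$ at the interpolation points and to establish the latter via a block-triangularization of the pencil. Set $L_n(\zeta):=\al_0(\zeta)\cdots\al_{n-1}(\zeta)$ and $R_n(\zeta):=\ar_0(\zeta)\cdots\ar_{n-1}(\zeta)$. For the first identity, I would decompose $h:=p/L_n$ by partial fractions under the generic assumption that the roots $z_1,z_3,\dots,z_{2n-1}$ of $L_n$ are simple, distinct and finite, giving $h(\zeta)=\sum_{k=0}^{n-1} c_k/(\zeta-z_{2k+1})$ for suitable constants $c_k=c_k(p)$. Applying Lemma~\ref{integral} with $j=n$, $k=0$ (so $q_0^L\equiv 1$) yields
\[
   r_n^R(z)=\frac{1}{2\pi i}\int_\Gamma q_n^R(\zeta)\,\frac{m(\zeta)}{z-\zeta}\,d\zeta,\qquad z\in \Gamma_{ext},
\]
and substituting $z=z_{2k+1}\in\Gamma_{ext}$ gives $\sS(q_n^R/(\zeta-z_{2k+1}))=-r_n^R(z_{2k+1})$. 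Hence $\sS(q_n^R p/L_n)=-\sum_k c_k\,r_n^R(z_{2k+1})$, and the first identity is reduced to showing that $r_n^R(z_{2k+1})=0$ for each $k=0,\dots,n-1$.

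The key observation for the latter is that at $z=z_{2k+1}$ the subdiagonal entry $(zB-A)_{k+1,k}=-\al_k(z_{2k+1})$ vanishes, so $z_{2k+1}B-A$ is block upper triangular with upper-left block $z_{2k+1}B_{[0:k]}-A_{[0:k]}$. Inverting this block structure together with \eqref{cramer} gives
\[
   m(z_{2k+1})=\bigl\langle (z_{2k+1}B_{[0:k]}-A_{[0:k]})^{-1}e_0,e_0\bigr\rangle=\frac{p_{k+1}(z_{2k+1})}{q_{k+1}(z_{2k+1})},
\]
that is, $r_{k+1}(z_{2k+1})=0$. Since $\al_k(z_{2k+1})=0$, the recurrence \eqref{r2rec} at $z=z_{2k+1}$ reads $r_{k+2}(z_{2k+1})=\beta_{k+1}(z_{2k+1})\,r_{k+1}(z_{2k+1})=0$, and by induction $r_n(z_{2k+1})=0$ for every $n\geq k+1$. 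Dividing by the (generically) nonzero factor $R_n(z_{2k+1})$ finally gives $r_n^R(z_{2k+1})=0$.

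The second identity is strictly analogous: Lemma~\ref{integral} with $j=0$, $k=n$ yields $r_n^L(z)=\frac{1}{2\pi i}\int_\Gamma q_n^L(\zeta)m(\zeta)/(z-\zeta)\,d\zeta$; partial fractions of $p/R_n=\sum_k d_k/(\zeta-z_{2k+2})$ reduce the claim to $r_n^L(z_{2k+2})=0$, which follows from the mirror argument in which the superdiagonal entry $(zB-A)_{k,k+1}=-\ar_k(z_{2k+2})$ vanishes at $z=z_{2k+2}$, making $z_{2k+2}B-A$ block lower triangular with upper-left block $z_{2k+2}B_{[0:k]}-A_{[0:k]}$. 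The main obstacle I anticipate is handling the non-generic cases: coincident interpolation points force higher-order zeros of $r_n$, which are obtained by iterating the recurrence argument, while the possibility $z_\ell=\infty$ (corresponding to a constant $\al_k$ or $\ar_k$) introduces a polynomial part in the partial fraction decomposition, so one must additionally verify the vanishing of the appropriate moment using the asymptotic behaviour of $r_n^R$ and $r_n^L$ at infinity.
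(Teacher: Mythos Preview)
Your argument is correct and shares with the paper the key observation: the block upper-triangularization of $zB-A$ at an interpolation point. The two proofs differ in how this is exploited. The paper works at the \emph{single} point $z_{2n-1}$, where the upper-left block has size $n$, and reads off the vanishing of the $n$ resolvent entries $R(z_{2n-1})_{n,k}$ for $k=0,\dots,n-1$; via Lemma~\ref{integral} this yields $\sS\bigl(q_n^R\,q_k^L/\alpha_{n-1}^L\bigr)=0$, and the span identity
\[
   \mathrm{span}\Bigl\{\frac{q_k^L}{\alpha_{n-1}^L}:0\le k<n\Bigr\}
   =\Bigl\{\frac{p}{\alpha_0^L\cdots\alpha_{n-1}^L}:\deg p<n\Bigr\}
\]
(established from $q_k(z_{2k-1})\neq 0$ for each $k$) completes the proof. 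You instead vary the interpolation point, evaluating the single resolvent entry $R(z_{2k+1})_{n,0}=r_n^R(z_{2k+1})$ at each $z_{2k+1}$, and span the target space by a partial-fraction basis. The paper's route handles repeated and infinite interpolation points uniformly; yours is more transparent in the generic situation but requires the case distinctions you correctly flagged.

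One simplification is worth noting: your recurrence propagation from $r_{k+1}(z_{2k+1})=0$ to $r_n(z_{2k+1})=0$ is valid but unnecessary. Once $z_{2k+1}B-A$ is seen to be block upper triangular with upper-left block of size $k+1$, its inverse has the same block structure, so $r_n^R(z_{2k+1})=R(z_{2k+1})_{n,0}=0$ for every $n\ge k+1$ follows directly from Theorem~\ref{thChar}. This bypasses the division by $\alpha_0^R(z_{2k+1})\cdots\alpha_{n-1}^R(z_{2k+1})$ and the associated genericity assumption.
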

\begin{proof}
    We again only show the first relation, the second follows by
    symmetry. Observe first that $\alpha_{n-1}^L(z_{2n-1})=0$
    implies that $zB-A$ is upper block-diagonal. Since
    $z_{2n-1}\in \rho(A,B)$ by \eqref{assumption2}, we obtain for
    the resolvent $(z_{2n-1}B-A)^{-1}$ the block matrix
    representation
    \[
         \left[\begin{array}{c|c}
               (z_{2n-1}B_{[0:n-1]}-A_{[0:n-1]})^{-1} & *
               \\ \hline
               0 & (z_{2n-1}B_{[n:\infty]}-A_{[n:\infty]})^{-1}
            \end{array}\right].
    \]
    In particular, comparing with \eqref{finite_det} it follows
    that $q_k(z_{2k-1})\neq 0$ for $k=0,1,...,n-1$ (or $\deg
    q_k=k$ provided that $z_{2k-1}=\infty$), and
    \[
         R(z_{2n-1})_{n,k}=0, \quad k=0,1,...,n-1
    \]
    (or $\lim_{z\to \infty} z R(z)_{n,k}=0$ in the case
    $z_{2n-1}=\infty$).
    The first relation implies that
    \[
         \mbox{span}\left\{\frac{q_k^L}{\alpha_{n-1}^L} : k=0,1,...,n-1
         \right\} = \left\{ \frac{p}{\alpha_0^L\alpha_1^L\dots
           \alpha_{n-1}^L} : \deg p < n \right\} ,
    \]
    and the second combined with Lemma~\ref{integral} that
    \[
         \sS \Bigl( q_n^R \frac{q_k^L}{\alpha_{n-1}^L}\Bigr)=0,
         \quad k=0,1,...,n-1,
    \]
    as claimed in Theorem~\ref{favard}.
\end{proof}

\begin{remark}
    In the statement of Theorem~\ref{favard}, one recovers the $m$-function
    as a
    generating function for the linear functional of
    orthogonality,
    since
    \[
          z \mapsto \sS_\zeta\left( \frac{1}{z-\zeta} \right)
          = m(z) , \quad z \in \Gamma_{ext} .
    \]
    Suppose in addition that $\infty \in
    \rho(A,B)$, and thus $B$ has a bounded inverse.
    Then Cauchy's theorem gives the normalisation
    $\sS(1)=m'(\infty)=\langle B^{-1} e_0,e_0 \rangle$, and for
    $\ell
    \geq 0$
    \[
       \sS_\zeta(\zeta^\ell)=\langle B^{-1}(AB^{-1})^\ell e_0,e_0
       \rangle.
    \]
    Similarly, for $z_k\in \Gamma_{ext}$ and $\ell\geq 0$
    we have that
    \[
        \frac{m^{(\ell)}(z_k)}{\ell!}=\sS_\zeta\left(
        \frac{-1}{(\zeta-z_\ell)^{\ell+1}}\right)
        = - \langle B^{-1} (AB^{-1} -z_k)^{-1-\ell} e_0,e_0
       \rangle.
    \]
    Using a partial fraction decomposition, we obtain for any
    polynomial $p$ (of degree $<2n$ if $\infty \not \in \rho(A,B)$)
    the even simpler formula
    \[
          \sS(r)
          = \langle B^{-1} r(AB^{-1}) e_0,e_0 \rangle, \quad
          r = \frac{p}{\alpha_0^L\alpha_0^R\dots
          \alpha_{n-1}^L\alpha_{n-1}^R}.
    \]\qed
\end{remark}

The orthogonality relations of Theorem~\ref{favard} allow now to
show in a simple way that the convergents of our continued
fraction \eqref{CF} are indeed multipoint Pad\'e approximants.

\begin{corollary}\label{interpolants}
     Under the assumption \eqref{assumption2}, for any $n \geq 0$,
     the rational function $p_n/q_n$ is an $[n-1|n]$ multipoint Pad\'e approximant
     of the $m$-function of the pencil $zB-A$ at the points
     $z_1,...,z_{2n}$ counting multiplicities.
\end{corollary}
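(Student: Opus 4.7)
The plan is to verify directly the vanishing condition of Definition \ref{PadeInt}: that $r_n(z) = m(z)q_n(z) - p_n(z)$ vanishes at the points $z_1,\dots,z_{2n}$ counting multiplicities. Fix one such interpolation point $z_\ell$ and write $\mu_{L,\ell}$ (resp.\ $\mu_{R,\ell}$) for its multiplicity as a root of $\prod_{k=0}^{n-1}\alpha_k^L$ (resp.\ $\prod_{k=0}^{n-1}\alpha_k^R$), so that the total multiplicity of $z_\ell$ in the list $\{z_1,\dots,z_{2n}\}$ equals $m_\ell = \mu_{L,\ell}+\mu_{R,\ell}$. Since by \eqref{def_L} we have the factorization $r_n = r_n^L\prod_{k=0}^{n-1}\alpha_k^L$, it suffices to prove that $r_n^L$ vanishes at $z_\ell$ with order at least $\mu_{R,\ell}$; together with the degree bounds $\deg p_n\le n-1$ and $\deg q_n\le n$ coming from \eqref{finite_det}, this identifies $p_n/q_n$ as the desired $[n-1|n]$ multipoint Pad\'e approximant of the $m$-function.

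The main tool is the integral representation of Lemma \ref{integral} coupled with the second biorthogonality relation in Theorem \ref{favard}. Since by Theorem \ref{thChar} one has $r_n^L(z) = R(z)_{0,n}$, Lemma \ref{integral} gives for $z\in\Gamma_{ext}$
\[
   r_n^L(z) = \frac{1}{2\pi i}\int_\Gamma \frac{q_n^L(\zeta)\,m(\zeta)}{z-\zeta}\,d\zeta,
\]
and differentiating $j$ times under the integral sign and setting $z = z_\ell\in\Gamma_{ext}$ yields
\[
   (r_n^L)^{(j)}(z_\ell) = -\frac{j!}{2\pi i}\int_\Gamma \frac{q_n^L(\zeta)\,m(\zeta)}{(\zeta-z_\ell)^{j+1}}\,d\zeta.
\]
For each $j$ with $0\le j\le \mu_{R,\ell}-1$ the factor $(\zeta-z_\ell)^{j+1}$ divides $\prod_{k=0}^{n-1}\alpha_k^R(\zeta)$, so the polynomial $\tilde p(\zeta) := \prod_{k=0}^{n-1}\alpha_k^R(\zeta)/(\zeta-z_\ell)^{j+1}$ has degree at most $n-j-1<n$, and the integrand rewrites as $\tilde p(\zeta)\,q_n^L(\zeta)\,m(\zeta)/\prod_{k=0}^{n-1}\alpha_k^R(\zeta)$. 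Theorem \ref{favard} applied to $\tilde p$ then makes the integral equal to $2\pi i\,\sS\bigl(\tfrac{\tilde p}{\prod_k\alpha_k^R}\, q_n^L\bigr) = 0$, so $(r_n^L)^{(j)}(z_\ell)=0$ for all such $j$, as required.

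The key obstacle, and the reason this combination of Lemma \ref{integral} and Theorem \ref{favard} is essential, is the case when $z_\ell$ is simultaneously a root of $\prod_{k}\alpha_k^L$ and $\prod_{k}\alpha_k^R$. Appealing only to the analyticity of $r_n^L$ and $r_n^R$ on $\rho(A,B)$ furnished by Theorem \ref{thChar} would merely guarantee vanishing of $r_n$ to order $\max(\mu_{L,\ell},\mu_{R,\ell})$ at $z_\ell$, strictly less than the required $m_\ell=\mu_{L,\ell}+\mu_{R,\ell}$; the extra orders come from the Favard orthogonality. The case $z_\ell=\infty$, which occurs only when some $\alpha_k^L$ or $\alpha_k^R$ has degree $0$ (forcing $\infty\in\Gamma_{ext}$, equivalently $B$ boundedly invertible), is handled along the same lines by replacing the Taylor expansion of $r_n^L$ at $z_\ell$ with its Laurent expansion at infinity, the relevant integrals again vanishing by Theorem \ref{favard} applied to suitable polynomial test functions.
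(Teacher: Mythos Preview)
Your proof is correct and follows essentially the same strategy as the paper's: combine the integral representation of Lemma~\ref{integral} with the orthogonality of Theorem~\ref{favard} to upgrade the analyticity of $r_n^L$ (or $r_n^R$) on $\rho(A,B)$ to the full interpolation order. The paper works with $r_n^R$ and the first Favard relation via Hermite divided differences, while you work with $r_n^L$ and the second relation via direct Taylor differentiation; these are mirror images of one another. One small omission: Definition~\ref{PadeInt} requires $q_n\not\equiv 0$, which the paper checks explicitly (via $q_n(z_{2n-1})\neq 0$, established in the proof of Theorem~\ref{favard}) and you should mention as well.
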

\begin{proof}
     Relation \eqref{finite_det} shows that $p_n$, $q_n$, are
     polynomials of degree at most $n-1$, and $n$, respectively,
     and from the proof of Theorem~\ref{favard} we know that
     $q_n(z_{2n-1})\neq 0$, hence $q_n$ is non-trivial.

     The interpolation conditions for a Cauchy transform
     (or more generally for a generating function of
     a linear functional) are known to translate to orthogonality
     relations with varying weights, see for instance \cite[Lemma~6.1.2]{STO}.
     Since $r_n^R=(m q_n-p_n)/(\alpha_0^R\dots \alpha_{n-1}^R)$ is
     analytic in $\Gamma_{ext}$ (and vanishes
     at $\infty$ if $\infty\in\Gamma_{ext}$), we only have to
     show that $\omega:=r_n^R/(\alpha_0^L\dots \alpha_{n-1}^L)$
     is analytic in $\Gamma_{ext}$, and, provided that $\infty\in
     \Gamma_{ext}$, its expansion at $\infty$ starts with a term
     $z^{-n-1}$.

     Denote by $\widetilde z_1,...,\widetilde z_{\ell(k)}$ the
     finite points out of $z_1,z_3,...,z_{2k-1}$ with $k \leq n$.
     If $\ell(k)\geq 1$, define
     \[
          \widetilde p(z)=\frac{\alpha_0^L(z)\dots \alpha_{n-1}^L(z)}{(z-\widetilde z_{\ell(1)})\dots (z-\widetilde
          z_{\ell(k)})},
     \]
     a polynomial of degree $<n$. Arguing as in
     Lemma~\ref{integral} and using the Hermite integral formulas
     for divided differences we find that
     \[
           [\widetilde z_{\ell(1)},...,\widetilde z_{\ell(k)}]
             r_n^R
             =
             \frac{1}{2\pi} \int_\Gamma
             \frac{r_n^R(\zeta) \widetilde p(\zeta)}{\alpha_0^L(\zeta)\dots
             \alpha_{n-1}^L(\zeta)} \, d\zeta
             = \sS\left(\frac{q_n^R \widetilde p}{\alpha_0^L\alpha_1^L\dots
           \alpha_{n-1}^L} \right) = 0,
     \]
     where in the last step we have applied the orthogonality
     relation of Theorem~\ref{favard}. Hence $\omega$ is indeed
     analytic in $\Gamma_{ext}$. If $\infty\in \Gamma_{ext}$,
     we find by a similar argument for the expansion of $\omega$
     at $\infty$
     \begin{eqnarray*}
          \omega(z)&=&
             \frac{1}{2\pi} \int_\Gamma
             \frac{r_n^R(\zeta)}{\alpha_0^L(\zeta)\dots
             \alpha_{n-1}^L(\zeta)} \, \frac{d\zeta}{z-\zeta}
             \\
             &=&
             \sS_\zeta \left(\frac{q_n^R(\zeta)}{\alpha_0^L(\zeta)\dots
           \alpha_{n-1}^L(\zeta)(z-\zeta)} \right)
            = \sum_{j=0}^\infty z^{-j-1}
             \sS_\zeta \left(\frac{q_n^R(\zeta) \zeta^j}{\alpha_0^L(\zeta)\dots
           \alpha_{n-1}^L(\zeta)} \right),
     \end{eqnarray*}
     which again by Theorem~\ref{favard} starts with the term
     $z^{-n-1}$.
\end{proof}

\section{Convergence results for multipoint Pad\'e approximants}\label{sec_3}

The aim of this section is to generalize various convergence
results for complex Jacobi matrices to the setting of linear
pencils.

\subsection{Numerical ranges of linear pencils}

It is well known that zeros of formal orthogonal polynomials lie
in the numerical range of the corresponding tridiagonal operator.
Moreover, the corresponding sequence of Pad\'e approximants
converges locally uniformly outside the closure of the numerical
range~\cite[Theorem~3.10]{BK97}. In this section, we generalize
this machinery to the case of linear pencils and multipoint Pad\'e
approximants.

Let us recall that, for a bounded operator $T$ acting in $\ell^2$,
its numerical range is defined by
\[
\Theta(T):=\{(Ty,y)_{\ell^2}: \Vert y\Vert=1\}\subset\dC
\]
Clearly, $\Theta(T)$ is a bounded set. By the Hausdorff theorem we
have that the spectrum $\sigma(T)$ of $T$ is a subset of the
convex set $\overline{\Theta(T)}$ (for instance,
see~\cite[Section~26]{Mar}). The following definition generalizes
the concept of numerical ranges to the linear pencil case.
\begin{definition}[\cite{Mar}]\label{num_ran}
The set
\[
W(A,B):=\{z\in\dC: \langle(zB-A)y,y\rangle_{\ell^2}=0 \text{~for
some } y\ne 0\}
\]
is called a numerical range of the linear pencil $zB-A$.
\end{definition}
The following proposition is immediate from Definition~\ref{num_ran}.
\begin{proposition}\label{3.2}
All the zeros of $q_n$ and $p_n$ belong to $W(A,B)$.
\end{proposition}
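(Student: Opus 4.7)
The plan is to exploit the determinantal formulas \eqref{finite_det}, which identify $q_n$ and $p_n$ as the characteristic polynomials of the two principal submatrix pencils $zB_{[0:n-1]}-A_{[0:n-1]}$ and $zB_{[1:n-1]}-A_{[1:n-1]}$. A zero of one of these characteristic polynomials is precisely a value $z_0$ at which the corresponding finite pencil is singular, so Gaussian elimination (or the definition of the determinant) yields a nonzero finite vector in its kernel. The remaining task is to lift such a finite vector to $\ell^2$ by padding with zeros, and check that the lift witnesses $z_0 \in W(A,B)$.

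Concretely, suppose first that $q_n(z_0)=0$. By \eqref{finite_det} there exists a nonzero $v=(v_0,\dots,v_{n-1})^{\top}\in \mathbb C^n$ with $(z_0B_{[0:n-1]}-A_{[0:n-1]})v=0$. Extend it by $y=(v_0,\dots,v_{n-1},0,0,\dots)^{\top}\in\ell^2$. Because $zB-A$ is tridiagonal, the components $((z_0B-A)y)_k$ for $k=0,1,\dots,n-1$ coincide with the components of $(z_0B_{[0:n-1]}-A_{[0:n-1]})v$, hence vanish; the only nonzero component lies at index $n$, where it equals $-\alpha_{n-1}^L(z_0)v_{n-1}$. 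Since $y_n=0$, the inner product $\langle (z_0B-A)y,y\rangle$ reduces to a sum over the nonzero components of $y$, each multiplied by zero from $(z_0B-A)y$. Therefore $\langle(z_0B-A)y,y\rangle=0$ with $y\neq 0$, i.e. $z_0\in W(A,B)$.

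For the case $p_n(z_0)=0$, by \eqref{finite_det} there exists a nonzero $w=(w_1,\dots,w_{n-1})^{\top}$ with $(z_0B_{[1:n-1]}-A_{[1:n-1]})w=0$. Lift it to $y\in\ell^2$ by setting $y_0=0$, $y_k=w_k$ for $k=1,\dots,n-1$, and $y_k=0$ for $k\geq n$. The tridiagonal structure now localizes $(z_0B-A)y$ to two potentially nonzero components, at indices $0$ and $n$, taking values $-\alpha_0^R(z_0)w_1$ and $-\alpha_{n-1}^L(z_0)w_{n-1}$ respectively; all other components match the vanishing components of $(z_0B_{[1:n-1]}-A_{[1:n-1]})w$. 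Since $y_0=y_n=0$, one again obtains $\langle(z_0B-A)y,y\rangle=0$, and $y\neq 0$ since $w\neq 0$, giving $z_0\in W(A,B)$.

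There is no serious obstacle here: the argument is a direct bookkeeping exercise based on the tridiagonality of the pencil. The one subtlety worth verifying carefully is the indexing convention for $A_{[1:n-1]}$ and $B_{[1:n-1]}$ used in \eqref{finite_det}, so that the lifted vector $y$ has its support strictly between the two ``interface'' rows $0$ and $n$ where the only residual components of $(z_0B-A)y$ can appear. Once this is confirmed, the vanishing of the inner product is automatic.
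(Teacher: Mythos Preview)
Your argument is correct and follows essentially the same route as the paper: both proofs use \eqref{finite_det} to produce a nonzero kernel vector for the finite subpencil and then pad with zeros to obtain a witness for $z_0\in W(A,B)$. The paper compresses your explicit bookkeeping into the single line $\xi\in W(A_{[0:n-1]},B_{[0:n-1]})\subset W(A,B)$ (and similarly with the block $[1{:}n{-}1]$ for $p_n$), the inclusion being immediate since for $\tilde y$ supported on an index set $I$ one has $\langle(zB-A)\tilde y,\tilde y\rangle=\langle(zB_I-A_I)y,y\rangle$ regardless of tridiagonality; your detailed tracking of the residual components at the interface rows is therefore more than is strictly needed, but it is not wrong.
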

\begin{proof}
Let us suppose that $\xi$ is a zero of the polynomial $q_n$. Thus,
according to~\eqref{finite_det}, there exists an element
$y_{\xi}\in\dC^{n}$ such that
\[
(\xi B_{[0:n-1]}-A_{[0:n-1]})y_{\xi}=0,\quad \Vert y_{\xi}\Vert=1.
\]
The latter relation implies $\xi\in
W(A_{[0:n-1]},B_{[0:n-1]})\subset W(A,B)$. Similarly, we have the
inclusion of the zeros of $p_j$ to $W(A,B)$.
\end{proof}
In general, for the bounded operators $A$ and $B$, the set
$W(A,B)$ is neither convex nor bounded.
 However, it turns out that the condition
\begin{equation}\label{NR0}
0\not\in\overline{\Theta(B)}
\end{equation}
implies $\sigma(A,B)\subset\overline{W(A,B)}$~\cite[Section
26]{Mar}, as well as the representation
\begin{equation}
\label{num_ran2}
  W(A,B)=
 \left\{\frac{\langle Af,f\rangle}{\langle Bf,f\rangle}:\,\,
      f\ne 0\right\}=
      \left\{\frac{\langle Af,f\rangle}{\langle Bf,f\rangle}:\,\, \Vert
      f\Vert=1\right\},
\end{equation}
from which we see
the boundedness of $W(A,B)$.
  Condition \eqref{NR0} implies that $B$ is boundedly invertible,
  but, in contrary to the spectrum,
  in general it does not imply any link with the numerical range of
  the operators $B^{-1} A$ or $A B^{-1}$. To see this, one may extend the following simple
  $2\times 2$ example to the pencil case
  $$
       A = \left[\begin{array}{cc} 1 & 1 \\ 0 & 1
              \end{array}\right],
       \qquad B = \left[\begin{array}{cc} 1 & 0 \\ 0 & 2
              \end{array}\right],
  $$ where
  $\Theta(B^{-1} A) \varsubsetneq \Theta(B^{-1/2} A
  B^{-1/2})=\Theta(A,B) \varsubsetneq \Theta(A
  B^{-1})$ are ellipses with the same foci but different
  eccentricities.

Generalizing \cite[Theorem~3.10]{BK97} for complex Jacobi
matrices, we are able to prove a result on locally uniform
convergence which in some sense generalizes the Gonchar
theorem~\cite{Gon82}.
\begin{theorem}\label{un_loc_con}
Let~\eqref{NR0} be satisfied. Then the sequence of multipoint
 Pad\'e approximants $m_{[0:n]}:=p_{n+1}/q_{n+1}$
converges to the $m$-function locally uniformly in $\mathbb C
\setminus \overline{W(A,B)}$.
\end{theorem}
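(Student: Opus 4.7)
The plan is to obtain a quantitative (in fact geometric) estimate
\[
   \left| m(z) - \frac{p_{n+1}(z)}{q_{n+1}(z)} \right| \leq C_K \delta_K^{n+1}, \quad z \in K,
\]
uniformly on each compact $K \subset \mathbb C\setminus\overline{W(A,B)}$, for suitable $C_K$ and $\delta_K \in (0,1)$. The argument rests on three ingredients: a uniform resolvent bound coming from $\overline{\Theta(B)}$ being bounded away from $0$, a projection-style identity relating the finite-section error to a resolvent entry of the infinite pencil, and the geometric decay of these entries from Theorem~\ref{thChar}.

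First I would establish a uniform resolvent bound. By \eqref{NR0}, $c := \dist(0,\overline{\Theta(B)}) > 0$, so $|\langle By,y\rangle| \geq c\|y\|^2$ for all $y \in \ell^2$. For $z \notin \overline{W(A,B)}$ and $d(z) := \dist(z,W(A,B))$, representation \eqref{num_ran2} gives
\[
   |\langle(zB-A)y,y\rangle| = |\langle By,y\rangle|\cdot \left| z - \frac{\langle Ay,y\rangle}{\langle By,y\rangle}\right| \geq c\,d(z)\,\|y\|^2,
\]
hence $\|(zB-A)^{-1}\| \leq 1/(c\,d(z))$. The same estimate applies to the truncation $T_n := zB_{[0:n-1]} - A_{[0:n-1]}$ since, for $y$ supported on $\{0,\ldots,n-1\}$, one has $\langle T_n y,y\rangle = \langle(zB-A)y,y\rangle$. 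In particular $\mathbb C\setminus\overline{W(A,B)} \subset \rho(A,B)$, and on the compact set $K$ both $\|(zB-A)^{-1}\|$ and $\|T_n^{-1}\|$ are bounded by $1/(c\,d_K)$ uniformly in $n$, with $d_K := \min_{z\in K}d(z) > 0$.

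Next I would derive the key error identity. Let $u := T_n^{-1}e_0$, viewed as an element of $\ell^2$ by extension by zero, and let $v := (zB-A)^{-1}e_0$, so that $u_0 = p_n(z)/q_n(z)$ by \eqref{cramer} and $v_0 = m(z)$. Because $T := zB-A$ is tridiagonal and $u$ vanishes from index $n$ on, direct evaluation yields $Tu = e_0 - \alpha_{n-1}^L(z)\,u_{n-1}\,e_n$. Subtracting $Tv = e_0$ and applying $T^{-1}$ gives $v - u = \alpha_{n-1}^L(z)\,u_{n-1}\,T^{-1}e_n$, and the zeroth component combined with $\langle T^{-1}e_n,e_0\rangle = R(z)_{0,n} = r_n^L(z)$ from Theorem~\ref{thChar} produces
\[
   m(z) - \frac{p_n(z)}{q_n(z)} = \alpha_{n-1}^L(z)\cdot(T_n^{-1}e_0)_{n-1}\cdot r_n^L(z).
\]
The three factors are then controlled uniformly on $K$: $|\alpha_{n-1}^L(z)| \leq \|zB-A\|$ is bounded since $A,B$ are bounded; $|(T_n^{-1}e_0)_{n-1}| \leq \|T_n^{-1}\| \leq 1/(c\,d_K)$ uniformly in $n$ by the first step; and $|r_n^L(z)| \leq \gamma(z)\delta(z)^n$ with $\delta(z) \in (0,1)$ by Theorem~\ref{thChar}, where Remark~\ref{rate} expresses $\gamma(z),\delta(z)$ continuously through $\kappa(z)$, so on $K \subset \rho(A,B)$ they can be chosen uniform with $\delta_K \in (0,1)$. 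Multiplying gives $|m(z) - p_n(z)/q_n(z)| \leq C_K\delta_K^n$, and reindexing via $m_{[0:n]} = p_{n+1}/q_{n+1}$ concludes.

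The main obstacle is writing the error in a form whose bounds can be extracted from the operator-norm estimates already at hand. The naive expression $m - p_n/q_n = r_n^L/q_n^L$ would force a uniform lower bound on $|q_n^L(z)|$, which is awkward because the linear factors $\alpha_k^L$ in its denominator may have zeros outside $\overline{W(A,B)}$ or behave irregularly with $n$. Rewriting $1/q_n^L$ instead as $\alpha_{n-1}^L\cdot(T_n^{-1})_{n-1,0}$ via the projection identity replaces that awkward lower bound by the uniform operator bound $\|T_n^{-1}\| \leq 1/(c\,d_K)$, bypassing any direct analysis of the polynomials themselves and also avoiding the more delicate normal-families route that Corollary~\ref{pointwise} alone would force one to take.
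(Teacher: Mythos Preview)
Your argument is correct. The error identity
\[
   m(z)-\frac{p_n(z)}{q_n(z)}
   =\alpha_{n-1}^L(z)\,\bigl(T_n^{-1}e_0\bigr)_{n-1}\,r_n^L(z)
\]
follows exactly as you say from the tridiagonal structure, and each of the three factors is controlled uniformly on a compact $K\subset\mathbb C\setminus\overline{W(A,B)}$ by the numerical-range resolvent bound and by the decay estimate of Theorem~\ref{thChar} together with the explicit constants of Remark~\ref{rate}. This yields the announced geometric rate.

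Your route is genuinely different from the paper's. Both proofs begin with the same lower bound $|\langle(zB-A)y,y\rangle|\ge c\,d(z)\,\|y\|^2$ and the resulting uniform estimate $\sup_n\|T_n^{-1}\|\le 1/(c\,d_K)$. From there the paper proceeds by a \emph{soft} argument: it shows strong resolvent convergence $(zB_{[0:n]}-A_{[0:n]})^{-1}\eta\to(zB-A)^{-1}\eta$ first on finitely supported $\eta$ (where it is trivially an equality for large $n$) and then on all of $\ell^2$ by density and the uniform bound; this gives only pointwise convergence $m_{[0:n]}(z)\to m(z)$, and the Vitali theorem upgrades it to locally uniform convergence. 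You instead exploit the projection identity to reduce the error directly to the resolvent entry $R(z)_{0,n}=r_n^L(z)$, whose decay is already quantified. The payoff of your approach is a concrete rate $|m(z)-m_{[0:n]}(z)|\le C_K\delta_K^{\,n+1}$ with $\delta_K<1$ depending only on $\sup_K\kappa(z)$, something the paper's Vitali argument does not deliver; the paper's approach, in turn, is slightly more self-contained in that it does not invoke the Demko--Moss--Smith decay estimate underlying Remark~\ref{rate}.
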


\begin{proof}
   Denote by $D\subset \mathbb C \setminus
    \overline{W(A,B)}$ a closed set with compact boundary. Setting
     $d:=\inf\limits_{\Vert f\Vert=1}|\langle Bf,f\rangle|>0$, we find for
     $z\in \partial D$ and $\Vert f \Vert=1$ that
     \[
\Vert(zB-A)f\Vert\ge 
|\langle
Bf,f\rangle| \, \left| z - \frac{\langle Af,f\rangle}{\langle
Bf,f\rangle} \right| \ge  d \, \dist(z,W(A,B)),
\]
implying that
\[
    \max_{z \in \partial D} \| (zB-A)^{-1} \| \leq d_1 :=
      \frac{1}{d} \max_{z \in \partial D}
      \frac{1}{\dist(z,W(A,B))}.
\]
 Since $W(A_{[0:n-1]},B_{[0:n-1]})\subset W(A,B)$, the same argument
 can be used to estimate the norm of the resolvent of finite subsections
\begin{equation}\label{unbound}
    \max_{z \in \partial D} \| (zB_{[0:n]}-A_{[0:n]})^{-1} \| \leq d_1 .
\end{equation}
Let $\psi$ be  a finite sequence, that is, $\psi=(\psi_1,\dots,\psi_k,0,0,\dots)^{\top}$.
Then
\[
(zB-A)\psi=(zB_{[0:j]}-A_{[0:j]})\psi=\eta
\]
for sufficiently large $j\in\dZ_+$ and $\eta$ is also a finite sequence. Further,
one obviously has
\begin{equation}\label{srcon}
(zB-A)^{-1}\eta=\lim_{j\to\infty}(zB_{[0:j]}-A_{[0:j]})^{-1}\eta.
\end{equation}
Since $zB-A$ is bounded and boundedly invertible, the set of such
$\eta$'s is dense in $\ell^2$
and, therefore, due to~\eqref{unbound} we have that
formula~\eqref{srcon} is also valid for all
$\eta\in\ell^2$
implying the pointwise convergence
$m_{[0:j]}(z)\to m(z)$ for any
$z\in\dC\setminus\overline{W(A,B)}$. Now, the statement of the
theorem immediately follows from~\eqref{unbound} and the Vitali
theorem \cite[Section~5.21]{T}.
\end{proof}

Notice that the concept of a numerical range is valid for
operator-valued functions~\cite{Mar}. Thus the presented approach
can be also generalized to linear pencils proposed in~\cite{Lopez}.

\subsection{Uniform convergence of subsequences in neighborhoods}

We start by improving the pointwise convergence result of
Corollary~\ref{pointwise} generalizing \cite[Corollary~3]{AptKV}.
It was Ambroladze \cite[Corollaries~3 and~4]{AM} who first
observed that, for real Jacobi matrices, a quite dense subsequence
of convergents of \eqref{CF} converges uniformly in a neighborhood
of any element of the resolvent set. This result has been
generalized in \cite[Theorem~4.4]{B99} to the setting of complex
Jacobi matrices. We follow here the lines of the proof presented
in \cite[Theorem~4.7]{B01} since this allows to deduce in the next
subsection a result of convergence in capacity in bounded
connected components of $\rho(A,B)$.

A central observation in what follows is the following result
which for complex Jacobi matrices may be found in
\cite[Proposition~2.2]{B99}.

\begin{proposition}\label{normality}
   The family of rational functions
   \[
         u_n(z) = \frac{q_n(z)}{q_{n+1}(z)} =\frac{q_n^L(z)}{\alpha_n^L(z) q_{n+1}^L(z)} =\frac{q_n^R(z)}{\alpha_n^R(z) q_{n+1}^R(z)}
   \]
   is normal with respect to chordal metric on $\rho(A,B)$.
\end{proposition}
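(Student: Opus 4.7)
The plan is to apply Marty's theorem: the family $\{u_n\}$ of meromorphic functions is normal in the chordal metric on $\rho(A,B)$ if and only if the spherical derivatives
\[
u_n^\#(z) \;=\; \frac{|q_n'(z)\,q_{n+1}(z)-q_n(z)\,q_{n+1}'(z)|}{|q_n(z)|^2+|q_{n+1}(z)|^2}
\]
are locally uniformly bounded on $\rho(A,B)$. The goal is thus a uniform estimate $u_n^\#(z)\leq M_K$ on every compact $K\subset\rho(A,B)$.

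First I would establish uniformity of the geometric decay of Theorem~\ref{thChar}. Fix such a $K$ together with a slightly larger compact neighborhood $K'\subset\rho(A,B)$. Since the condition number $\kappa$ is continuous on $\rho(A,B)$ (Remark~\ref{rate}), the parameters $\gamma>0$ and $\delta\in(0,1)$ in the decay estimate \eqref{geometric} can be chosen uniform on $K'$, yielding $|r_n^L(z)|,|r_n^R(z)|\leq \gamma\delta^n$ for all $z\in K'$ and $n\geq 0$. Substituting into the Liouville--Ostrogradsky identity \eqref{Ostrogr1} and using the boundedness of $\alpha_n^L,\alpha_n^R$ on $K'$ (inherited from the boundedness of $A,B$) then forces
\[
\max\bigl(|q_n^L(z)|,\,|\alpha_n^L(z)q_{n+1}^L(z)|\bigr) \;\geq\; c\,\delta^{-n}, \qquad z\in K',
\]
so that after rewriting $|q_n|^2+|q_{n+1}|^2=\prod_{k<n}|\alpha_k^L(z)|^2\,(|q_n^L|^2+|\alpha_n^L q_{n+1}^L|^2)$ the denominator of $u_n^\#$ is bounded below by a constant times $\delta^{-2n}\,\prod_{k<n}|\alpha_k^L(z)|^2$ on $K'$.

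For the numerator $W_n:=q_n'q_{n+1}-q_nq_{n+1}'$, differentiating the three-term recurrence \eqref{r2rec} in $z$ gives the first-order linear recursion
\[
W_n \;=\; \alpha_{n-1}^L(z)\,\alpha_{n-1}^R(z)\,W_{n-1}-\beta_n'(z)\,q_n(z)^2+(\alpha_{n-1}^L\alpha_{n-1}^R)'(z)\,q_n(z)\,q_{n-1}(z),\qquad W_0=-\beta_0',
\]
whose closed-form solution is a multiple of $\prod_{k<n}\alpha_k^L\alpha_k^R$ plus a telescoping inhomogeneous sum. The homogeneous part, via Ostrogradsky \eqref{Ostrogr2} written in the form $\prod_{k<n}\alpha_k^L\alpha_k^R=r_nq_{n+1}-r_{n+1}q_n$ (with $r_n=q_n m-p_n$), contributes at most $|\beta_0'|\sqrt{2}\max(|r_n|,|r_{n+1}|)/\sqrt{|q_n|^2+|q_{n+1}|^2}=O(\delta^{2n})$ to $u_n^\#$ by Cauchy--Schwarz combined with the lower bound above, hence is harmless. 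The main obstacle, and where the technical work concentrates, is controlling the remaining telescoping sum. A crude estimate $|q_j(z)|\leq C^j$ from the recurrence would, inserted term by term, give $u_n^\#=O((C\delta)^{2n})$, which is useless unless $C\delta<1$; closing the argument requires a careful resummation that re-expresses each generic term $\prod_{k=j}^{n-1}\alpha_k^L\alpha_k^R\cdot q_j^2$ via Ostrogradsky in terms of products of the geometrically small $r_j^L,r_j^R$ and the appropriate $q_j,q_{j+1}$, thereby replacing the naive rate $C^j$ by the sharp rate $\delta^{-j}$ and yielding $u_n^\#(z)\leq M_K$ on $K$, which completes Marty's criterion.
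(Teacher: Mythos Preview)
Your Marty--theorem route is genuinely different from the paper's argument, and the hard step you flag (``careful resummation'') is where your proposal has a real gap.

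The paper never differentiates. Instead it bounds the two-point chordal difference quotient directly, via an operator identity: writing $q^L_{[0:n]}(x)=(q_0^L(x),\dots,q_n^L(x),0,\dots)$ and similarly $q^R_{[0:n]}(y)$, one has
\[
   \alpha_n^L(x)q_{n+1}^L(x)q_n^R(y)-\alpha_n^R(y)q_n^L(x)q_{n+1}^R(y)
   \;=\;(x-y)\,q^L_{[0:n]}(x)\,B\,q^R_{[0:n]}(y),
\]
while the truncated eigenvalue relation $q^L_{[0:n]}(x)(xB-A)=(0,\dots,0,\alpha_n^L(x)q_{n+1}^L(x),-\alpha_n^R(x)q_n^L(x),0,\dots)$ together with the resolvent bound gives
\[
   \|q^L_{[0:n]}(x)\|^2 \;\leq\; \|(xB-A)^{-1}\|^2\,(1+|\alpha_n^R(x)|^2)\,
   \bigl(|q_n^L(x)|^2+|\alpha_n^L(x)q_{n+1}^L(x)|^2\bigr),
\]
and similarly for $q^R_{[0:n]}(y)$. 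Dividing yields $\chi(u_n(x),u_n(y))\leq C_K\,|x-y|$ on compacts of $\rho(A,B)$, hence equicontinuity.

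Your scheme, after the rewriting $\prod_{k=j}^{n-1}\alpha_k^L\alpha_k^R\cdot q_j^2 = q_j^L q_j^R\,\prod_{k<n}\alpha_k^L\alpha_k^R$, reduces the bound on $u_n^\#$ precisely to controlling
\[
   \frac{\sum_{j\leq n}|q_j^L(z)q_j^R(z)|}{\sqrt{|q_n^L|^2+|\alpha_n^L q_{n+1}^L|^2}\,\sqrt{|q_n^R|^2+|\alpha_n^R q_{n+1}^R|^2}}.
\]
The Ostrogradsky relations you invoke give a \emph{lower} bound $\max(|q_n^L|,|\alpha_n^L q_{n+1}^L|)\geq c\,\delta^{-n}$, but no matching upper bound on the individual $|q_j^L|$, and nothing in your sketch produces one; the phrase ``replacing the naive rate $C^j$ by the sharp rate $\delta^{-j}$'' is exactly the unsupported step. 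What is actually needed is the $\ell^2$ estimate $\|q_{[0:n]}^L\|\leq C\sqrt{|q_n^L|^2+|\alpha_n^L q_{n+1}^L|^2}$ (and its $R$ counterpart), and that is precisely the paper's resolvent-based denominator inequality above. Once you have it, Cauchy--Schwarz finishes your Wronskian sum immediately --- but then you have essentially reproduced the paper's key ingredient, and the subsequent Marty argument is the infinitesimal version of the paper's two-point bound.
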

\begin{proof}
   We only have to show that $u_n$ is equicontinuous on the Riemann
   sphere. By the definition of the chordal metric we find for
   $x,y\in \rho(A,B)$
   \[
        \chi(u_n(x),u_n(y))
        = \frac{\Bigl|\alpha^L_n(x) q_{n+1}^L(x)q_n^R(y)-\alpha^R_n(y)
      q_{n}^L(x)q_{n+1}^R(y)\Bigr|}{
       \Bigl\| [q_n^L(x),\alpha_n^L(x)q_{n+1}^L(x)] \Bigr\| \,
       \Bigl\| [q_n^R(y),\alpha_n^R(y)q_{n+1}^R(y)] \Bigr\|}.
   \]
   In order to minorize the denominator, we write shorter as in the proof
   of Lemma~\ref{GreenPr}
    \[
        q^L_{[0:n]}=(q_{0}^L,\dots,q_{n}^L,0,0,\dots), \quad
        q^R_{[0:n]}=(q_{0}^R,\dots,q_{n}^R,0,0,\dots)^\top
    \]
    and observe that
    \[
      q^L_{[0:n]}(x)
      (xB-A) =\Bigl[
      \underbrace{0,...,0}_n,\alpha^L_n(x)q_{n+1}^L(x),-\alpha^R_n(x)q_{n}^L(x),0,...\Bigr],
    \]
    implying that
    \[
        \| q^L_{[0:n]}(x) \|^2 \leq \| (xB-A)^{-1} \|^2
        \,    (1+|\alpha_n^R(x)|^2) \,
     ( |q_n^L(x)|^2+|\alpha_n^L(x)q_{n+1}^L(x)|^2).
    \]
    Similarly,
    \[
      (yB-A)  q^R_{[0:n]}(y)
     =\Bigl[
      \underbrace{0,...,0}_n,\alpha^R_n(y)q_{n+1}^R(y),-\alpha^L_n(y)q_{n}^R(y),0,...\Bigr]^\top,
    \]
    implying that
    \[
        \| q^R_{[0:n]}(y) \|^2 \leq \| (yB-A)^{-1} \|^2
        \,    (1+|\alpha_n^L(y)|^2) \,
     ( |q_n^R(x)|^2+|\alpha_n^R(y)q_{n+1}^R(y)|^2).
    \]
    Finally,
    \begin{align*} &
      \Bigl(\alpha^L_n(x) q_{n+1}^L(x)q_n^R(y)-\alpha^R_n(y) q_{n}^L(x)q_{n+1}^R(y)\Bigr)
      \\&
      =
      q^L_{[0:n]}(x)
      \bigl[ (xB-A)-(yB-A) \bigr]
      q^R_{[0:n]}(y)
      =
      (x-y)
      q^L_{[0:n]}(x) B q^R_{[0:n]}(y),
    \end{align*}
    and a combination of these findings yields that
    $\chi(u_n(x),u_n(y))$ is bounded above by $|x-y|$ times a
    quantity which can be bounded for $x,y$ lying in compact
    subsets of $\rho(A,B)$.
\end{proof}

We are now prepared to generalize \cite[Theorem~4.4]{B99} to
linear pencils.

\begin{theorem}\label{neighborhoods}
     For any $\xi\in \rho(A,B)$ there exists a closed neighborhood
     $V\subset \rho(A,B)$ and $\epsilon_n\in \{0,1\}$ such that
     $m_{[0:n-1+\epsilon_n]}$ converges to $m$ uniformly in $V$.
\end{theorem}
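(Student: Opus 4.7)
The plan is to mirror the strategy of the proof of \cite[Theorem~4.7]{B01} in our pencil setting, combining the uniform geometric decay of the linearized errors $r_n^L,r_n^R$ from Theorem~\ref{thChar} with the chordal normality of the family $\{u_n=q_n/q_{n+1}\}$ established in Proposition~\ref{normality}.

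First I would pick a closed neighborhood $V_0\subset\rho(A,B)$ of $\xi$ on which the condition number $\kappa(z)$ is uniformly bounded; by Remark~\ref{rate} the constants $\gamma,\delta$ of Theorem~\ref{thChar} can then be chosen uniform on $V_0$, giving $|r_n^L(z)|,|r_n^R(z)|\le\gamma\delta^n$ for all $z\in V_0$ and $n\ge 0$. Boundedness of $A,B$ provides a uniform bound $M$ on $|\alpha_n^L(z)|,|\alpha_n^R(z)|$ on $V_0$. Applying the Liouville--Ostrogradsky identity~\eqref{Ostrogr1} pointwise in $z\in V_0$ together with the Cauchy--Schwarz inequality, exactly as in the proof of Corollary~\ref{pointwise}, yields the uniform lower bound
\[
  |q_n^L(z)|^2+|q_{n+1}^L(z)|^2\ge c\,\eta^{2n},\qquad z\in V_0,\; n\ge 0,
\]
with $\eta=1/\delta>1$ and $c=(M^2\gamma^2(1+\delta^2))^{-1}$, so that at every $z\in V_0$ at least one of two consecutive denominators $|q_n^L(z)|,|q_{n+1}^L(z)|$ has geometric size $\eta^n$.

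Next I would define $\epsilon_n\in\{0,1\}$ by the pointwise rule $|q_{n+\epsilon_n}^L(\xi)|=\max(|q_n^L(\xi)|,|q_{n+1}^L(\xi)|)$; this immediately gives the geometric convergence
\[
   m(\xi)-m_{[0:n-1+\epsilon_n]}(\xi)=\frac{r_{n+\epsilon_n}^L(\xi)}{q_{n+\epsilon_n}^L(\xi)}=O\bigl((\delta^2)^n\bigr)
\]
at the single point $\xi$. To promote this to uniform convergence on a closed neighborhood $V\subset V_0$ of $\xi$, I would invoke the chordal equicontinuity of $\{u_n\}$ supplied by Proposition~\ref{normality}: by shrinking $V_0$ to a sufficiently small $V$, the pointwise comparison between $|q_n^L|$ and $|q_{n+1}^L|$ at $\xi$ can be transferred to all of $V$, uniformly in $n$, up to a bounded multiplicative factor; combining with the above lower bound then yields $|q_{n+\epsilon_n}^L(z)|\ge c'\eta^n$ uniformly in $z\in V$ and $n\ge 0$, and dividing the decay estimate of $|r_{n+\epsilon_n}^L|$ by this lower bound gives $\sup_{z\in V}|m(z)-m_{[0:n-1+\epsilon_n]}(z)|=O((\delta^2)^n)$. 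The main obstacle lies precisely in this last passage: although $\epsilon_n$ is chosen from data at the single point $\xi$ and the index set $\{n:\epsilon_n=0\}$ may be irregular, Proposition~\ref{normality} controls the ratios $q_n/q_{n+1}$ and not the denominators individually, and the fact that the normality is phrased in the chordal rather than the Euclidean metric is what allows one to handle possible zeros and poles of $u_n$ accumulating in $\rho(A,B)$ while still transferring the $\xi$-based choice of $\epsilon_n$ into a uniform estimate on a neighborhood.
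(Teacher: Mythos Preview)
Your plan is correct and coincides with the paper's proof: both combine the uniform geometric decay of $r_n^L,r_n^R$ from Remark~\ref{rate}, the Liouville--Ostrogradsky identity~\eqref{Ostrogr1}, and the chordal normality of $u_n$ from Proposition~\ref{normality}, with $\epsilon_n$ chosen by comparing two consecutive denominators at the single point~$\xi$.

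The only difference is organizational, and it resolves precisely what you flag as the ``main obstacle''. Where you aim for a direct uniform lower bound on $|q_{n+\epsilon_n}^L|$ on $V$ and must argue that the $\xi$-based comparison transfers to a neighborhood, the paper sidesteps this by first rewriting the error as
\[
   \bigl|m(z)-m_{[0:n-1+\epsilon_n]}(z)\bigr|
   =\frac{\bigl|\alpha_n^L(z)^{\epsilon_n}\,r^L_{n+\epsilon_n}(z)\bigr|\,\sqrt{1+|v_n(z)|^2}}
          {\sqrt{|q_n^L(z)|^2+|\alpha_n^L(z)\,q_{n+1}^L(z)|^2}},
   \qquad v_n:=u_n^{\,1-2\epsilon_n}.
\]
Now the denominator is automatically large on all of $V_0$ by~\eqref{Ostrogr1} and Cauchy--Schwarz (no choice of $\epsilon_n$ required for this part), while the only $\epsilon_n$-dependent factor is $\sqrt{1+|v_n|^2}$. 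Since $|v_n(\xi)|\le 1$ by construction, chordal equicontinuity of $\{u_n\}$ gives a neighborhood $V$ on which $|v_n|\le 2$ for all $n$, and the estimate closes with an explicit bound $\le\sqrt{5}\,\sqrt{|r_n^L|^2+|\alpha_n^L r_{n+1}^L|^2}\,\sqrt{|r_n^R|^2+|\alpha_n^R r_{n+1}^R|^2}$. Your route works too, but this reshuffling makes the passage from $\xi$ to $V$ a one-line consequence of equicontinuity rather than a separate lower-bound argument.
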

\begin{proof}
    Let $v_n=u_n$ and $\epsilon_n=0$ if $|u_n(\xi)|<1$, or
    elsewhere $v_n=1/u_n$ and $\epsilon_n=1$. Then
    \[
        | m(z)- m_{[0:n-1+\epsilon_n]}(z) | =
        \Bigl|
        \frac{r^L_{n+\epsilon_n}(z)}{q^L_{n+\epsilon_n}(z)} \Bigr|
        =
        \Bigl|
        \frac{\alpha_n^L(z)^{\epsilon_n}r^L_{n+\epsilon_n}(z)
        \sqrt{1+|v_n(z)|^2}}{\sqrt{|q^L_{n}(z)|^2+|\alpha_n^L(z)q^L_{n+1}(z)|^2}}
        \Bigr| .
    \]
    Using the equicontinuity of the $u_n$ (and thus the $v_n$)
    established in Proposition~\ref{normality}, there exists a
    neighborhood $V$ of $\xi$ such that $|v_n(z)|\leq 2$ for all $z\in
    V$. Applying the Cauchy-Schwarz inequality to
    \eqref{Ostrogr1}, we obtain for $z\in V$ the upper bound
    \begin{align*}
          &|m(z)- m_{[0:n-1+\epsilon_n]}(z)|
          \leq
          \\&
          \sqrt{5} \sqrt{|r^L_{n}(z)|^2+|\alpha_n^L(z)r^L_{n+1}(z)|^2}
          \sqrt{|r^R_{n}(z)|^2+|\alpha_n^R(z)r^R_{n+1}(z)|^2}
    \end{align*}
    and the right-hand side tends to zero with a geometric rate
    according to Remark~\ref{rate}.
\end{proof}

One may construct examples with $B=I$ and  selfadjoint $A$ with
the spectrum $\mathbb C \setminus \rho(A,B)$ consisting of two
intervals being symmetric with respect to the origin $\xi=0$, and
$m_{[0:n-1]}$ has a pole at $\xi$ for all odd $n$. This shows that
we may not expect convergence for a subsequence denser than that
of Theorem~\ref{neighborhoods}.

\subsection{Convergence in capacity}

As explained  already before, in general one may not expect
convergence of $m_{[0:n]}$ to $m$ locally uniformly in $\rho(A,B)$
since there might be so-called spurious poles in $\rho(A,B)$. One
strategy of overcoming the problem of spurious poles is to allow
for exceptional small sets, as done in \cite[Theorem~3.1]{B99} for
complex Jacobi matrices where convergence in capacity is
established. We may generalize these findings for linear pencils,
where again we follow the lines of the alternate proof presented
in \cite[Theorem~4.7]{B01}.

\begin{theorem}\label{capacity}
     Let $V$ be a closed connected subset of $\rho(A,B)$
     with compact boundary,
     then there exist $\epsilon_n\in \{0,1\}$ such that
     $m_{[0:n-1+\epsilon_n]}$ converges to $m$ in capacity in $V$.

     If \eqref{NR0} is satisfied and $V \not\subset \overline{W(A,B)}$
     then we obtain convergence in capacity of the whole
     subsequence.
\end{theorem}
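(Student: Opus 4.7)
The plan is to adapt the proof of \cite[Theorem~4.7]{B01}, written for complex Jacobi matrices, to the present linear-pencil setting. The three ingredients are in place: the geometric decay $\limsup_n \|r_n^L\|_K^{1/n} < 1$ and $\limsup_n \|r_n^R\|_K^{1/n} < 1$ on any compact $K \subset \rho(A,B)$ (Theorem~\ref{thChar} together with Remark~\ref{rate}); the normality of the family $u_n = q_n/q_{n+1}$ on $\rho(A,B)$ in chordal metric (Proposition~\ref{normality}); and the effective estimate
\[
   |m(z) - m_{[0:n-1+\epsilon_n]}(z)| \le \sqrt{5}\,\sqrt{|r_n^L(z)|^2 + |\alpha_n^L(z)r_{n+1}^L(z)|^2}\,\sqrt{|r_n^R(z)|^2 + |\alpha_n^R(z)r_{n+1}^R(z)|^2}
\]
derived inside the proof of Theorem~\ref{neighborhoods}, valid whenever $|v_n(z)| \le 2$, with $v_n = u_n$ or $v_n = 1/u_n$ according to $\epsilon_n = 0$ or $\epsilon_n = 1$.

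For the first assertion I would start from a diagonal extraction. By Proposition~\ref{normality} and Montel's theorem, any subsequence of $u_n$ has a sub-subsequence $u_{n_k}$ converging in chordal metric locally uniformly on $V$ to a meromorphic limit $u^\infty$. Fix $\epsilon_{n_k} \in \{0,1\}$ by a global rule tied to $u^\infty$ (for instance $\epsilon_{n_k} = 0$ if $|u^\infty(\xi)| < 1$ at a fixed reference point $\xi \in V$ with $|u^\infty(\xi)| \neq 1$, and $\epsilon_{n_k} = 1$ otherwise). The bad set
\[
   F_k := \{z \in V : |v_{n_k}(z)| > 2\}
\]
is then asymptotically contained in a chordal neighborhood of $\{ u^\infty = 0 \}$ or $\{ u^\infty = \infty \}$ (according to the rule); by a standard potential-theoretic estimate---applied to the subharmonic functions $\frac{1}{n_k}\log|q_{n_k}|$ whose zero-divisor is controlled by Proposition~\ref{3.2} and \eqref{finite_det}---one obtains $\mathrm{cap}(F_k) \to 0$. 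Off $F_k$ the displayed bound delivers a uniform geometric decay of $|m - m_{[0:n_k-1+\epsilon_{n_k}]}|$, and convergence in capacity on $V$ along $n_k$ follows. A final diagonal extraction against all starting subsequences produces a single $\epsilon_n$ valid for the whole sequence.

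For the second assertion, Theorem~\ref{un_loc_con} gives uniform convergence of $m_{[0:n-1]}$ to $m$ on compact subsets of the nonempty open set $V \setminus \overline{W(A,B)}$, hence convergence in capacity there without any shift. On this open set the Liouville--Ostrogradsky identity \eqref{Ostrogr2} forces $|q_n q_{n+1}| \to \infty$ locally uniformly, so two chordal subsequential limits of $u_n$ agree on an open piece of $V \setminus \overline{W(A,B)}$ and therefore on all of $V$ by the identity principle for meromorphic functions; this pins down a single $u^\infty$ and makes the $\epsilon_n$-rule of the first argument eventually constant. Consequently the shift $\epsilon_n$ is asymptotically trivial and the whole unshifted sequence $m_{[0:n-1]}$ converges to $m$ in capacity on all of $V$.

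The main obstacle is the potential-theoretic capacity estimate $\mathrm{cap}(F_k) \to 0$: one must argue that, for a sequence of meromorphic functions converging in chordal metric, the preimages of small neighborhoods of $0$ and of $\infty$ shrink in capacity. This is the classical mechanism by which ``spurious poles'' of Pad\'e-type approximants are absorbed into sets of vanishing capacity; in the pencil setting it rests on weak-$*$ convergence of the normalized zero-counting measures of $q_n$ (whose supports lie in $\overline{W(A,B)}$ via Proposition~\ref{3.2}) and on the quasi-uniform convergence of the logarithmic potentials $\frac{1}{n}\log|q_n|$ on compact subsets of $\rho(A,B)$.
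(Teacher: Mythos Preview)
Your proposal has two genuine gaps.

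\textbf{First part: the capacity estimate.} You propose to obtain $\mathrm{cap}(F_k)\to 0$ via weak-$*$ convergence of the normalized zero-counting measures of $q_n$ and quasi-uniform convergence of $\frac{1}{n}\log|q_n|$. No such result is available here: this is Stahl--Totik type machinery that requires regularity assumptions absent in the general bounded-pencil setting, and in any case the zeros of $q_{n+1}$ lie in $W(A,B)$, which may well intersect $V\subset\rho(A,B)$. The paper does not touch zero-counting measures at all. Instead it fixes a reference point $\xi\in V$ and sets $\epsilon_n=0$ if $|u_n(\xi)|<1$, else $\epsilon_n=1$, so that $|v_n(\xi)|\le 1$ for all $n$; hence no partial limit of the normal family $(v_n)$ is the constant $\infty$. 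It then invokes a purely complex-analytic lemma (\cite[Lemma~2.4]{B99}): for a normal meromorphic family with no partial limit $\equiv\infty$ on a connected set there exist monic polynomials $\omega_n$ of \emph{bounded degree} with $\sup_n\max_V|\omega_n v_n|=:C<\infty$. This immediately traps $V_\epsilon=\{|v_n|\ge 1/\epsilon\}$ inside the lemniscate $\{|\omega_n|\le \epsilon C\}$, whose capacity $(\epsilon C)^{1/\deg\omega_n}$ tends to $0$. This also eliminates your subsequence/diagonal-extraction step: $\epsilon_n$ is chosen globally from the outset, and the argument goes through for the full sequence at once.

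\textbf{Second part.} Your route via \eqref{Ostrogr2} and the identity principle does not yield the claim. Even if $|q_nq_{n+1}|\to\infty$ locally uniformly on $V\setminus\overline{W(A,B)}$ (which \eqref{Ostrogr2} alone does not give without control on $\prod\alpha_k^L\alpha_k^R$), this says nothing about uniqueness of chordal subsequential limits of the \emph{ratio} $u_n$, so the conclusion that $\epsilon_n$ is ``eventually constant'' is unfounded. The paper's argument is direct: for $\xi\in V\setminus\overline{W(A,B)}$ one has the Cramer-type identity
\[
   u_n(\xi)=\frac{q_n(\xi)}{q_{n+1}(\xi)}=\langle(\xi B_{[0:n]}-A_{[0:n]})^{-1}e_n,e_n\rangle,
\]
and the uniform resolvent bound from the proof of Theorem~\ref{un_loc_con} gives $\sup_n|u_n(\xi)|<\infty$. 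Therefore the choice $\epsilon_n=0$ for all $n$ already ensures that no partial limit of $(v_n)=(u_n)$ equals $\infty$, and the $\omega_n$-lemma finishes the proof exactly as in the first part.
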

\begin{proof}
     Let again be $v_n=u_n^{1-2\epsilon_n}$ with $\epsilon_n\in \{
     0,1\}$ to be fixed later, and consider the sets
     \[
            V_\epsilon:=\{ z\in V : |v_n(z)|\geq 1/\epsilon \}.
     \]
     The arguments in the proof of Theorem~\ref{neighborhoods}
     show that $m_{[0:n-1+\epsilon_n]}$ converges to
     $m$ uniformly in $V\setminus V_\epsilon$.
     It remains thus to show that the capacity of $V_\epsilon$
     tends to zero for $\epsilon\to 0$.

     We choose $\epsilon_n$ in order to insure that the normal
     family $(v_n)_n$ does not have a partial limit being equal to
     the constant $\infty$ in the connected component of $\rho(A,B)$
     containing $V$: this can be done for instance by choosing a
     fixed $\xi\in V$ and to take $\epsilon_n$ as in
     Theorem~\ref{neighborhoods}, namely $\epsilon_n=0$ if
     $|u_n(\xi)|<1$, and elsewhere $\epsilon_n=1$. However,
     under the assumptions of the second part of the statement,
     by taking $\xi\in V \setminus \overline{W(A,B)}$
     it follows from the proof of Theorem~\ref{un_loc_con}
     that
     \[
         \sup_n |u_n(\xi)| = \sup_n | e_n^\top
         (\xi B_{[0:n]}-A_{[0:n]})^{-1} e_n | < \infty ,
     \]
     and hence here we may take the constant sequence
     $\epsilon_n=0$.

     It is now a well-known fact on normal families
     (see for instance \cite[Lemma~2.4]{B99} or the proof of
     \cite[Theorem~4.7]{B01}) that for normal meromorphic families $(v_n)_n$
     with partial limits different from $\infty$ there exist
     monic polynomials $\omega_n$ of bounded degree
     independent of $n$ such that
     \[
           C :=\sup_n \max_{z\in V} |\omega_n(z) v_n(z) | <
           \infty.
     \]
     This enables us to ensure that
     \[
          V_\epsilon \subset
          \{ z\in V : \frac{C}{|\omega_n(z)|}\geq 1/\epsilon \}
          \subset
                    \{ z\in \mathbb C : |\omega_n(z)|\leq \epsilon C
                    \}.
     \]
     Since the capacity increases for increasing sets, and since
     the capacity of the right-hand lemniscate can be explicitly
     computed to be $(\epsilon C)^{1/\deg \omega_n}$, the assertion is proved.
\end{proof}

\section{Biorthogonal rational functions and bi-diagonal decompositions}\label{sec_4}

In this section we give an operator interpretation of the
Darboux transformations of rational solutions of the
difference equations in question (for the orthogonal polynomials
case see~\cite{BM04}). In other words, we present a scheme for
constructing biorthogonal rational functions. As a special case,
we can construct orthogonal rational functions. Note that more
information about orthogonal rational functions can be found in~\cite{Bulth}.

\subsection{ $LU$-factorizations} Let us try to factorize the linear pencil $zB-A$ as
follows
\begin{equation}\label{LU}
zB-A = L(z) D(z) U(z)
\end{equation}
where $D(z)=\mbox{diag}(d_0(z),d_1(z),....)$ is a diagonal matrix, and
$L$, $U$ are bidiagonal matrices of the forms
\[      L = \left(\begin{array}{ccccc}
     1 & 0 & 0 & \cdots   \\
     -v_0^L & 1 & 0 &   \\
     0 & -v_1^L & 1 & \ddots  \\
     \vdots & \ddots & \ddots & \ddots
     \end{array}\right), \quad
     U = \left(\begin{array}{ccccc}
     1 & -v_0^R & 0 & \cdots  \\
     0 & 1 & -v_1^R & \ddots  \\
     0 & 0 & \ddots & \ddots  \\
     \vdots &   & \ddots & \ddots
     \end{array}\right).
\]
Comparing coefficients gives
\[
      - \alpha_n^L = - v_n^L d_{n} , \quad
      - \alpha_n^R = - v_n^R d_{n}  , \quad
      d_0=\beta_0,\quad
      \beta_n=d_{n} + \frac{\alpha_{n-1}^L
      \alpha_{n-1}^R}{d_{n-1}}.
\] Thus $d_0(z)=q_1(z)/q_0(z)$ by \eqref{r2initial}, and by
recurrence using \eqref{r2rec} one deduces that
\[
d_n(z)=\frac{q_{n+1}(z)}{q_n(z)},\quad v_n^L(z) =
\frac{\alpha_n^L(z)q_{n}(z)}{q_{n+1}(z)}, \quad
       v_n^R(z) = \frac{\alpha_n^R(z) q_{n}(z)}{q_{n+1}(z)}.
\]
Hence, the decomposition \eqref{LU} exists if and only iff
$q_n(z)\neq 0$ for all $n \geq 0$. In particular, from
Proposition~\ref{3.2} we obtain existence of such a factorization
for $z\not\in W(A,B)$.

 The decomposition~\eqref{LU} gives us the
possibility to define Christoffel type transformations.
\begin{proposition}\label{Chr}
   Under assumption~\eqref{assumption2}, let $x_0\in \Gamma_{ext}$
   such that the decomposition (\ref{LU}) exists for $z=x_0$.
   Define for $n \geq 0$ the functions
   rational in $x$
   \[
            Q_n^L(x_0,x)=\frac{q_n^L(x)-v_{n}^L(x_0) q_{n+1}^L(x)}{x_0-x}
            , \,
            Q_n^R(x_0,x)=\frac{q_n^R(x) - v_{n}^R(x_0) q_{n+1}^R(x)}{x_0-x}.
   \]
   Then we have the orthogonality relations
\begin{equation}\label{Chr_Or}
  \frac{1}{2\pi i} \int_{\Gamma}
  Q_j^L(x_0,x)Q_k^R(x_0,x)(x_0-x)m(x)dx=\delta_{j,k}/d_j(x_0),
\end{equation}
where $\delta_{j,k}$ is the Kronecker delta.
\end{proposition}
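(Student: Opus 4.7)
The plan is to evaluate
\[
I_{j,k} := \frac{1}{2\pi i}\int_{\Gamma} Q_j^L(x_0,\zeta) Q_k^R(x_0,\zeta)(x_0-\zeta) m(\zeta)\, d\zeta
\]
by expanding the numerator of $Q_j^L(x_0,\zeta) Q_k^R(x_0,\zeta)(x_0-\zeta)$ into four monomials of the form $q_a^L(\zeta) q_b^R(\zeta)/(x_0-\zeta)$. By Lemma~\ref{integral} applied at $z=x_0 \in \Gamma_{ext}$, each such integral evaluates to the resolvent entry $R(x_0)_{b,a}$, giving
\[
I_{j,k} = R(x_0)_{k,j} - v_j^L(x_0) R(x_0)_{k,j+1} - v_k^R(x_0) R(x_0)_{k+1,j} + v_j^L(x_0) v_k^R(x_0) R(x_0)_{k+1,j+1}.
\]
The crucial algebraic identity driving the whole argument is
\[
q_j^L(x_0) - v_j^L(x_0) q_{j+1}^L(x_0) = 0, \qquad q_k^R(x_0) - v_k^R(x_0) q_{k+1}^R(x_0) = 0,
\]
which follows at once from $v_j^L(x_0) = \alpha_j^L(x_0) q_j(x_0)/q_{j+1}(x_0)$ combined with the definitions \eqref{def_R}--\eqref{def_L}.

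For $j < k$ the four index pairs $(k,j), (k,j+1), (k+1,j), (k+1,j+1)$ have first coordinate at least the second, so Theorem~\ref{thChar} yields $R(x_0)_{a,b} = r_a^R(x_0) q_b^L(x_0)$ in each of the four terms, and the sum factors as
\[
I_{j,k} = \bigl[r_k^R(x_0) - v_k^R(x_0) r_{k+1}^R(x_0)\bigr] \bigl[q_j^L(x_0) - v_j^L(x_0) q_{j+1}^L(x_0)\bigr] = 0.
\]
The case $j > k$ is dual, using instead $R(x_0)_{a,b} = q_a^R(x_0) r_b^L(x_0)$ and producing a vanishing $R$-bracket.

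On the diagonal $j = k$ the two mixed entries force opposite representations, namely $R(x_0)_{j,j+1} = q_j^R r_{j+1}^L$ but $R(x_0)_{j+1,j} = r_{j+1}^R q_j^L$. Taking $R(x_0)_{j,j} = q_j^R r_j^L$ and $R(x_0)_{j+1,j+1} = r_{j+1}^R q_{j+1}^L$, the four summands group pairwise to
\[
I_{j,j} = q_j^R(x_0) \bigl[r_j^L(x_0) - v_j^L(x_0) r_{j+1}^L(x_0)\bigr] - v_j^R(x_0) r_{j+1}^R(x_0) \bigl[q_j^L(x_0) - v_j^L(x_0) q_{j+1}^L(x_0)\bigr],
\]
whose second bracket vanishes by the identity above. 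Using $r_n^L = q_n^L m - p_n^L$ together with the same identity, the remainder collapses to $-q_j^R(x_0)\bigl[p_j^L(x_0) - v_j^L(x_0) p_{j+1}^L(x_0)\bigr]$. Substituting the definitions \eqref{def_L} and the explicit value of $v_j^L(x_0)$, the bracketed expression becomes $[p_j(x_0) q_{j+1}(x_0) - q_j(x_0) p_{j+1}(x_0)]/[q_{j+1}(x_0) \prod_{k=0}^{j-1}\alpha_k^L(x_0)]$, and the Liouville-Ostrogradsky formula \eqref{Ostrogr2} evaluates the numerator to $-\prod_{k=0}^{j-1}\alpha_k^L(x_0)\alpha_k^R(x_0)$. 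Combined with $q_j^R(x_0) = q_j(x_0)/\prod_{k=0}^{j-1}\alpha_k^R(x_0)$ this collapses exactly to $q_j(x_0)/q_{j+1}(x_0) = 1/d_j(x_0)$.

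The only real obstacle is the diagonal bookkeeping: one must pick opposite representations for $R(x_0)_{j,j+1}$ and $R(x_0)_{j+1,j}$, group the four terms so that exactly one bracket is killed by the identity, and finish with the Liouville-Ostrogradsky determinant to land on the stated normalization $1/d_j(x_0)$. The off-diagonal vanishing then comes for free from a single factorization.
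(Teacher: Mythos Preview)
Your proof is correct and follows essentially the same approach as the paper: expand $I_{j,k}$ into four resolvent entries via Lemma~\ref{integral}, factor using Theorem~\ref{thChar} and the identity $q_j^L(x_0)-v_j^L(x_0)q_{j+1}^L(x_0)=0$ for the off-diagonal case, and finish the diagonal case with Liouville--Ostrogradsky. The only cosmetic difference is that on the diagonal the paper groups so that $q_j^L$ survives (killing the bracket via the $R$-identity) and appeals to \eqref{Ostrogr1}, whereas you make the symmetric choice with $q_j^R$ surviving and appeal to \eqref{Ostrogr2}; the two computations are mirror images of one another.
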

 \begin{proof}
    Denote by $I_{j,k}$ the expression on the left-hand side of
    \eqref{Chr_Or}. We only consider the case $0 \leq j\leq k$, the
    other case follows by symmetry.
By definition of $\cQ_j^L(x_0,x)$ and $\cQ_k^R(x_0,x)$ and by
Lemma~\ref{integral} we obtain \begin{eqnarray*}  I_{jk}&=&
R(x_0)_{k,j} - v_{k}^R(x_0) R(x_0)_{k+1,j} \\& &-v_{j}^L(x_0)
R(x_0)_{k,j+1} + v_{j}^L(x_0) v_{k}^R(x_0) R(x_0)_{k+1,j+1}.
\end{eqnarray*}
For $j<k$, we may apply Theorem~\ref{thChar} and obtain after
factorization
\[
I_{jk}
=(q_j^L(x_0)-v_j^L(x_0)q_{j+1}^L(x_0))(r_k^R(x_0)-v_k^R(x_0)r_{k+1}^R(x_0))=0
\]
by definition of $v_j^L(x_0)$. If $j=k$, we get slightly different
formulas from Theorem~\ref{thChar} and obtain after some
simplifications
\[
I_{jj}=q_j^L(x_0)(r_j^R(x_0)-v_j^R(x_0)r_{j+1}^R(x_0))=
\frac{q_j(x_0)}{q_{j+1}(x_0)}=\frac{1}{d_j(x_0)},
\]
where in the second equality we have applied \eqref{Ostrogr1}.
\end{proof}
\begin{remark}
Clearly, the functions $\alpha_0^L\dots\alpha_{n}^L
Q_n^L(x_0,\cdot)$ and
$\alpha_0^R\dots\alpha_{n}^RQ_n^R(x_0,\cdot)$ are polynomials of
degree $\leq n$.\qed
\end{remark}
Proposition~\ref{Chr} tells us  that the Christoffel
transformation leads to multiplication of the biorthogonality
measure $m(x)$ by a linear factor $(x_0-x)$. This process can be
repeated. Indeed, after the Christoffel transformation we again
obtain a pair of biorthogonal rational functions satisfying a
generalized eigenvalue equation with a new pair
of the Jacobi matrices $\tilde A$, $\tilde B$~\cite{Zhe_BRF}. We can thus apply the
Christoffel transformation to these new functions factorizing the
linear pencil $x_1 \tilde B - \tilde A$ in a similar way as in
\re{LU}. Then the weight function $m(x)(x_0-x)$ is multiplied by a
linear factor $x_1-x$ with $x_1 \ne x_0$. Repeating this process,
let us introduce the polynomial $\pi_N(x) =(x_0-x)(x_1-x)\dots
(x_{N-1}-x)$ with $x_i \ne x_j$, for $i \ne j$ and construct the
functions \[ Q_n^L(x_0,x_1, \dots, x_{N-1}; x) =
\frac{A_{n,N}^L(x)}{\pi_N(x) B_{n,N}}
 \] where
\begin{eqnarray}\label{Zhed_cor_1}
 && A_{n,N}^L(x) = \det \left[ \begin{array}{cccc} q_n^L(x) &
q_{n+1}^L(x) & \dots & q_{n+N}^L(x) \\ q_n^L(x_0) & q_{n+1}^L(x_0)
& \dots & q_{n+N}^L(x_0) \\ \dots & \dots & \dots & \dots\\
q_n^L(x_{N-1}) & q_{n+1}^L(x_{N-1}) & \dots &
q_{n+N}^L(x_{N-1})\end{array} \right ], \label{det_A}
\\&&
B_{n,N}^L = \det \left [ \begin{array}{cccc} q_{n+1}^L(x_0) &
q_{n+2}^L(x_0) & \dots & q_{n+N}^L(x_0) \\ \dots & \dots & \dots &
\dots\\ q_{n+1}^L(x_{N-1}) & q_{n+2}^L(x_{N-1}) & \dots &
q_{n+N}^L(x_{N-1})\end{array} \right ], \label{det_B}
\end{eqnarray}
and similar expressions for $Q_n^R(x_0,...,x_{n-1};x)$,
$A_{n,N}^R(x)$ and $B_{n,N}^R(x)$. Note that if two or more of the
parameters $x_i$ coincide, say $x_1=x_0$, then we may apply a
simple limiting process leading to appearance of derivatives in
corresponding determinants. Then it is easy to show that these
functions satisfy the biorthogonality relation
\begin{eqnarray} && \nonumber
  \frac{1}{2\pi i} \int_{\Gamma}
  Q_j^L(x_0,x_1, \dots, x_{N-1}; x)Q_k^R(x_0,x_1, \dots, x_{N-1}; x)\pi_j(x) m(x)dx
  \\&& \label{Chr_j_Or}
   =\delta_{j,k}/d_j(x_0,x_1, \dots, x_{j-1}),
\end{eqnarray}
with some constants $d_j(x_0,x_1, \dots, x_{j-1})$. Formulas
\eqref{Zhed_cor_1}-\re{Chr_j_Or} are a direct generalization of the Christoffel
formula for the orthogonal polynomials, see, e.g., \cite[\S
2.5]{Sz}.

\subsection{ $UL$-decomposition} For $z\in \rho(A,B)$, let us find
a decomposition
\begin{equation}\label{UL}
zB-A = U(z) D(z) L(z)
\end{equation}
with a diagonal matrix $D(z)=\mbox{diag}(d_0(z),d_1(z),....)$, and
bidiagonal matrices
\[
     U = \left(\begin{array}{ccccc}
     1 & -u_0^R & 0 & \cdots  \\
     0 & 1 & -u_1^R & \ddots  \\
     0 & 0 & \ddots & \ddots  \\
     \vdots &   & \ddots & \ddots
     \end{array}\right),\quad
   L = \left(\begin{array}{ccccc}
     1 & 0 & 0 & \cdots   \\
     -u_0^L & 1 & 0 &   \\
     0 & -u_1^L & 1 & \ddots  \\
     \vdots & \ddots & \ddots & \ddots
     \end{array}\right).
\]
By comparing coefficients we have
\[
      - \alpha_n^L = - u_n^L d_{n+1} , \,
      - \alpha_n^R = - u_n^R d_{n+1}  , \,
      \beta_n=d_{n} + u_n^L u_n^R d_{n+1} =d_{n} + \frac{\alpha_n^L
      \alpha_n^R}{d_{n+1}}.
\] It turns out that this decomposition is unique after fixing an
arbitrary value for $d_0$. Indeed, let $y_{-1}=d_0,y_0=1$, and
consider $y_n$ defined by the recurrence relation~ (\ref{r2rec}). Then it
follows that
\[
    d_n= \frac{\alpha^L_{n-1}\alpha^R_{n-1}y_{n-1}}{y_n},
 \quad u_n^L = \frac{y_{n+1}}{\alpha_n^Ry_n}
      , \quad u_n^R= \frac{y_{n+1}}{\alpha_n^Ly_n},
\]
where from \eqref{r2initial} we learn that
\begin{equation} \label{y_def}
       y_n(z) = (1-m(z)d_0(z)) q_n(z)+d_0(z) r_n(z)
          =q_n(z)-d_0(z) p_n(z)
\end{equation}
for all $n\geq -1$. Thus the decomposition (\ref{UL}) exists if
and only if $d_0(z)\in \mathbb C$ is chosen such that $y_n(z)\neq
0$ and $\alpha_n^L(z)\alpha_n^R(z)\neq 0$ for all $n \geq 0$. For
the special case $d_0(z)=0$ we may compare with the $LU$
decomposition of the preceding subsection and get $u_n^R=1/v_n^L$
and similarly $u_n^L=1/v_n^R$. Also, for the special case
$m(z)d_0(z)=1$ one may show that $y_n(z)=d_0(z)r_n(z)\neq 0$
provided that $z\not\in W(A,B)$.

Suppose that the above factorization exists for $z=x_0$, and
define the Geronimus type transformations by the following
formulas \[
            \cQ_n^L(x_0,x)=q_n^L(x) - u_{n-1}^R(x_0) q_{n-1}^L(x)
            , \,
            \cQ_n^R(x_0,x)=q_n^R(x) - u_{n-1}^L(x_0) q_{n-1}^R(x),
   \]
   and $\cQ_0^L(x_0,x)=\cQ_0^R(x_0,x)=1$.

\begin{proposition}\label{UL_poly}
   Under assumption~\eqref{assumption2}, let $x_0\in \Gamma_{ext}$, $d_0(x_0)\neq 0$, such that
   the above factorization~\eqref{UL} exists for $z=x_0$.
   Consider for
    $g\in \mathcal C(\Gamma)$ the linear functional
    \[
           \widetilde \sS(g)=\frac{1}{2\pi i} \int_\Gamma g(\zeta)
           \frac{m(\zeta)}{x_0-\zeta}
           \, d\zeta + \Bigl( \frac{1}{d_0(x_0)}-m(x_0) \Bigr) g(x_0),
    \]
    then we obtain for $j \neq k$ the biorthogonality relations
   \begin{equation} \label{Ger_Or}
        \widetilde \sS\Bigl(\cQ_j^R(\cdot,x_0)\cQ_k^L(\cdot,x_0)\Bigr)=0,
        \quad
        \widetilde
        \sS\Bigl(\cQ_j^R(\cdot,x_0)\cQ_j^L(\cdot,x_0)\Bigr)=\frac{1}{d_j(x_0)}.
   \end{equation}
\end{proposition}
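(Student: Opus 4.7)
The plan is to follow the same template as the proof of Proposition~\ref{Chr}: expand the product $\cQ_j^R \cQ_k^L$ into its four constituent terms $q_a^R q_b^L$, compute $\widetilde{\sS}(q_a^R q_b^L)$ explicitly for each, and then show that the resulting linear combination factors. The crucial new input compared with Proposition~\ref{Chr} is the defining property of the factors $u_{n-1}^L(x_0), u_{n-1}^R(x_0)$ in the $UL$-decomposition, namely the vanishing of a certain linear combination of consecutive $y$-values.

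I would first establish the analogue of what Lemma~\ref{integral} gave us in the $LU$ case. Since $x_0 \in \Gamma_{ext}$, the Cauchy-type formula $g(x_0) = \frac{1}{2\pi i} \int_\Gamma \frac{g(\zeta)}{x_0 - \zeta} d\zeta$ together with Lemma~\ref{integral} yields
\[
   \frac{1}{2\pi i} \int_\Gamma q_j^R(\zeta) q_k^L(\zeta) \frac{m(\zeta)}{x_0 - \zeta} d\zeta = R(x_0)_{j,k},
\]
so that with Theorem~\ref{thChar} and the definition of $\widetilde{\sS}$ one obtains, upon substituting $r_j^R = q_j^R m - p_j^R$ and $r_k^L = q_k^L m - p_k^L$ and collecting the $m(x_0)$-terms,
\[
   T_{j,k} := \widetilde{\sS}(q_j^R q_k^L) =
   \begin{cases}
   q_j^R(x_0)\, y_k^L(x_0)/d_0(x_0) & \text{if } j \le k, \\
   y_j^R(x_0)\, q_k^L(x_0)/d_0(x_0) & \text{if } j \ge k,
   \end{cases}
\]
where $y_n^L = (q_n - d_0 p_n)/\prod_{i=0}^{n-1}\alpha_i^L = q_n^L - d_0 p_n^L$ and similarly for $y_n^R$, as suggested by the formula $y_n = q_n - d_0 p_n$ in \eqref{y_def}. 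The two expressions agree at $j=k$ because $q_j^R p_j^L = p_j^R q_j^L$.

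For $j \neq k$, expanding gives
\[
\widetilde{\sS}(\cQ_j^R \cQ_k^L) = T_{j,k} - u_{j-1}^L(x_0) T_{j-1,k} - u_{k-1}^R(x_0) T_{j,k-1} + u_{j-1}^L(x_0) u_{k-1}^R(x_0) T_{j-1,k-1}.
\]
Assume $j<k$ so every pair falls in the case $a\le b$; then the right-hand side factors as
\[
   \frac{1}{d_0(x_0)} \bigl( q_j^R - u_{j-1}^L q_{j-1}^R \bigr)(x_0) \cdot \bigl( y_k^L - u_{k-1}^R y_{k-1}^L \bigr)(x_0),
\]
which vanishes because the definitions $u_{k-1}^R(x_0) = y_k(x_0)/(\alpha_{k-1}^L(x_0) y_{k-1}(x_0))$ give $y_k^L(x_0) - u_{k-1}^R(x_0) y_{k-1}^L(x_0) = 0$. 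The case $j > k$ is symmetric, with the companion identity $y_j^R - u_{j-1}^L y_{j-1}^R = 0$ doing the work.

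The remaining step, which I expect to be the only delicate one, is the diagonal case $j = k$. Here the four $T_{a,b}$-terms do not factor cleanly, because one must use $T_{j,j-1} = y_j^R q_{j-1}^L/d_0$ (from $j \ge k$) while $T_{j-1,j} = q_{j-1}^R y_j^L/d_0$ (from $j \le k$). Writing the sum as the analogous factored product plus a correction, the two vanishing identities above collapse the factored piece to zero, leaving
\[
   \widetilde{\sS}(\cQ_j^R \cQ_j^L) = \frac{u_{j-1}^R(x_0)}{d_0(x_0)} \bigl( q_j^R y_{j-1}^L - y_j^R q_{j-1}^L \bigr)(x_0).
\]
Substituting $y_n = q_n - d_0 p_n$ reduces the bracket to $d_0 (p_j^R q_{j-1}^L - q_j^R p_{j-1}^L)$, and the Liouville--Ostrogradsky relation \eqref{Ostrogr2}, together with the scalings \eqref{def_R}--\eqref{def_L}, evaluates this Wronskian to $1/\alpha_{j-1}^R(x_0)$. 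Inserting $u_{j-1}^R(x_0) = y_j(x_0)/(\alpha_{j-1}^L(x_0) y_{j-1}(x_0))$ and recognising $d_j(x_0) = \alpha_{j-1}^L(x_0)\alpha_{j-1}^R(x_0) y_{j-1}(x_0)/y_j(x_0)$ from the $UL$-recursion gives the desired value $1/d_j(x_0)$. The case $j = 0$ is consistent because $\cQ_0^L = \cQ_0^R = 1$ and $T_{0,0} = 1/d_0(x_0)$.
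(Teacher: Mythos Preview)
Your proof is correct. The computation of $T_{j,k}=\widetilde{\sS}(q_j^R q_k^L)$ via Lemma~\ref{integral} and Theorem~\ref{thChar}, the factorization for $j\neq k$, and the diagonal evaluation through \eqref{Ostrogr2} all check out; the boundary cases $j=0$ or $k=0$ fall out by simply dropping the corresponding $u$-terms.

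The paper's argument is a close cousin but organized differently. Instead of expanding both $\cQ_j^R$ and $\cQ_k^L$ into four $T_{a,b}$-terms, the paper applies only the $L(x_0)$-factor on the left, i.e.\ it computes the $(j,k)$ entry of $L(x_0)(x_0B-A)^{-1}$, recognises this as $\frac{1}{2\pi i}\int_\Gamma \cQ_j^R(\zeta,x_0)\,q_k^L(\zeta)\,\frac{m(\zeta)}{x_0-\zeta}\,d\zeta$, and then shows directly that $r_j^R-u_{j-1}^L r_{j-1}^R=(m(x_0)-1/d_0(x_0))\,\cQ_j^R(x_0,x_0)$, which is exactly your identity $y_j^R-u_{j-1}^L y_{j-1}^R=0$ rewritten. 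This yields $\widetilde{\sS}(\cQ_j^R q_k^L)=0$ for all $k<j$, and a span argument ($\{\cQ_k^L\}_{k<j}$ and $\{q_k^L\}_{k<j}$ generate the same space) then gives the off-diagonal vanishing; the diagonal term is obtained as $\widetilde{\sS}(\cQ_j^R q_j^L)$. Your two-sided expansion is perhaps slightly more symmetric and avoids the span argument, while the paper's one-sided version keeps the operator interpretation $L(x_0)(x_0B-A)^{-1}$ visible and actually proves the marginally stronger statement that $\cQ_j^R$ is $\widetilde{\sS}$-orthogonal to every $q_k^L$ with $k<j$.
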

\begin{proof}
We only look at the case $j\geq k\geq 0$, the other case follows
by symmetry.  Let us compute the $(j,k)$th entry of the product
$L(x_0) (x_0B-A)^{-1}$ (which formally is perhaps expected to be
equal to the upper triangular matrix $D(x_0)^{-1}U(x_0)^{-1}$ but
turns out to be a full matrix). Using Lemma~\ref{integral}
and observing that $x_0\in \Gamma_{ext}$ we get for $j >0$,
\begin{eqnarray*} &&
    \langle L(x_0) (x_0B-A)^{-1} e_k, e_j \rangle
    \\&=&
    \langle (x_0B-A)^{-1} e_k, e_j \rangle
    - u_{j-1}^L(x_0)
    \langle (x_0B-A)^{-1} e_k, e_{j-1} \rangle
    \\&=& \frac{1}{2\pi i} \int_{\Gamma}
    \cQ_j^R(\zeta,x_0) q_k^L(\zeta) \frac{m(\zeta)}{x_0-\zeta}
    d\zeta.
\end{eqnarray*}
Note that the same conclusion is true for $j=0$. If now
$j>k$, we may rewrite the last expression as
\begin{eqnarray*} &&
    \langle L(x_0) (x_0B-A)^{-1} e_k, e_j \rangle
    = 
    \Bigl( r_j^R(x_0)
    - u_{j-1}^L(x_0) r_{j-1}^R(x_0)\Bigr) q_k^L(x_0).
\end{eqnarray*}
Noticing that $u_{j-1}^L(x_0)=y_j^R(x_0)/y_{j-1}^R(x_0)$ with
$y_n^R=y_n/(\alpha_0^R...\alpha_{n-1}^R)$, we get according to
\eqref{y_def}
\begin{eqnarray*} &&
     r_j^R(x_0)
    - u_{j-1}^L(x_0) r_{j-1}^R(x_0)
    \\&=&
     r_j^R(x_0)-\frac{y_j^R(x_0)}{d_0(x_0)}
    - u_{j-1}^L(x_0) \left( r_{j-1}^R(x_0)
    -\frac{y_{j-1}^R(x_0)}{d_0(x_0)}\right)
    \\&=& \Bigl(m(x_0) - \frac{1}{d_0(x_0)} \Bigr)
    \cQ_j^R(x_0,x_0) .
\end{eqnarray*}
Thus for all $g\in \mbox{span}\{q_k^L:k=0,...,j-1\} = \mbox{span}
\{\cQ^L_k(\cdot,x_0):k=0,...,j-1\}$ we conclude that $\widetilde
\sS(\cQ_j^R(\cdot,x_0) g)=0$, and, by definition of $\widetilde
\sS$ and Theorem~\ref{thChar},
\begin{eqnarray*} &&
     \widetilde \sS(\cQ_j^R(\cdot,x_0)\cQ_j^L(\cdot,x_0))
     =
     \widetilde \sS(\cQ_j^R(\cdot,x_0)q_j^L)
    \\&=&
    \langle L(x_0) (x_0B-A)^{-1} e_j, e_j \rangle
    + \Bigl(\frac{1}{d_0(x_0)} -m(x_0)\Bigr)
    \cQ_j^R(x_0,x_0)q_j^L(x_0)
    \\&=&
    \cQ_j^R(x_0,x_0) \Bigl( r_j^L(x_0)
    + (\frac{1}{d_0(x_0)} -m(x_0))
    q_j^L(x_0)\Bigr)
    \\&=&
    \cQ_j^R(x_0,x_0) \frac{y_j(x_0)}{d_0(x_0)\alpha_0^L(x_0)...\alpha_{j-1}^L(x_0)}
    =\frac{1}{d_j(x_0)},
\end{eqnarray*}
the last claim being evident for $j=0$, and for $j>0$ according to
\eqref{Ostrogr2} and \eqref{y_def} \begin{eqnarray*}
    \frac{\cQ_j^R(x_0,x_0)}{d_0(x_0)\alpha_0^L(x_0)...\alpha_{j-1}^L(x_0)}
    &=&
    \frac{p_j q_{j-1} - p_{j-1} q_j}{y_{j-1} \alpha_0^L...\alpha_{j-1}^L\alpha_0^R...\alpha_{j-1}^R}
    \\&=&
      \frac{1}{y_{j-1}\alpha_{j-1}^L\alpha_{j-1}^R} = \frac{y_j(x_0)}{d_j(x_0)}
\end{eqnarray*}
where for simplicity we have dropped in the intermediate
expression the argument $x_0$.
\end{proof}

\begin{remark}
Formula~\eqref{Ger_Or} means that the (bi)orthogonality measure
$\tilde m(x)$ for the transformed rational functions
$\cQ_j^L(x_0,x),\cQ_k^R(x_0,x)$ consists of a regular part
$m(x)/(x_0-x)$ on $\Gamma$ plus a point mass at $x=x_0$, with the mass
$M_0 =2 \pi i (1/d_0(x_0)-m(x_0))$, where $d_0(x_0)\neq 0$ is a free
parameter. A similar situation occurs in the case of ordinary
orthogonal polynomials, where the additional point mass in the
Geronimus transformation can be freely chosen \cite{BM04}.\qed
\end{remark}

\begin{remark}\label{ULB}
   Proposition~\ref{UL_poly} for $x_0\to \infty$ (after multiplication with $x_0$) has been considered before
   in~\cite[Theorem~2.2]{DZ09}.\qed
\end{remark}

\section{An example}\label{sec_5}

In order to illustrate the above findings and to give a
non-trivial example, we study in this section the properties of a
symmetric linear pencils related to a Markov function of the form
\[
     \varphi(z)=\int_a^b \frac{d\mu(t)}{z-t},
\] with a probability measure $\mu$ with a support included in
some compact real interval $[a,b]$. Here, the entries
$A_{j,k},B_{j,k}$ of the linear pencil $zB-A$ for symmetric
interpolation points \begin{equation} \label{markov_ass}
    z_1=\overline{z_2}, z_3=\overline{z_4},... \in \mathbb C
    \setminus [a,b],\quad\dist(z_j,[a,b])>\delta>0,\quad j\in\dN,
\end{equation}
are obtained by developing $\varphi$ into an even part of a Thiele
continued fraction. Before going into details, we recall from the
beginning of \S\ref{sec_1} the special case of
   interpolation at
   infinity $z_1=z_2=z_3=...=\infty$. Here the expansion of $\varphi$
   into a $J$-fraction generates a pencil $zB-A$ with a
   real Jacobi matrix $A$ and with $B=I$ the identity, and it is known
   that the spectrum of the linear pencil
   $zB-A$ (and thus of $A$) is given by the support of the
   underlying measure $\mu$,
   and the numerical range equals to its convex hull $[a,b]$. The aim of
   this section is to show that these properties remain valid for
   more general sets of interpolation points.

Returning to the task of developing $\varphi$ into the continued
fraction in question, the following result has been shown in \cite[Lemma~3.1
and Remark~3.3]{DZ09}, by making the link with Nevalinna
functions. The proof given in~\cite{DZ09} uses the assumption $|\Im z_j|\ge\delta>0$
and it can be immediately generalized to our setting.

\begin{proposition}\label{thiele}
   Suppose that~\eqref{markov_ass} holds, and that $\mu$ has an infinite number of points of
   increase such that $\varphi$ is not a rational function. Then
   there exist probability measures
   $\mu_0=\mu,\mu_1,\mu_2,...$ such that, for all $j
   \geq 0$,
   \[
           \varphi_j(z)= \frac{1}{zB_{j,j}-A_{j,j} - B_{j+1,j}^2
           (z-z_{2j+1})(z-z_{2j+2})\varphi_{j+1}(z)}
   \]
   with the Markov functions
   \[
        \varphi_j(z)=\int_a^b \frac{d\mu_j(t)}{z-t},
   \]
   and the real numbers
   \[
       B_{j,j} =
       \frac{\int_a^b \frac{d\mu_j(t)}{|z_{2j+1}-t|^2}}
            {\left|\int_a^b \frac{d\mu_j(t)}{z_{2j+1}-t} \right|^2 }
            > 1, \,
       A_{j,j} =
       \frac{\int_a^b \frac{t \, d\mu_j(t)}{|z_{2j+1}-t|^2}}
            {\left|\int_a^b \frac{d\mu_j(t)}{z_{2j+1}-t} \right|^2 }
            , \, B_{j+1,j} = \sqrt{B_{j,j}-1}>0 .
   \]
\end{proposition}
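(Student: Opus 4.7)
I would proceed by induction on $j$, with the base case $j=0$ given by hypothesis. Assuming $\varphi_j(z)=\int_a^b d\mu_j(t)/(z-t)$ for a probability measure $\mu_j$ on $[a,b]$ with infinitely many points of increase, I would construct real scalars $A_{j,j}$, $B_{j,j}>1$, $B_{j+1,j}>0$ and a Markov function $\varphi_{j+1}$ of the same type verifying the proposition's recursion.

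Set $w:=z_{2j+1}$ and $\psi:=\varphi_j(w)$. Evaluating the recursion at $z=w$ (where the remainder term vanishes) forces $wB_{j,j}-A_{j,j}=1/\psi$, and the companion condition at $z_{2j+2}=\bar w$ follows automatically by reality of $A_{j,j},B_{j,j}$. The identity $\Im\psi=-\Im w\int d\mu_j(t)/|w-t|^2$ shows $\psi\neq 0$, and solving the real $2\times 2$ linear system yields exactly the stated formulas for $B_{j,j}$ and $A_{j,j}$. The Cauchy--Schwarz inequality applied to the probability measure $\mu_j$ gives $|\psi|^2\leq\int d\mu_j(t)/|w-t|^2$, with strict inequality because $1$ and $1/(w-\,\cdot\,)$ are linearly independent in $L^2(\mu_j)$ (as $\mu_j$ has infinitely many points of increase); hence $B_{j,j}>1$, and I set $B_{j+1,j}:=\sqrt{B_{j,j}-1}>0$.

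Next, I would define
\[
\varphi_{j+1}(z):=\frac{zB_{j,j}-A_{j,j}-1/\varphi_j(z)}{B_{j+1,j}^2(z-z_{2j+1})(z-z_{2j+2})}
\]
and verify the four standard features of a Cauchy transform of a probability measure on $[a,b]$. Analyticity on $\mathbb{C}\setminus[a,b]$ holds because $\varphi_j$ has no zeros there (its imaginary part has definite nonzero sign off $\mathbb{R}$, and on $\mathbb{R}\setminus[a,b]$ the integral representation forces $\varphi_j$ to be real of definite sign), so $1/\varphi_j$ is analytic, and the two zeros of the numerator at $w,\bar w$ cancel the denominator's zeros. Conjugation symmetry is immediate from reality of all coefficients. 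The asymptotic $z\varphi_{j+1}(z)\to 1$ follows by expanding $1/\varphi_j(z)=z-m_1+O(1/z)$ with $m_1=\int t\,d\mu_j$: the numerator is $(B_{j,j}-1)z+O(1)$ while the denominator is $(B_{j,j}-1)z^2+O(z)$, thanks to the choice $B_{j+1,j}^2=B_{j,j}-1$.

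The main obstacle is the remaining Herglotz sign condition $\Im z\cdot\Im\varphi_{j+1}(z)\leq 0$ on $\mathbb{C}\setminus\mathbb{R}$, which together with the features above pins down $\varphi_{j+1}$ as the Cauchy transform of a positive Borel measure on $[a,b]$. I would establish it by computing the boundary value: for $t\in(a,b)$ off the point masses of $\mu_j$, the denominator tends to the real positive $|t-w|^2$ (since $\Im w\neq 0$), and
\[
\lim_{\epsilon\to 0^+}\Im\varphi_{j+1}(t+i\epsilon)=\frac{\Im\varphi_j(t+i0)}{|\varphi_j(t+i0)|^2\,B_{j+1,j}^2\,|t-w|^2}\leq 0,
\]
with singular parts of $\mu_j$ handled by the Poisson--Stieltjes refinement of the inversion formula. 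Applying Cauchy's formula to $\varphi_{j+1}$ (analytic off $[a,b]$, symmetric, $O(1/z)$ at infinity) and its boundary jump $2i\Im\varphi_{j+1}(t+i0)$ then recovers $\varphi_{j+1}(z)=\int d\mu_{j+1}(t)/(z-t)$ for a positive Borel measure $\mu_{j+1}$ on $[a,b]$, extending the sign condition from the boundary to all of $\mathbb{C}\setminus\mathbb{R}$. Total mass $1$ follows from the asymptotic, and $\mu_{j+1}$ has infinitely many points of increase since a rational $\varphi_{j+1}$ would force $\varphi=\varphi_0$ rational via the recursion, contradicting the hypothesis. The proposition's displayed identity is then an immediate rearrangement of the definition of $\varphi_{j+1}$, closing the induction.
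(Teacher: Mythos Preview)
The paper does not prove this proposition itself: it cites \cite[Lemma~3.1 and Remark~3.3]{DZ09}, where the result is obtained via Nevanlinna-function theory under the stronger hypothesis $|\Im z_j|\geq\delta>0$, and merely asserts that the extension to \eqref{markov_ass} (which allows real $z_{2j+1}\in\mathbb{R}\setminus[a,b]$) is immediate. Your argument is in the same Nevanlinna/Herglotz spirit and is essentially correct, but as written it carries the same restriction to non-real interpolation points: extracting $A_{j,j},B_{j,j}$ from the complex equation $wB_{j,j}-A_{j,j}=1/\psi$ as a ``real $2\times2$ system'', the claim ``$\Im\psi=-\Im w\int\cdots$ shows $\psi\neq0$'', and the parenthetical ``since $\Im w\neq0$'' all tacitly assume $\Im w\neq0$. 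When $w$ is real one has $z_{2j+1}=z_{2j+2}=w$ and $(z-w)(z-\bar w)=(z-w)^2$; the two interpolation conditions collapse, and the missing second equation is the derivative condition $B_{j,j}=(1/\varphi_j)'(w)$, which still reproduces the stated formula because $\int d\mu_j/|w-t|^2=-\varphi_j'(w)$ for real $w$. Nonvanishing of $\psi$ then follows from sign-definiteness of $1/(w-t)$ on $[a,b]$.

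Your boundary-value route to the Herglotz property can be made rigorous but is more delicate than necessary (one must justify Stieltjes inversion for $\varphi_{j+1}$ and control singular parts before knowing it is a Cauchy transform). A cleaner argument, closer to the Nevanlinna approach of \cite{DZ09} and valid uniformly for real or complex $w$, uses that $1/\varphi_j$ is itself a Nevanlinna function analytic off $[a,b]$, hence admits a representation $1/\varphi_j(z)=z-m_1-\int_a^b d\sigma_j(t)/(z-t)$ with a positive measure $\sigma_j$ on $[a,b]$. A direct partial-fraction computation (using $h(w)=h(\bar w)=0$, respectively $h(w)=h'(w)=0$ in the real case) then gives
\[
\varphi_{j+1}(z)=\int_a^b\frac{1}{z-t}\,\frac{d\sigma_j(t)}{(B_{j,j}-1)\,|w-t|^2},
\]
which exhibits $\mu_{j+1}$ explicitly as a positive measure on $[a,b]$; total mass one is precisely the identity $B_{j,j}-1=\int_a^b d\sigma_j(t)/|w-t|^2$ that drops out of $h(w)=0$.
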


Hence, our Markov function $\varphi$ for the symmetric interpolation
points \eqref{markov_ass} induces a linear tridiagonal pencil
$zB-A$ if we set according to (\ref{linpen}),
\begin{eqnarray*} &&
   \beta_j(z)=zB_{j,j}-A_{j,j}, \quad
   \\&& -\alpha^L_j(z)=zB_{j+1,j}-A_{j+1,j} = B_{j+1,j}(z-z_{2j+1}), \quad
   \\&& -\alpha^R_j(z)=zB_{j,j+1}-A_{j,j+1} =
   B_{j+1,j}(z-\overline{z_{2j+1}})=B_{j,j+1}(z-z_{2j+2}).
\end{eqnarray*}
We collect some elementary properties of this pencil in the
following two propositions.

\begin{proposition}\label{Jacobi_bounded}
Suppose that~\eqref{markov_ass} holds.
Then the above tridiagonal matrices $A,B$ are Hermitian and bounded.
\end{proposition}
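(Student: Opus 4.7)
The result splits into Hermitianity (immediate from the explicit formulas) and boundedness (where uniformity in $j$ is the delicate point).

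For Hermitianity, Proposition~\ref{thiele} gives $A_{j,j},B_{j,j},B_{j+1,j}\in\mathbb R$, and matching coefficients in $-\alpha_j^L(z)=B_{j+1,j}(z-z_{2j+1})$ and $-\alpha_j^R(z)=B_{j+1,j}(z-\overline{z_{2j+1}})$ yields $A_{j+1,j}=B_{j+1,j}z_{2j+1}$, $B_{j,j+1}=B_{j+1,j}$, and $A_{j,j+1}=\overline{A_{j+1,j}}$. Both matrices are therefore tridiagonal Hermitian.

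For boundedness, set $w:=z_{2j+1}$. Since $|A_{j,j}|\le\max(|a|,|b|)\,B_{j,j}$ and $|A_{j+1,j}|=|w|\sqrt{B_{j,j}-1}$, it is enough to bound $B_{j,j}$ and $|w|\sqrt{B_{j,j}-1}$ uniformly in $j$. I would use the variance identity
\[
  B_{j,j}-1=\frac{\int|(w-t)^{-1}-\varphi_j(w)|^2\,d\mu_j(t)}{|\varphi_j(w)|^2},
\]
and split into two regimes. For $|w|\ge M_0$ (a constant depending only on $a,b$), Laurent expansions of $(w-t)^{-1}$ and $|w-t|^{-2}$ give $B_{j,j}-1=\sigma_j^2/|w|^2+O(|w|^{-3})$, where $\sigma_j^2$ is the variance of $t$ under $\mu_j$, bounded by $(b-a)^2/4$; hence $B_{j,j}\to 1$ and $|w|\sqrt{B_{j,j}-1}\le(b-a)/2+o(1)$, uniformly in $j$ since all implicit constants depend only on moments of a probability measure on $[a,b]$.

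For $|w|\le M_0$, the main step is a lower bound on $|\varphi_j(w)|$ uniform in $\mu_j$. The geometric fact is that, whenever $\dist(w,[a,b])\ge\delta$, the arc $\{(w-t)^{-1}:t\in[a,b]\}$ lies in an open half-plane bounded by a line through the origin (since $\Im(w-t)^{-1}$ has constant sign for $\Im w\ne 0$, and $(w-t)^{-1}$ is real of constant sign for real $w$ outside $[a,b]$), and its angular spread---the angle subtended by $[a,b]$ at $w$---is at most $\theta_0:=2\arctan\bigl((b-a)/(2\delta)\bigr)<\pi$. The elementary inequality $|\int f\,d\mu|\ge\cos(\theta_0/2)\int|f|\,d\mu$, valid whenever $\arg f$ lies in an interval of length $\le\theta_0$, applied to $f(t)=(w-t)^{-1}$ with $|f(t)|\ge(M_0+\max(|a|,|b|))^{-1}$, yields $|\varphi_j(w)|\ge c(\delta,a,b,M_0)>0$; together with $\int|w-t|^{-2}d\mu_j\le\delta^{-2}$ this bounds $B_{j,j}$, and hence $|w|\sqrt{B_{j,j}-1}\le M_0\sqrt{B_{j,j}}$ on this compact regime. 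The main obstacle is precisely this uniform-in-$\mu_j$ lower bound on $|\varphi_j(w)|$, which hinges on the sector geometry enforced by the separation $\delta>0$.
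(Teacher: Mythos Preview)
Your argument is correct, and it takes a genuinely different route from the paper for the delicate part, namely the uniform bound on the off-diagonal entries $|A_{j+1,j}|=|z_{2j+1}|\sqrt{B_{j,j}-1}$ when the interpolation points may be unbounded.

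For the lower bound on $|\varphi_j(w)|$, the paper writes down the explicit minorization
\[
   |\varphi_j(z)|\ \ge\ \frac{\dist(z,[a,b])}{\max\{|z-a|^2,|z-b|^2\}},
\]
obtained by estimating the imaginary part of $\varphi_j$ (and the real part when $w$ lies beyond an endpoint). Your sector argument (the image of $t\mapsto (w-t)^{-1}$ lies in a cone of opening $2\arctan((b-a)/(2\delta))<\pi$, hence $|\!\int f\,d\mu|\ge\cos(\theta_0/2)\int|f|\,d\mu$) achieves the same end with a different constant. Both are uniform in the probability measure $\mu_j$, which is the essential point.

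For $|A_{j+1,j}|$, the paper does \emph{not} split into regimes or expand at infinity. Instead it fixes an auxiliary point $z$ at positive distance from both $[a,b]$ and all interpolation points (such a $z$ exists in the strip $0<\dist(z,[a,b])<\delta$), rewrites the recursion of Proposition~\ref{thiele} as
\[
   B_{j+1,j}^2\,|z-z_{2j+1}|^2\,|\varphi_{j+1}(z)|
   \ \le\ |zB_{j,j}-A_{j,j}|+\frac{1}{|\varphi_j(z)|},
\]
and then uses that $|z_{2j+1}|/|z-z_{2j+1}|$ stays bounded together with the minorization for $|\varphi_j(z)|,|\varphi_{j+1}(z)|$. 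Your approach replaces this use of the continued-fraction recursion by the variance identity $B_{j,j}-1=\|(w-\cdot)^{-1}-\varphi_j(w)\|_{L^2(\mu_j)}^2/|\varphi_j(w)|^2$ and a direct estimate $(w-t)^{-1}-\varphi_j(w)=\int\frac{t-s}{(w-t)(w-s)}\,d\mu_j(s)=O(|w|^{-2})$, uniformly in $\mu_j$; this yields $|w|^2(B_{j,j}-1)=O(1)$ for large $|w|$, while for bounded $|w|$ the sector bound on $|\varphi_j(w)|$ controls $B_{j,j}$ directly. The paper's argument is more compact and re-uses the defining recursion; your argument is more intrinsic (it never invokes Proposition~\ref{thiele} beyond the explicit formulas) and has the side benefit of making the asymptotic $B_{j,j}-1\to 0$ as $|z_{2j+1}|\to\infty$ immediately visible, which is precisely what is exploited in Theorem~\ref{spectrum_jacobi}.
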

\begin{proof}
   It follows from \eqref{markov_ass} and the explicit formulas given in
   Proposition~\ref{thiele} that $A$ and $B$ are hermitian,
   and $B$ is real. In order to show that $B$ is bounded, it is sufficient to
   show that its entries are uniformly bounded, where in our case it is sufficient to consider the
   diagonal ones.
Let us first establish the minorization
\begin{equation}\label{markov_min}
\left|\int_a^b \frac{d\mu_j(t)}{z-t} \right|
         \geq \frac{\dist(z,[a,b])}{\max
         \{|z-a|^2,|z-b|^2\}},\quad z\in\dC\setminus[a,b].
\end{equation}
For a proof of~\eqref{markov_min} we suppose that
   $\Re z\geq (a+b)/2$, the other case is similar.
Since $t \mapsto \Im(1/(z-t))$ does not change sign on $[a,b]$, we get
   \[
      \left| \Im \int_a^b \frac{d\mu_j(t)}{z-t} \right|
        = \int_a^b \left| \Im \frac{1}{z-t} \right| d\mu_j(t)
        \geq \frac{|\Im z|}{|z-a|^2} .
   \]
Hence our claim~\eqref{markov_min} follows provided that $|\Im z|=
   \dist(z,[a,b])$. Otherwise, we have that $\Re(z-t)
   \geq \Re(z-b) >0$ for all $t\in [a,b]$, and hence
   \[
      \left| \Re\varphi_j(z) \right|
        \geq
        \frac{\Re(z-b)}{|z-a|^2} ,
   \]
   and the claim follows by observing that $|z-b|=\dist(z,[a,b])$.

   Combining~\eqref{markov_min} with the definition of $B_{j,j}$ given
   in Proposition~\ref{thiele} we conclude that
   \[
   B_{j,j}\le\frac{\max(|z_{2j+1}-a|^4,|z_{2j+1}-b|^4)}{\dist(z_{2j+1},[a,b])^4}
   \]
   the right-hand side being bounded according to assumption~\eqref{markov_ass}.
   Thus $B$ is bounded.

   Similarly, one shows that the diagonal entries $A_{j,j}$ of $A$ are uniformly bounded.
   In order to discuss the off-diagonal entries of $A$, we choose a fixed point
   $z\in\dC\setminus[a,b]$ having a positive distance from the set of the interpolation points
   $z_j$, and get with the help of Proposition~\ref{thiele}
\begin{multline*}
   |A_{j+1,j}|^2=|A_{j,j+1}|^2=|z_{2j+1}|^2B_{j+1,j}^2\\
\le
   \frac{1}{\varphi_{j+1}(z)}\frac{|z_{2j+1}|^2}{|z-z_{2j+1}|^2}
\left(|zB_{j,j}-A_{j,j}|+\frac{1}{|\varphi_j(z)|}\right)
\end{multline*}
the right-hand side being bounded uniformly for $j\ge 0$ according to~\eqref{markov_min}.
Hence, $A$ is also bounded.
   \end{proof}

\begin{proposition}\label{num_Jacobi}
   Suppose that~\eqref{markov_ass} holds.
Then for all $y=(y_0,y_{1},...)^{\top}\in \ell^2$ there holds
   \begin{equation} \label{minoration}
       \langle By,y \rangle \geq |y_k|^2 \quad \mbox{if
       $y_0=...=y_{k-1}=0$.}
   \end{equation}
   Furthermore, for the numerical range of
   Definition~\ref{num_ran} there holds $W(A,B)\subset [a,b]$.
\end{proposition}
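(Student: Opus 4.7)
The plan is to prove both claims by explicit sum-of-squares representations. The common structural input is the identity $B_{j,j}=1+B_{j+1,j}^2$ from Proposition~\ref{thiele}, which lets us factor $B$; an analogous identity for $\beta_j(z)$ will let us control $zB-A$ on the real axis.

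For the bound \eqref{minoration}, I would expand $\langle By,y\rangle$ into its diagonal and cross terms and complete the square, arriving (via $\sum_{j\geq 0}|y_{j+1}|^2=\|y\|^2-|y_0|^2$) at
\[
  \langle By,y\rangle \;=\; |y_0|^2 + \sum_{j\geq 0}\bigl|B_{j+1,j}y_j+y_{j+1}\bigr|^2.
\]
This is nothing but the factorization $B=UU^*$ with $U$ upper bidiagonal, $U_{j,j}=1$, $U_{j,j+1}=B_{j+1,j}$. If $y_0=\dots=y_{k-1}=0$ with $k\geq 1$, all summands with $j<k-1$ vanish and the $j=k-1$ summand equals exactly $|y_k|^2$, giving \eqref{minoration}. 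In particular $\langle By,y\rangle>0$ for every $y\neq 0$.

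For $W(A,B)\subset[a,b]$ I would split by $\Im z$. If $\Im z\neq 0$, then since $A,B$ are Hermitian by Proposition~\ref{Jacobi_bounded}, one has $\Im\langle(zB-A)y,y\rangle=(\Im z)\langle By,y\rangle\neq 0$ for $y\neq 0$, so $z\notin W(A,B)$. If instead $z\in\mathbb R\setminus[a,b]$, the continued fraction recursion of Proposition~\ref{thiele} combined with $(z-z_{2j+1})(z-z_{2j+2})=|z-z_{2j+1}|^2$ (valid on $\mathbb R$ by \eqref{markov_ass}) yields $\beta_j(z)=\varphi_j(z)^{-1}+|\gamma_j(z)|^2\varphi_{j+1}(z)$, where $\gamma_j(z):=-\alpha_j^L(z)=B_{j+1,j}(z-z_{2j+1})$ and $\alpha_j^R(z)=\overline{\gamma_j(z)}$ on the real axis. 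Substituting this into $\langle(zB-A)y,y\rangle$, completing the square (with scaling $\sqrt{\varphi_{j+1}(z)}$, taken real for $z>b$ and imaginary for $z<a$), and telescoping the diagonal $\varphi_j(z)^{-1}|y_j|^2$ contributions would give
\[
  \langle(zB-A)y,y\rangle \;=\; \frac{|y_0|^2}{\varphi(z)} + \varepsilon(z)\sum_{j\geq 0}\Bigl|\sqrt{\varphi_{j+1}(z)}\,\gamma_j(z)\,y_j+\frac{y_{j+1}}{\sqrt{\varphi_{j+1}(z)}}\Bigr|^2,
\]
with $\varepsilon(z)=+1$ for $z>b$ and $\varepsilon(z)=-1$ for $z<a$. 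In both cases both terms on the right share the sign of $\varphi(z)$ (positive for $z>b$, negative for $z<a$), so the form is of strict definite sign on $y\neq 0$: vanishing would force $y_0=0$ and then, recursively, $y_{j+1}=-\varphi_{j+1}(z)\gamma_j(z)y_j=0$, hence $y\equiv 0$. This yields $z\notin W(A,B)$ and proves $W(A,B)\subset[a,b]$.

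The main technical obstacle is spotting the correct completion of the square---specifically, the scaling $\sqrt{\varphi_{j+1}(z)}$ that balances the off-diagonal cross terms and allows the diagonal contributions $\varphi_j(z)^{-1}|y_j|^2$ to telescope into the single boundary term $|y_0|^2/\varphi(z)$. Implicitly this corresponds to the $UL$-factorization of Section~\ref{sec_4} restricted to real $z\notin[a,b]$, where every pivot has definite sign; but producing the sum-of-squares identity by hand is arguably cleaner than first verifying $UL$-positivity.
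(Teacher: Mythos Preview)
Your proof of \eqref{minoration} is essentially the same as the paper's: both expand $\langle By,y\rangle$ and complete the square using $B_{j,j}=1+B_{j+1,j}^2$. The paper is marginally more careful in working with truncations $y_{[0:n]}$ and passing to the limit, but that is routine.

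Your argument for $W(A,B)\subset[a,b]$, however, is genuinely different from the paper's and worth recording. The paper reduces to finite sections: it shows $W(A,B)$ is contained in the closure of $\bigcup_n W(A_{[0:n]},B_{[0:n]})$, rewrites each $W(A_{[0:n]},B_{[0:n]})=\Theta(B_{[0:n]}^{-1/2}A_{[0:n]}B_{[0:n]}^{-1/2})$ as the convex hull of the zeros of $q_{n+1}$, and then cites an external result (Stahl--Totik) that the poles of multipoint Pad\'e approximants of a Markov function lie in $(a,b)$. You instead give a direct, self-contained argument: the case $\Im z\neq 0$ is handled by hermiticity and strict positivity of $B$, while for real $z\notin[a,b]$ you exploit the identity $\beta_j(z)=\varphi_j(z)^{-1}+|\gamma_j(z)|^2\varphi_{j+1}(z)$ coming from Proposition~\ref{thiele} to exhibit $\langle(zB-A)y,y\rangle$ explicitly as a definite quadratic form. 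This is effectively the $UL$-factorization of \S\ref{sec_4} with pivots $d_j(z)=1/\varphi_j(z)$, all of one sign for real $z\notin[a,b]$. Your route avoids the external citation and gives a quantitative lower bound $|\langle(zB-A)y,y\rangle|\geq |y_0|^2/|\varphi(z)|$ as a bonus; the paper's route, on the other hand, makes transparent the link to the zero location of $q_{n+1}$, which is of independent interest. Two small points: there is a harmless sign slip (it is $-\alpha_j^R(z)=\overline{\gamma_j(z)}$ on the real axis, not $\alpha_j^R$), and when truncating you pick up a boundary term $\varphi_{n+1}(z)|\gamma_n(z)|^2|y_n|^2$ which has the correct sign and vanishes in the limit since $|\gamma_j(z)|^2\varphi_{j+1}(z)=\beta_j(z)-\varphi_j(z)^{-1}$ is uniformly bounded.
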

\begin{proof}
   In order to show \eqref{minoration},
   let $y=(y_0,y_1,y_2,...)^{\top}\in \mathbb \ell^2$.
   We write as before $y_{[0:n]}=(y_0,y_1,...,y_n,0,0,...)^{\top}\in \ell^2$,
   and notice that $\langle By_{[0:n]},y_{[0:n]} \rangle \to \langle By,y
   \rangle$ for $n \to \infty$ since $B$ is bounded by
   Proposition~\ref{Jacobi_bounded}. Then for $y_0=...=y_{k-1}=0$
   and $n \geq k$ using the relation $B_{j,j}=1+B_{j+1,j}^2$ we
   get
   \begin{eqnarray*}
       \langle By_{[0:n]},y_{[0:n]} \rangle
       &=& \sum_{j=k}^n B_{j,j} |y_j|^2 + 2 \sum_{j=k}^{n-1}B_{j+1,j}
       \Re(y_{j}\overline{y_{j+1}})
       \\&=& |y_k|^2 + \sum_{j=k}^{n-1} | B_{j+1,j} y_j + y_{j+1} |^2
+B_{n+1,n}^2|y_n|^2
\geq
          |y_k|^2,
   \end{eqnarray*}
   implying that \eqref{minoration} holds. Since also
   $\langle Ay_{[0:n]},y_{[0:n]} \rangle \to \langle Ay,y
   \rangle$ for $n \to \infty$, we get using~\eqref{minoration}
   and~\eqref{num_ran2}
   that $W(A,B)$ is included in the closure of the union of the numerical ranges
   $W(A_{[0:n]},B_{[0:n]})$ of all finite sections.
   Further, observe that
\[
W(A_{[0:n]},B_{[0:n]})=\Theta(B_{[0:n]}^{-\frac{1}{2}}A_{[0:n]}B_{[0:n]}^{-\frac{1}{2}}),
\]
where $B_{[0:n]}^{-\frac{1}{2}}A_{[0:n]}B_{[0:n]}^{-\frac{1}{2}}$ is self-adjoint. Thus,
$W(A_{[0:n]},B_{[0:n]})$ is a convex hull of eigenvalues of the matrix
$B_{[0:n]}^{-\frac{1}{2}}A_{[0:n]}B_{[0:n]}^{-\frac{1}{2}}$ or, equivalently, the zeros
of the polynomial
\[
\det(z-B_{[0:n]}^{-\frac{1}{2}}A_{[0:n]}B_{[0:n]}^{-\frac{1}{2}})
=\det B_{[0:n]}^{-1}\det(zB_{[0:n]}-A_{[0:n]})
=\det B_{[0:n]}^{-1}q_{n+1}(z).
\]
To complete the proof it remains to note that
all $n+1$ roots of $q_{n+1}$, that is, the poles of a rational interpolant of a
Markov function are lying in the open interval $(a,b)$, see \cite[Lemma~6.1.2]{STO}.

 \end{proof}

The positive definiteness of finite sections of $B$ also for not
necessarily bounded $[a,b]$ has been shown already in
\cite[Proposition~4.2]{DZ09}, where the authors also establish
\eqref{minoration}.

Notice that property \eqref{minoration} in general does not imply
that condition \eqref{NR0} is true. However, only the latter
condition allows us to conclude that the spectrum of the pencil
$zB-A$ is included in $[a,b]$. There is a special case where we
may say more.

\begin{theorem}\label{spectrum_jacobi}
   Beside \eqref{markov_ass}, suppose in addition that
   $z_{2j+1}=\overline{z_{2j+2}}\to \infty$ as $j\to \infty$.
   Then the operator $B$ is a compact perturbation of the
   identity, and
   condition \eqref{NR0} holds.

   In particular, the spectrum of $zB-A$ is given by the support
   of the measure $\mu$,
   and, outside the spectrum, $\varphi$ coincides
   with the $m$--function of the linear pencil $zB-A$.
\end{theorem}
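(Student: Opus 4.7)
The plan is to verify the four claims of the theorem in sequence: compactness of $B-I$, condition \eqref{NR0}, the identity $m\equiv\varphi$ off $[a,b]$, and finally $\sigma(A,B)=\mbox{supp}(\mu)$.

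I would start from the formula for $B_{j,j}$ in Proposition~\ref{thiele}. Writing $z:=z_{2j+1}$, since $\mu_j$ is a probability measure on the fixed compact interval $[a,b]$ its moments are uniformly bounded and, for $|z|$ large,
\[
   \int_a^b\frac{d\mu_j(t)}{z-t}=\frac{1}{z}\bigl(1+O(1/|z|)\bigr),\qquad
   \int_a^b\frac{d\mu_j(t)}{|z-t|^2}=\frac{1}{|z|^2}\bigl(1+O(1/|z|)\bigr),
\]
with $O$-constants depending only on $[a,b]$. This gives $B_{j,j}=1+O(1/|z_{2j+1}|)\to 1$ and $B_{j+1,j}=\sqrt{B_{j,j}-1}\to 0$ as $j\to\infty$, so the tridiagonal operator $B-I$, having all three of its diagonals vanishing at infinity, is an operator-norm limit of its finite sections, hence compact. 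For \eqref{NR0}, $B$ is self-adjoint by Proposition~\ref{Jacobi_bounded}, and passing to the limit in the telescoping identity from the proof of Proposition~\ref{num_Jacobi} (the boundary term $B_{n+1,n}^2|y_n|^2$ vanishes thanks to the above compactness and $y\in\ell^2$) yields
\[
   \langle By,y\rangle=|y_0|^2+\sum_{j\ge 0}\bigl|B_{j+1,j}y_j+y_{j+1}\bigr|^2,\qquad y\in\ell^2.
\]
If the right-hand side vanishes then $y_0=0$ and the recursion $y_{j+1}=-B_{j+1,j}y_j$ forces $y\equiv 0$, so $B>0$. Being a positive self-adjoint compact perturbation of the identity, $B$ has essential spectrum $\{1\}$ with possibly a sequence of positive eigenvalues accumulating only at $1$; hence $\inf\sigma(B)>0$ and, by self-adjointness, $\overline{\Theta(B)}\subset(0,\infty)$, i.e.\ \eqref{NR0}.

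Next, Proposition~\ref{num_Jacobi} combined with~\eqref{NR0} gives $\sigma(A,B)\subset\overline{W(A,B)}\subset[a,b]$, and Theorem~\ref{un_loc_con} yields locally uniform convergence of the convergents $p_n/q_n$ of \eqref{CF} to $m$ on $\dC\setminus[a,b]$. By the construction of the pencil via Proposition~\ref{thiele}, these are the multipoint Pad\'e approximants of $\varphi$ at the conjugate nodes $z_1,\dots,z_{2n}$, and the Markov-type convergence result \cite[Theorem~6.2]{DZ09} tells us that $p_n/q_n\to\varphi$ locally uniformly on $\dC\setminus[a,b]$; hence $m\equiv\varphi$ there. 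Setting finally $\widetilde A:=B^{-1/2}AB^{-1/2}$ and $\widetilde e_0:=B^{-1/2}e_0$, one has $\sigma(A,B)=\sigma(\widetilde A)$ (since $B$ is boundedly invertible and positive) and, by the spectral theorem, $m(z)=\int d\nu(t)/(z-t)$ for the spectral measure $\nu$ of the self-adjoint operator $\widetilde A$ at $\widetilde e_0$. Uniqueness of Cauchy transforms forces $\nu=\mu$, and cyclicity of $\widetilde e_0$ for $\widetilde A$ (a standard consequence of the irreducible tridiagonal structure, guaranteed here by $B_{j+1,j}>0$) yields $\sigma(A,B)=\mbox{supp}(\nu)=\mbox{supp}(\mu)$.

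The delicate points are (a) the uniformity of the asymptotic $B_{j,j}=1+O(1/|z_{2j+1}|)$ across all $\mu_j$, which rests on compactness of $[a,b]$ to give bounds independent of $j$, and (b) the Markov-type convergence of multipoint Pad\'e approximants, available in the cited literature but not entirely elementary; the remaining operator-theoretic ingredient, cyclicity of $\widetilde e_0$, is the standard observation for irreducible tridiagonal pencils once $B_{j+1,j}\neq 0$ is in hand.
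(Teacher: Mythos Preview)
Your argument is essentially correct but differs from the paper's in two substantive places, and one of your ``standard'' steps is under-justified.

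\medskip

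\textbf{The identity $m=\varphi$.} You obtain this by sandwiching: $p_n/q_n\to m$ via Theorem~\ref{un_loc_con}, and $p_n/q_n\to\varphi$ via the Markov convergence theorem from \cite{DZ09} (or \cite{GL}). This works, but note that the hypotheses in \cite{DZ09} are stated for $|\Im z_j|\ge\delta$, so strictly speaking you should invoke \cite{GL} instead. The paper's argument is lighter and self-contained: since $p_n/q_n$ interpolates both $m$ (Corollary~\ref{interpolants}) and $\varphi$ (by construction) at $z_{2n}$, and the $z_{2n}$ accumulate at $\infty\in\rho(A,B)$, the identity theorem gives $m\equiv\varphi$ on $\dC\setminus[a,b]$ immediately.

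\medskip

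\textbf{The inclusion $\sigma(A,B)\subset\mbox{supp}(\mu)$.} Your route via the spectral theorem for $\tilde A=B^{-1/2}AB^{-1/2}$ and cyclicity of $\tilde e_0$ is conceptually clean, but the cyclicity step is not the one-liner you suggest. The operator $\tilde A$ is \emph{not} tridiagonal ($B^{-1/2}$ is a full matrix), so the classical ``irreducible Jacobi matrix'' argument does not apply directly, and $B_{j+1,j}>0$ alone does not settle it. One way to repair this is to use the factorization $B=UU^*$ with $U$ upper bidiagonal (from Remark~\ref{ULB}): then $V:=U^{-1}B^{1/2}$ is unitary, $\hat A:=V\tilde A V^*=U^{-1}AU^{*-1}$ is genuinely Hermitian tridiagonal, and $V\tilde e_0=e_0$; one then checks the subdiagonal of $\hat A$ is nonvanishing (or argues via the resolvent, using that each $\alpha_k^R$ has its zero $z_{2k+2}$ inside $\rho(A,B)$). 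So your approach ultimately needs the same $UL$ factorization the paper employs. The paper avoids cyclicity altogether: it writes $R(z)_{j,k}=\int q_j^R q_k^L\,d\mu/(z-t)$ for $z\notin[a,b]$, then bounds $U^*R(z)U$ directly via the orthonormality of the Geronimus-type functions $\cQ_n^R(\infty,\cdot)$ in $L^2(\mu)$ (Proposition~\ref{UL_poly}), obtaining $\|U^*R(z)U\|\le\dist(z,\mbox{supp}\,\mu)^{-1}$ and hence $z\in\rho(A,B)$ for every $z\notin\mbox{supp}(\mu)$.

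\medskip

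In short: your compactness and \eqref{NR0} arguments are fine variants of the paper's; your $m=\varphi$ step trades elegance for external input; and your spectral-theorem route to $\sigma(A,B)=\mbox{supp}(\mu)$ is valid but the cyclicity claim needs a real proof, which in the end leans on the same $UL$ structure the paper exploits more directly.
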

\begin{proof}
   We have shown in the proof of Proposition~\ref{Jacobi_bounded}
   that $|A_{j+1,j}|^2=|z_{2j+1}|^2 (B_{j,j}-1) = |z_{2j+1}|^2 B_{j+1,j}^2$
   is bounded for $j \to \infty$, and hence
   \[
        \lim_{j\to \infty} B_{j+1,j} =\lim_{j\to \infty} B_{j,j+1}
        =0, \quad
        \lim_{j\to \infty} B_{j,j} = 1,
   \]
   showing that $B$ is a compact perturbation of the identity,
   and $B$ has its numerical range included in $[0,+\infty)$
   by \eqref{minoration}. Hence, if
   \eqref{NR0} does not hold, then $0$ would be an eigenvalue of
   $B$, with corresponding eigenvector $y\in \mathbb \ell^2, y
   \neq 0$. Inserting this $y$ into \eqref{minoration} with $k$
   such that $y_k \neq 0$ gives a contradiction.

   It follows from the text after \eqref{NR0} together with
   Proposition~\ref{num_Jacobi} that the spectrum $\sigma(A,B)$ of
   the linear pencil $zB-A$ is included in $[a,b]$.
   Also, by construction and Corollary~\ref{interpolants},
   $p_n/q_n$ interpolates both $\varphi$ and $m$ in $z_{2n}$,
   implying that these functions are equal for $z=z_{2n}$ and for
   all $n$, and analytic in $\mathbb C \setminus [a,b]$ including $\infty$.
   Since these points accumulate at $\infty$, we
   conclude that $m=\varphi$ outside $[a,b]$. Finally, the
   inclusion $\mbox{supp}(\mu)\subset \sigma(A,B)$ follows from the
   fact that
   $\varphi$ is not analytic in any domain containing points of
   the support of $\mu$.

   Given $z\in \mathbb C \setminus [a,b]$, by choosing a contour
   $\Gamma$ surrounding $[a,b]$ but not the interpolation points
   $z_j$ nor $z$ we get from Lemma~\ref{integral} the formula
   \[
       \langle (zB-A)^{-1} e_k,e_j \rangle
       = \frac{1}{2\pi i} \int_\Gamma q_j^R(\zeta) q_k^L(\zeta)
       \frac{m(\zeta)}{z-\zeta} d\zeta
       = \int_a^b q_j^R(t) q_k^L(t)
       \frac{d\mu(t)}{z-t} ,
   \]
   where for the second identity we have used the Fubini theorem and the fact
   that $\varphi=m$ on $\Gamma$.
   We denote by $R(z)$ the infinite matrix with entries
   $R(z)_{j,k}$ given by the above right-hand integral, which is
   clearly well defined for any $z$ outside the support of $\mu$.
   From Lemma~\ref{GreenPr} we know that $R(z)$ is a formal left
   and right inverse of $(zB-A)$, and the desired conclusion
   $z\not\in \sigma(A,B)$ follows as in the proof of
   Theorem~\ref{thChar} by showing that $R(z)$ is bounded.

   For this last step, we consider the $UL$
   decomposition of $B$ discussed in Remark~\ref{ULB} and
   in~\cite[Theorem~2.2]{DZ09}:
   let $U$ be an upper bidiagonal matrix with ones on the diagonal, and
   the quantities $B_{j+1,j}$ on the  main upper diagonal, then
   $U$ represents a bounded operator on $\ell^2$ according to
   Proposition~\ref{Jacobi_bounded}. Moreover, we have that $B=U
   U^*$, and, with $B$, also $U$ has a bounded inverse. Hence it
   will be sufficient to show that
   \begin{equation} \label{boundedR}
       | \langle U^* R(z) U y,y\rangle | \leq \frac{\langle y,y
       \rangle}{\dist(z,\mbox{supp}(\mu))}
   \end{equation}
   for all $y=(y_0,y_1,...,y_n,0,0,...)^\top \in \ell^2$ and for
   all $n$. Comparing with Proposition~\ref{UL_poly} we find that
   \[
         \langle U^* R(z) U e_k,e_j\rangle
         = \int_a^b \cQ_j^R(\infty,t) \cQ_k^L(\infty,t)
       \frac{d\mu(t)}{z-t}
   \]
   where
   \[
            \cQ_n^L(\infty,x)=q_n^L(x) - B_{n,n-1} q_{n-1}^L(x)
            , \,
            \cQ_n^R(\infty,x)=q_n^R(x) - B_{n,n-1}  q_{n-1}^R(x),
   \]
   and $\cQ_0^L(\infty,x)=\cQ_0^R(\infty,x)=1$, and finally
   \[
          \frac{1}{2\pi i} \int_\Gamma \cQ_j^R(\infty,\zeta) \cQ_k^L(\infty,\zeta)
       m(\zeta) d\zeta
       = \int_a^b \cQ_j^R(\infty,t) \cQ_k^L(\infty,t)\, d\mu(t)
       = \delta_{j,k} .
   \]
   In addition, since $q_n$ has real coefficients, it also follows from
   \eqref{markov_ass} that, for $t\in \mathbb R$,
   \[
        q_j^R(t)=\overline{q_j^L(t)}, \quad
        \cQ_j^R(\infty,t)=\overline{\cQ_j^L(\infty,t)},
   \]
   implying that
   \begin{eqnarray*}&&
      | \langle U^* R(z) U y,y\rangle | = \left|
          \int_a^b \left| \sum_{j=0}^n y_j \cQ_j^L(\infty,t)
      \right|^2 \frac{d\mu(t)}{z-t} \right|
      \\&& \leq  \frac{1}{\dist(z,\mbox{supp}(\mu))}
          \int_a^b \left| \sum_{j=0}^n y_j \cQ_j^L(\infty,t)
      \right|^2 d\mu(t) = \frac{\langle y,y
       \rangle}{\dist(z,\mbox{supp}(\mu))},
   \end{eqnarray*}
   as claimed in \eqref{boundedR}.
\end{proof}

\begin{remark}
The assumption $z_{2j+1}=\overline{z_{2j+2}}\to \infty$ as $j\to \infty$
is very restrictive and can be relaxed. For instance, if
\[
\limsup_{j\to\infty}\frac{\max(|z_{2j+1}-a|,|z_{2j+1}-b|)}{\dist(z_{2j+1},[a,b])}
<\sqrt[4]{2},
\]
then it follows from the proof of Proposition~\ref{Jacobi_bounded}
that $\sup\limits_{j} B_{j+1,j}<1$. As a consequence, the operator
$U$ from the proof of Theorem~\ref{spectrum_jacobi} and thus
$B$ is a boundedly invertible operator.
This implies that~\eqref{NR0} and hence the second part of
the statement of Theorem~\ref{spectrum_jacobi} is still true.
\qed
\end{remark}

In the setting of Theorem~\ref{spectrum_jacobi}, we may therefore
apply our findings of Theorem~\ref{un_loc_con},
Theorem~\ref{neighborhoods}, or Theorem~\ref{capacity} in order to
study the convergence of the multipoint Pad\'e approximants
towards the Markov function $\varphi$, compare with~\cite[Theorem 6.2]{DZ09}.

Finally, returning to the discussion of Remark~\ref{scaling}
concerning the degrees of freedom of representing multipoint
Pad\'e approximants via linear pencils, it is not difficult to see
that the two linear pencils $zB-A$ and $\Delta D (zB-A) D^{-1}
\Delta$ for diagonal $D,\Delta$ with non-zero diagonal entries
generate the same continued fraction \eqref{CF}. Notice that the
matrix $D$ does not affect the diagonal entries and can be
therefore be considered as to be a balancing factor for the
offdiagonal entries, whereas $\Delta$ allows to scale the entries.
In terms of the  continued fraction \eqref{CF}, a scaling
corresponds to considering an equivalence transformation of
(\ref{CF}), and different normalizations can be found in the
literature concerning the special cases of $J$-fractions,
$T$-fractions or Thiele continued fractions. A balancing, however,
leaves invariant the continued fraction (\ref{CF}) and just
addresses the question how to factorize the products
$\alpha_j^L\alpha_j^R$.

It is always possible to choose a scaling such that the resulting
matrices $A$, $B$ become bounded. However, such a scaling might
produce a matrix $B$ having no longer a bounded inverse, or
satisfying no longer the condition \eqref{NR0}. We also know from
\cite[Theorem~2.3]{BK97} that, for fixed $z$, the balancing which
is best for obtaining $z\in \rho(A,B)$ is the one which makes
$zB-A$ to be complex symmetric (i.e., a complex Jacobi matrix). In
the special case of Theorem~\ref{spectrum_jacobi}, we have chosen
a balancing factor to make $B$ real symmetric, and a scaling such
that $A,B$ are bounded and $B$ has a bounded inverse.

A study of best scaling or balancing for general linear pencils is
beyond the scope of this paper. For future research it might be
interesting to consider a (formal) factorization
$z_0B-A=M_1(z_0)M_2(z_0)$ for some fixed $z_0$ (as done in \S~4)
and to discuss the convergence of multi-point approximants in
terms of spectral properties of $z\mapsto M_1(z_0)^{-1} (zB-A)
M_2(z_0)^{-1}$, since this latter quantity does not depend on
scaling or balancing (but depends on how to choose the factors
$M_j(z_0)$).


\begin{thebibliography}}
\newcommand{\eb}{\end{thebibliography}

\begin{thebibliography}{99}
\bibitem{A}
       N.I. Akhiezer, \textit{The classical moment problem}, Oliver and Boyd,
       Edinburgh, 1965.
\bibitem{AM}
       A.\ Ambroladze, On exceptional sets of asymptotic relations
       for general orthogonal polynomials,
    {\em J.\ Approx.\ Theory}  {\bf 82} (1995) 257-273.
\bibitem{AptKV}
     A.I. Aptekarev, V.A. Kaliaguine, W. Van Assche,
     Criterion for the resolvent set of nonsymmetric tridiagonal
     operators, {\em Proc. Amer. Math. Soc.} {\bf 123} (1995) 2423-2430.
\bibitem{BGM} G.A. Baker, P. Graves-Morris, {\it Pad\'e approximants.
Parts I and II}, Encyclopedia of Mathematics and its Applications,
{13, 14}. Addison-Wesley Publishing Co., Reading, Mass., 1981.
\bibitem{BKLO} L. Baratchart, S.
Kupin, V. Lunot, M. Olivi, Multipoint Schur algorithm and
Orthogonal Rational Functions: convergence properties. Submitted
(2009), arXiv:0812.2050.
\bibitem{Lopez}
D. Barrios Rolania, G. Lopez Lagomasino, Asymptotic behavior of solutions of general three term
recurrence relations, Advances in Computational Mathematics {\bf 26} (2007) 9–37.
\bibitem{BC04}
   B.\ Beckermann, M.\ Castro,
   On the determinacy of complex Jacobi matrices,
    {\em Math. Scand.} {\bf 95}  (2004),  285-298.
\bibitem{BO03}
   B.\ Beckermann, A.\ Osipov,
   Some spectral properties of infinite band matrices,
     {\em Numerical Algorithms} {\bf 34} (2003) 173-185.
\bibitem{B01}
    B.\ Beckermann,
    Complex Jacobi matrices,
    {\em J.\ Comput.\ Appl.\ Math.}  {\bf 127} (2001) 17-65.
\bibitem{B00} B.\ Beckermann,
    On the classification of the spectrum of second order difference operators,
    {\em Math.\ Nachr.} {\bf 216} (2000), 45-59.
\bibitem{B99}
    B.\ Beckermann,
    On the convergence of bounded J--fractions on the resolvent set of the
    corresponding second order difference operator,
    {\em J.\ Approx.\ Theory}  {\bf 99} (1999) 369-408.
\bibitem{BK97}
    B. Beckermann, V. Kaliaguine,
    The diagonal of the Pad\'e table and the approximation of
    the Weyl function of second order difference operators,
    {\em Constructive Approximation} {\bf 13} (1997) 481-510.
\bibitem{BM04}
    M.I.~Bueno, F.~Marcell\'an, Darboux transformation and perturbation
        of linear functionals,
        {\em Linear Algebra Appl.} {\bf 384} (2004) 215-242.
\bibitem{Bulth} A. Bultheel, P. Gonzalez-Vera, E. Hendriksen, O.
Njastad, {\it Orthogonal Rational Functions}, Cambridge University
Press, 1999.
\bibitem{demo} S.\ Demko, W.\ F.\ Moss, P.\ W.\ Smith,
  Decay rates for inverses of band matrices,
  {\em Math.\ Comp.} {\bf 43} (1984) 491-499.
\bibitem{D09}
    M.~Derevyagin, Generalized Jacobi operators in Krein spaces,
    {\em J.\ Math.\ Anal.\ Appl.}  {\bf 349} (2009) 568-582.
\bibitem{DD}
M. Derevyagin, V. Derkach, On the convergence of Pad\'e
approximations for generalized Nevanlinna functions, {\em Trans.
Moscow Math. Soc.} {\bf 68} (2007) 119-162.
\bibitem{DZ09}
    M.S.~Derevyagin, A.S.~Zhedanov, An operator approach to
    multipoint Pad\'e approximations,
    {\em J.\ Approx.\ Theory}  {\bf 157} (2009) 70-88.
\bibitem{Gon82}
         A.A.~Gonchar,
         On uniform convergence of diagonal Pad\'e approximants,
         {\em Math. USSR Sb.} {\bf 46} (1983) 539-559.
\bibitem{GL}
         A. Gonchar and G. Lopez, On Markov's theorem for
         multipoint approximants for functions of Stieltjes type,
         {\em Math. USSR Sb.} {\bf 105} (1978) 512-524.
         English translation in
         {\em Math. USSR Sb.} {\bf 34} (1978) 449-459.
\bibitem{HN1} E.\ Hendriksen and O.\ Njastad,
         A Favard Theorem for Rational Functions,
         {\em J.\ Math.\ Anal.\ Applics.} {\bf 142} (1989)
         508-520.
\bibitem{IM95}
         M.E.H. Ismail and D.R. Masson,
         Generalized orthogonality
         and continued fractions,
         {\em J.\ Approx.\ Theory}  {\bf 83} (1995)  1-40.
\bibitem{JT} W.B. Jones and W.J. Thron,
    {\it Continued fractions.
      Analytic theory and applications}. Encyclopedia of Mathematics and
     its Applications, 11. Addison-Wesley Publishing Co., Reading,
      Mass., 1980.
\bibitem{LW92}
         L.~Lorentzen, H.~Waadeland, Continued fractions with applications, North Holland, Amsterdam, 1992.
\bibitem{Mar}
         A.S.~Markus, Introduction to the spectral theory of polynomial operator pencils,
         Translations of Mathematical Monographs, AMS, 1988.
\bibitem{NikSor}
        E. M. Nikishin, V. N. Sorokin, Rational Approximations and Orthogonality,
        Translations of Mathematical Monographs 92, AMS,
        Providence, R.I., 1991.
\bibitem{S98}
        B. Simon, The classical moment problem as a self-adjoint
        finite difference operator, {\em Adv. in Math.} {\bf 137}
        (1998) 82-203.
\bibitem{simon} B.~Simon, Orthogonal polynomials on the unit
        circle, Part 1, AMS, Providence, RI, 2005.
\bibitem{SZ} V.Spiridonov, A.Zhedanov,  To the theory of
biorthogonal rational functions, RIMS Kokyuroku {\bf 1302} (2003),
172--192, http://repository.kulib.kyoto-u.ac.jp/dspace/handle/2433/42746

\bibitem{STO} H. Stahl and V. Totik, General orthogonal polynomials,
Cambridge University Press, 1992.
\bibitem{Sz} G. Szeg\H{o}, Orthogonal Polynomials, fourth edition,
        AMS, 1975.
\bibitem{T} E. Titchmarsh, The theory of functions, second
         edition, Oxford Science Publications, 1939.
\bibitem{wall} H.\ S.\ Wall, Analytic Theory of Continued Fractions,
  Chelsea, Bronx NY, 1973.
\bibitem{Zhe_BRF} A.~Zhedanov,
         Biorthogonal rational functions and the generalized
         eigenvalue problem,
         {\em J.\ Approx.\ Theory}  {\bf 101} (1999) 303-329.
 \end{thebibliography}
\end{document}